\documentclass[openany]{amsart}
\usepackage{amssymb, amsfonts, lacromay}
\usepackage{mathrsfs,comment}
\usepackage[usenames,dvipsnames]{color}
\usepackage[normalem]{ulem}
\usepackage{url}

\usepackage[all,arc,2cell]{xy}
\UseAllTwocells
\usepackage{enumerate}

\usepackage{hyperref}  
\hypersetup{%
  bookmarksnumbered=true,%
  bookmarks=true,%
  colorlinks=true,%
  linkcolor=blue,%
  citecolor=blue,%
  filecolor=blue,%
  menucolor=blue,%
  pagecolor=blue,%
  urlcolor=blue,%
  pdfnewwindow=true,%
  pdfstartview=FitBH}

%
%
%
\def\makeautorefname#1#2{\expandafter\def\csname#1autorefname\endcsname{#2}}
%
%
\def\equationautorefname~#1\null{(#1)\null}
\makeautorefname{footnote}{footnote}%
\makeautorefname{item}{item}%
\makeautorefname{figure}{Figure}%
\makeautorefname{table}{Table}%
\makeautorefname{part}{Part}%
\makeautorefname{appendix}{Appendix}%
\makeautorefname{chapter}{Chapter}%
\makeautorefname{section}{Section}%
\makeautorefname{subsection}{Section}%
\makeautorefname{subsubsection}{Section}%
\makeautorefname{theorem}{Theorem}%
\makeautorefname{thm}{Theorem}%
\makeautorefname{cor}{Corollary}%
\makeautorefname{lem}{Lemma}%
\makeautorefname{prop}{Proposition}%
\makeautorefname{pro}{Property}
\makeautorefname{conj}{Conjecture}%
\makeautorefname{defn}{Definition}%
\makeautorefname{notn}{Notation}
\makeautorefname{notns}{Notations}
\makeautorefname{rem}{Remark}%
\makeautorefname{exmp}{Example}%
\makeautorefname{ax}{Axiom}%
\makeautorefname{claim}{Claim}%
\makeautorefname{ass}{Assumption}%
\makeautorefname{asses}{Assumptions}%
\makeautorefname{con}{Construction}%
\makeautorefname{prob}{Problem}%
\makeautorefname{warn}{Warning}%

%

\newtheorem{thm}{Theorem}[section]
\newtheorem{cor}{Corollary}[section]
\newtheorem{prop}{Proposition}[section]
\newtheorem{lem}{Lemma}[section]
\newtheorem{prob}{Problem}[section]

\theoremstyle{definition}
\newtheorem{defn}{Definition}[section]
\newtheorem{ass}{Assumption}[section]
\newtheorem{ax}{Axiom}[section]
\newtheorem{con}{Construction}[section]
\newtheorem{exmp}{Example}[section]
\newtheorem{notn}{Notation}[section]

\newtheorem{rem}{Remark}[section]
\newtheorem{warn}{Warning}[section]

\makeatletter
\let\c@cor=\c@thm
\let\c@prop=\c@thm
\let\c@lem=\c@thm
\let\c@prob=\c@thm
\let\c@con=\c@thm
\let\c@conj=\c@thm
\let\c@defn=\c@thm
\let\c@notn=\c@thm
\let\c@notns=\c@thm
\let\c@exmp=\c@thm
\let\c@ax=\c@thm
\let\c@pro=\c@thm
\let\c@ass=\c@thm
\let\c@warn=\c@thm
\let\c@rem=\c@thm
\let\c@sch=\c@thm
\let\c@equation\c@thm
\numberwithin{equation}{section}
\makeatother

\definecolor{orange}{rgb}{1,0.5,0}

\renewcommand{\bf}[1]{\mathbf{#1}}

\newcommand{\cE}{\mathcal{E}}

\newcommand{\opa}{\sO\text{-}\bf{PsAlg}}
\newcommand{\oap}{\sO\text{-}\bf{AlgPs}}
\newcommand{\oas}{\sO\text{-}\bf{AlgSt}}

\newcommand{\tpa}{\bT\text{-}\bf{PsAlg}}
\newcommand{\etp}{\bT\text{-}\bf{AlgPs}}
\newcommand{\ets}{\bT\text{-}\bf{AlgSt}}

\bibliographystyle{plain}


\title{Symmetric monoidal $G$-categories and their strictification}

\author{B. Guillou}
\address{Department of Mathematics, The University of Kentucky, Lexington, KY 40506--0027}
\email{bertguillou@uky.edu}
\author{J.P. May}
\address{Department of Mathematics, The University of Chicago, Chicago, IL 60637}
\email{may@math.uchicago.edu}
\author{M. Merling}
\address{department of Mathematics, The University of Pennsylvania, Philadelphia, PA 19104}
\email{mmerling@math.upenn.edu}
\author{A.M. Osorno}
\address{Department of Mathematics, Reed College, Portland, OR 97202}
\email{aosorno@reed.edu}

\thanks{B. Guillou was  supported  by Simons Collaboration Grant  282316 and NSF grant DMS-1710379.}\thanks{M. Merling was supported by NSF grant DMS-1709461}
\thanks{A.M. Osorno was  supported by Simons Foundation Grant  359449, the Woodrow Wilson Career Enhancement Fellowship and NSF grant DMS-1709302}
\thanks{NSF RTG grant DMS-1344997 supported several collaborator visits to Chicago.}

\subjclass[2010]{Primary 18D10, 18C15, 55P48; \\Secondary 55P91, 55U40}

\begin{document}

\begin{abstract} 
We give an operadic definition of a \emph{genuine} symmetric monoidal $G$-category, and we prove 
that its classifying space is a genuine $E_\infty$ $G$-space. 
We do this by developing some very general categorical coherence theory.  We combine results of Corner and Gurski, Power, and Lack, to develop a strictification theory for pseudoalgebras over operads and monads.  It specializes to strictify genuine symmetric monoidal $G$-categories to genuine permutative $G$-categories. All of our work takes place in a general internal categorical framework that has many quite different specializations. 

When $G$ is a finite group, the theory here
combines with previous work to generalize equivariant infinite loop space theory from strict 
space level input to considerably more general category level input.   It takes genuine 
symmetric monoidal $G$-categories as input to an equivariant infinite loop space machine that 
gives genuine $\OM$-$G$-spectra as output.
\end{abstract}

\maketitle

\tableofcontents

\setcounter{page}{2}

\section*{Introduction and statements of results}
Symmetric monoidal categories are fundamental to much of mathematics, and they provide crucial input to the infinite loop space theory developed in the early 1970's.   There it was very convenient to use the still earlier categorical strictification theory showing that symmetric monoidal categories are monoidally equivalent to symmetric strict monoidal categories, whose products are strictly associative and unital. Following Anderson \cite{And}, topologists call symmetric strict monoidal categories ``permutative categories''.  

Equivariantly, we take this as inspiration, and in this paper we give a definition of \emph{genuine} symmetric monoidal $G$-categories and prove that they can be strictified to genuine permutative $G$-categories, as defined in \cite{GM3}.    These are $G$-categories with extra structure that ensures that their classifying spaces are genuine $E_\infty$ $G$-spaces, so that after equivariant group completion they can be delooped by any finite dimensional representation $V$ of $G$.  This theory shows that we can construct genuine $G$-spectra and maps between them from genuine symmetric monoidal 
$G$-categories and functors that respect the monoidal structure only up to isomorphism.    

While this paper is a spin-off from a large scale ongoing project on equivariant infinite loop space theory, it gives a reasonably self-contained exposition of the relevant categorical coherence theory.  In contrast to its equivariant setting in our  larger project, this work is designed to be more widely applicable, and in fact the equivariant setting plays no particular role other than providing motivation.  We say more about that motivation shortly, but we first discuss the categorical context in which most of our work takes place. 

Category theorists have developed a powerful and subtle theory of $2$-monads and their pseudoalgebras  \cite{BKP,  Lack, Power, StMon}. It gives just the right framework and results for our strictification theorem.  Working in an arbitrary ground $2$-category $\sK$, 
we briefly recall the definitions of $2$-monads  $\bT$, (strict) $\bT$-algebras and $\bT$-pseudoalgebras, (strict) $\bT$-maps and $\bT$-pseudomorphisms, and  algebra $2$-cells in \autoref{Monpseudo}. With these definitions, we have the following three $2$-categories.\footnote{We shall make no use of the second choice.  We include it because it is often convenient and much of the relevant categorical literature focuses on it.}

\vspace{1.5mm}

$\bullet$ $\tpa$: $\bT$-pseudoalgebras and $\bT$-pseudomorphisms.

\vspace{2mm}

$\bullet$ $\etp$: $\bT$-algebras and $\bT$-pseudomorphisms.

\vspace{2mm}

$\bullet$ $\ets$: $\bT$-algebras and (strict) $\bT$-maps.

\vspace{1.5mm}

\noindent
In all of them, the $2$-cells are the algebra 2-cells.

Power discovered \cite{Power} and Lack elaborated \cite{Lack} a remarkably simple way to strictify structures over a 
$2$-monad.\footnote{We are greatly indepted to Power and Lack for correspondence about this result.} 
Power's short paper defined the strictification $\bf{St}$ on pseudo\-algebras, and Lack's  short paper (on codescent objects) defined $\bf{St}$ on $1$-cells and $2$-cells.  The result and its proof are truly beautiful category theory.  Generalizing to our internal categorical context, we obtain the following strictification theorem in \autoref{PL1}.

\begin{thm}\label{Lack0} Let $\sK$ have a rigid enhanced factorization system $(\mathcal{E},\sM)$ and let $\bT$ be a monad in $\sK$ which preserves $\cE$.
Then the inclusion of $2$-categories
\[ \bJ\colon \ets \rtarr \tpa  \]
has a left 2-adjoint strictification $2$-functor $\mathbf{St}$, and the component of the unit 
 of the adjunction is an internal equivalence in $\tpa$.
\end{thm}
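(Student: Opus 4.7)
The plan is to adapt the Power--Lack strategy to the internal setting by systematic use of the rigid enhanced factorization system $(\cE,\sM)$. For each pseudoalgebra $(A, a, \alpha, \lambda)$ in $\tpa$, I would apply the factorization to the structure map to obtain
\[
TA \xrightarrow{\;e_A\;} \mathbf{St}(A) \xrightarrow{\;m_A\;} A
\]
with $e_A\in\cE$ and $m_A\in\sM$. The object $\mathbf{St}(A)$ will be the underlying object of the strictification; the hypothesis that $\bT$ preserves $\cE$ ensures that $Te_A$ is again in $\cE$, which is what makes the next step possible.

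Next, I would equip $\mathbf{St}(A)$ with a strict $\bT$-action $\tilde{a}\colon T\mathbf{St}(A) \to \mathbf{St}(A)$ by solving a lifting problem whose left side is $Te_A\in\cE$ and whose right side is $m_A\in\sM$; the two parallel sides are assembled from the multiplication of $\bT$, the pseudoaction $a$, the map $Ta$, and the pseudoassociator $\alpha$, and they agree only up to an invertible $2$-cell, so the rigid enhancement of the factorization system is precisely what guarantees a unique diagonal lift. The strict unit and associativity axioms for $\tilde{a}$ then follow from uniqueness of lifts applied to suitable diagrams in which $\alpha$ and $\lambda$ make the required squares $2$-commute, and by construction both $e_A$ and $m_A$ promote to pseudomorphisms in $\tpa$. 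The map $m_A$ will turn out to be an internal equivalence in $\tpa$ with pseudoinverse $\eta_A\colon A \to \mathbf{St}(A)$ obtained by composing the monad unit $A\to TA$ with $e_A$; the invertible $2$-cells witnessing this equivalence are assembled from the coherence data $\alpha,\lambda$ together with uniqueness of lifts, and this $\eta_A$ is the candidate component of the unit of the $2$-adjunction.

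Finally, I would extend $\mathbf{St}$ to a $2$-functor on $\tpa$ and verify the universal property. For a pseudomorphism $f\colon A \to B$ between pseudoalgebras, the induced strict $\bT$-map $\mathbf{St}(f)\colon \mathbf{St}(A) \to \mathbf{St}(B)$ and its action on algebra $2$-cells are obtained by lifting through the factorization of $a$ and using orthogonality to land in $\mathbf{St}(B)$; when $B$ is strict one has $\mathbf{St}(B) = B$, so the correspondence delivers a bijection $\ets(\mathbf{St}(A), B) \cong \tpa(A, B)$ realized by precomposition with $\eta_A$. The main obstacle is verifying this last step: one must check that the lifts assemble $2$-functorially in $f$, that the resulting hom-category bijection is compatible with composition and with algebra $2$-cells, and that the whole correspondence is strictly $2$-natural in both variables. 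This is the technical heart of the argument, essentially transferring the $\mathbf{Cat}$-enriched work of Power and Lack, and it rests on the precise way in which the rigid enhanced factorization system encodes the coherence conditions on pseudoalgebras and pseudomorphisms.
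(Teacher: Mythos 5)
Your skeleton is exactly the paper's Power--Lack strategy (factor the pseudoaction through the EFS, lift a strict action along $T e_A$ against $m_A$, strictify $1$-cells by the same lifting, take $\eta_A = e_A \circ \iota_A$), but rigidity is invoked in the wrong place, and this matters. The diagonal fillers you use to build $\tilde{a}$ and $\mathbf{St}(f)$ come from the \emph{enhanced} lifting properties (ii) and (iii) of \autoref{EFS}, which require only that $Te_A \in \cE$ and $m_A \in \sM$; rigidity plays no role there. Where rigidity (property (iv)) is genuinely needed is precisely the step you gloss over: by normality one has $m_A \circ \eta_A = a \circ \iota_A = \mathrm{id}_A$ on the nose, but nothing in the coherence data or in uniqueness of lifts gives you any $2$-cell $\eta_A \circ m_A \Rightarrow \mathrm{id}$. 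It is exactly (iv) that produces the invertible $\nu\colon \eta_A \circ m_A \Rightarrow \mathrm{id}$, and without it the unit need not be an internal equivalence --- this is why ``rigid'' is a hypothesis of the theorem at all. Even granting $\nu$, one must still lift the resulting adjoint equivalence from $\sK$ to $\tpa$, i.e.\ construct the invertible $2$-cell $\omega$ making $(\eta_A, \omega)$ a pseudomorphism; the paper does this with a doctrinal-adjunction style argument (\autoref{LiftBit}), which your proposal does not supply.

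The second error is your claim that $\mathbf{St}(B) = B$ when $B$ is strict, on which your verification of the universal property rests. This is false: $\mathbf{St}(B)$ is the image factorization of the action $TB \to B$, whose underlying object is built from $TB$ (in the operadic case $\mathbf{Ob}(\mathbf{St}\,\sX) = \mathbf{Ob}(\bO\sX)$, not $\mathbf{Ob}\,\sX$). The counit $m_B \colon \mathbf{St}\,\bJ B \to B$ is a strict map that is generally not invertible, and not even an equivalence in $\ets$ --- it becomes an equivalence only after applying $\bJ$ (see \autoref{counitissue}); indeed, if $\mathbf{St}\,\bJ B = B$ held, every strict algebra would be flexible, which fails (\autoref{flexible}). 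The correct move, as in the paper, is to transpose a pseudomorphism $(f,\zeta)\colon A \to B$ with $B$ strict to $\widetilde{f} = m_B \circ \mathbf{St}(f)$, which is strict because $m_B$ is strict when $B$ is, and to prove $\widetilde{f}$ is the unique strict map with $(\widetilde{f}, \mathrm{id}) \circ \eta_A = (f,\zeta)$. One further comparison worth noting: the checks you identify as the technical heart (that the lifts assemble $2$-functorially in $f$ and that the hom-bijection is strictly $2$-natural) can be avoided entirely; once the isomorphism of hom-categories $\ets(\mathbf{St} A, B) \cong \tpa(A, \bJ B)$ is established objectwise, the formal \autoref{cute} assembles $\mathbf{St}$ into a left $2$-adjoint $2$-functor automatically.
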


As we explain in \autoref{counitissue}, the counit also becomes an internal equivalence once we use $\bJ$ to consider it as a map of pseudo-algebras.

We shall take the opportunity to expand on the papers of Power and Lack with a number of new details, and we give a reasonably complete and self-contained exposition.  The hypothesis about rigid enhanced factorization systems (EFS) is developed and specialized to the examples of interest to us in \autoref{EFS0}, and the construction of $\bf{St}$ and proof of the theorem are given in \autoref{PL}.

The reader is forgiven if she does not immediately see a connection between this theorem and our motivation in terms of symmetric monoidal $G$-categories.  That is what the rest of the paper provides.
 Our focus is on the $2$-category $\sK = \bf{Cat}(\sV)$ of categories internal to a suitable category 
 $\sV$.  We describe this context in \autoref{Pre1}.  We specify a rigid EFS on $\bf{Cat}(\sV)$ in \autoref{EFS2}, deferring proofs to \autoref{EFS3}.\footnote{We are greatly indepted to Gurski for correspondence about this generalization of the EFT on $\bf{Cat}$ defined by Power \cite{Power}.}  This has nothing to do with operads or monads.
 
As we show in \autoref{OpMon}, an operad $\sO$ in $\bf{Cat}(\sV)$ has an associated $2$-monad 
$\bO$ defined on $\bf{Cat}(\sV)$.  Guided by the monadic theory and largely following Corner and Gurski \cite{CG}, we define $\sO$-pseudoalgebras, $\sO$-pseudomorphisms, and algebra $2$-cells 
(alias $\sO$-transformations) in \autoref{Oppseudo}.  With these definitions, we have the three $2$-categories

\vspace{1.5mm}

$\bullet$ $\opa$: $\sO$-pseudoalgebras and $\sO$-pseudomorphisms.

\vspace{2mm}

$\bullet$ $\oap$: $\sO$-algebras and $\sO$-pseudomorphisms.

\vspace{2mm}

$\bullet$ $\oas$: $\sO$-algebras and (strict) $\sO$-maps.

\vspace{1.5mm}

\noindent
In all of them, the $2$-cells are the algebra 2-cells.

With motivation from symmetric monoidal categories, our definitions in \autoref{pseudo} differ a bit from those in the literature, in particular adding normality conditions.  We have tailored our definitions so that an immediate comparison gives the following monadic identifications of our $2$-categories of operadic algebras in $\bf{Cat}(\sV)$.

\vspace{1.5mm}

\begin{center}

 $\sO\text{-}\bf{PsAlg}= \bO\text{-}\bf{PsAlg} $

\vspace{2mm}

 $\sO\text{-}\bf{AlgPs} =  \bO\text{-}\bf{AlgPs}$
\vspace{2mm}

 $\sO\text{-}\bf{AlgSt} = \bO\text{-}\bf{AlgSt} $ 
 
 \end{center}
 
 \vspace{1.5mm}
 
It requires some work to define $\opa$ since $\bf{Cat}(\sV)$ is a 2-category, so that instead of requiring the usual diagrams in the strict context to commute, we must fill them with 2-cells that are required to be coherent and we must make the coherence precise. The monadic forerunner charts the path.

Of course, this is analogous to the identification of  $\sO$-algebras and $\bO$-algebras for operads in spaces that  motivated the coinage of the word ``operad'' in the first place \cite{MayGeo}.  The theory of $2$-monads gives a formalism that allows us to treat operad algebras in a context with many other examples. It will be applied to algebras over categories of operators in a sequel.

With these identifications, \autoref{Lack} has the following specialization.
 
 \begin{thm}\label{ConjOut1} Let $\sO$ be an operad in $\bf{Cat}(\sV)$.  Then the inclusion of $2$-categories 
$$ \bJ\colon \oas \rtarr \opa $$
has a left 2-adjoint 
$$\mathbf{St} \colon \opa \rtarr \oas,$$
and the components of the unit 
 of the adjunction are internal equivalences in $\opa$.
\end{thm}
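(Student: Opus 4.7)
The plan is to derive this theorem by specializing Theorem \ref{Lack0} to the $2$-monad $\bO$ on $\sK = \bf{Cat}(\sV)$ built from $\sO$ in Section OpMon. By the monadic identifications displayed just above the statement, the $2$-categories $\opa$, $\oap$, $\oas$ coincide on the nose with $\bO\text{-}\bf{PsAlg}$, $\bO\text{-}\bf{AlgPs}$, $\bO\text{-}\bf{AlgSt}$, and the inclusion $\bJ\colon \oas\to\opa$ is literally the inclusion $\bO\text{-}\bf{AlgSt}\to\bO\text{-}\bf{PsAlg}$ considered there. Therefore it suffices to verify the hypotheses of the abstract strictification theorem for the pair $(\sK,\bT)=(\bf{Cat}(\sV),\bO)$, namely: (i) $\bf{Cat}(\sV)$ admits a rigid enhanced factorization system $(\cE,\sM)$; and (ii) the monad $\bO$ preserves $\cE$.

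Hypothesis (i) is exactly what Section EFS2 supplies, with proofs postponed to Section EFS3, so we simply invoke it. The substantive content lies in (ii). The endo-$2$-functor $\bO$ is assembled, in each operadic arity $n$, from the functor $\sO(n)\times(-)^n$ followed by the coequalizer collapsing the diagonal $\Sigma_n$-action, and the resulting pieces are combined by a coproduct over $n$. Consequently, to conclude that $\bO f \in \cE$ whenever $f\in\cE$, it suffices to know that $\cE$ is stable under (a) cartesian product with a fixed object and iterated self-products $(-)^n$, (b) the $\Sigma_n$-orbit coequalizer in $\bf{Cat}(\sV)$, and (c) set-indexed coproducts. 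Items (a) and (c) are routine from the description of a rigid EFS in $\bf{Cat}(\sV)$; item (b) requires the explicit characterization of $\cE$ from Section EFS2 and a careful tracking of the quotient internal to $\sV$.

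With (i) and (ii) in place, Theorem \ref{Lack0} produces a left $2$-adjoint $\mathbf{St}\colon \bO\text{-}\bf{PsAlg}\to\bO\text{-}\bf{AlgSt}$ to $\bJ$ whose unit components are internal equivalences in $\bO\text{-}\bf{PsAlg}$, and translating along the monadic identifications yields exactly the asserted conclusion. The principal obstacle will be (ii), and within it the stability of $\cE$ under the $\Sigma_n$-quotient: since the left class of the EFS on $\bf{Cat}(\sV)$ typically mixes a surjectivity condition on objects with a fullness or bijectivity condition on morphism objects, both must be shown to persist through the internal coequalizer of the $\Sigma_n$-action in $\sV$. Once this is handled, the remaining closure properties and the transport along the identifications $\opa = \bO\text{-}\bf{PsAlg}$, $\oas = \bO\text{-}\bf{AlgSt}$ are purely formal.
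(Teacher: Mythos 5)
Your overall skeleton is the paper's own: specialize the abstract strictification theorem (\autoref{Lack}, the body version of \autoref{Lack0}) to the $2$-monad $\bO$ of \autoref{OMon}, transport along the identifications $\opa = \bO\text{-}\bf{PsAlg}$ and $\oas = \bO\text{-}\bf{AlgSt}$, cite \autoref{EFSCatV} for the rigid EFS $(\sB\sO,\sF\sF)$ on $\bf{Cat}(\sV)$, and reduce everything to showing that $\bO$ preserves the left class $\cE$. But your execution of that last step has a genuine gap, in three respects. First, you misidentify $\cE$: you guess it ``mixes a surjectivity condition on objects with a fullness or bijectivity condition on morphism objects,'' whereas in this paper $\cE = \sB\sO$ is purely the class of internal functors whose object map is an isomorphism in $\sV$; the morphism-level pullback condition defines the \emph{right} class $\sM = \sF\sF$. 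Second, your decomposition of $\bO$ as a coproduct over $n$ of $\SI_n$-orbit objects $\sO(n)\times_{\SI_n}(-)^n$ describes $\bO_+$, not $\bO$: the monad actually used is the tensor $\sO\otimes_{\LA}\sA^{\bullet}$ over the category $\LA$ of injections and permutations, so base-object (degeneracy) identifications also intervene, and your closure checklist (a)--(c) is incomplete even as a plan. Third, you explicitly leave the ``principal obstacle''---stability of $\cE$ under the $\SI_n$-coequalizer internal to $\sV$---unproven, so what you have is a program rather than a proof.

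The paper dispatches all of this with one observation that makes the piecewise closure analysis unnecessary: the functor $\bf{Ob}\colon \bf{Cat}(\sV)\rtarr \sV$ is simultaneously a right adjoint (to the discrete-category inclusion of \autoref{Vembed}) and a left adjoint (to the chaotic-category functor of \autoref{chaotic0}), hence preserves \emph{all} limits and colimits. Consequently $\bf{Ob}(\bO\sX)\iso (\bf{Ob}\bO)(\bf{Ob}\sX)$, where $\bf{Ob}\bO$ is the monad in $\sV$ associated to the object operad $\bf{Ob}\sO$, and since any functor preserves isomorphisms, $\bf{Ob}(\bO f) = (\bf{Ob}\bO)(\bf{Ob} f)$ is an isomorphism whenever $\bf{Ob} f$ is. This single line subsumes your items (a), (b), (c) together with the degeneracy identifications you omitted; there is no quotient in $\sV$ to track by hand. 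If you want to salvage your route, you would have to prove the $\SI_n$-coequalizer and $\LA$-tensor stability directly, but the correct statement to aim for is exactly that $\bf{Ob}$ commutes with the colimits in question---at which point you have rediscovered the paper's argument in a harder-to-state form.
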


Returning to our motivation, we discuss the specialization to symmetric monoidal categories in \autoref{genuine}. Nonequivariantly, permutative categories are the same thing as $\sP$-algebras in $\mathbf{Cat}$, where $\sP$ is the categorical version of the 
Barratt-Eccles operad. Formally, the category of permutative  categories is isomorphic to the 
category of $\sP$-algebras in $\mathbf{Cat}$ \cite{MayPerm}.  This suggests a generalization in which we replace $\sP$ by a more general operad and replace $\mathbf{Cat}$ by a more general category of (small) categories.  The generalization is illuminating nonequivariantly and should have other applications, but it is essential equivariantly, as we now explain. 

A naive permutative $G$-category is a permutative category with $G$-action, that is, a $G$-category with an action of the operad  $\sP$, where we think of the categories $\sP(j)$ as $G$-categories with trivial $G$-action.  Nonequivariantly, permutative categories are the input of an operadic infinite loop space machine defined in \cite{MayPerm, Seg} and axiomatized in \cite{MayPerm2}.  Its output is connective $\OM$-spectra with zeroth space given by the group completion of the classifying space of the input permutative category. Naive permutative $G$-categories work the same way.  They naturally give rise to naive $\OM$-$G$-spectra, which are $\OM$-spectra with $G$-action.  They represent $\mathbb{Z}$-graded equivariant cohomology theories, such as classical Borel and Bredon cohomology.  As is well-known, these are inadequate for applications.  In particular, no version of Poincar\'e duality  for $G$-manifolds holds in them.   For that, one must work in cohomology theories graded on representations (e.g. \cite[Chapter~III]{LMS}), and those are represented by \emph{genuine} $\OM$-$G$-spectra. The zeroth space of such a spectrum has deloopings not only for all spheres $S^n$, but also for all representation spheres $S^V$.

Genuine permutative $G$-categories are defined in \cite{ GM3} as algebras over an equivariant generalization $\sP_G$ of $\sP$ , and these give the input for an operadic equivariant infinite loop space machine that produces genuine $\OM$-$G$-spectra. We do not know any interpretation of genuine permutative $G$-categories other than the operadic one.  Since the operads $\sP$ and  $\sP_G$ are the ones whose 
algebras are permutative categories, we call them the {\em permutativity operads} henceforward, and we recall their definitions in \S\ref{genuine}.

Morphisms between symmetric monoidal categories, or even between permutative categories, are rarely strict; they are given by strong and sometimes even lax symmetric monoidal functors.   Classical coherence theory shows how to convert such morphisms of symmetric monoidal categories to symmetric strict monoidal functors between permutative categories.  By first strictifying and then applying a classical infinite loop space machine to classifying spaces, this allows classical infinite 
loop space theory to construct morphisms between spectra from strong symmetric monoidal functors between symmetric monoidal categories.   Our theory will allow us to do the same thing equivariantly, starting from genuine symmetric monoidal $G$-categories, but we must first define what those are.

A pseudoalgebra over $\sP$ is a (small) symmetric  monoidal category.\footnote{This is true up to 
minor quibbles explained in \S\ref{genuine}}  This suggests the following new definition. We shall be more precise in \S\ref{genuine}. 

\begin{defn}\label{Gsymm} A {\em (genuine) symmetric monoidal $G$-category} is a $\sP_G$-pseudo\-algebra.  A strong symmetric monoidal functor of symmetric monoidal $G$-categories is a pseudomorphism of $\sP_G$-algebras.  A transformation between strong symmetric monoidal 
functors is a $\sP_G$-transformation.
\end{defn}   

Henceforward, when we say ``symmetric monoidal $G$-category" we always mean ``genuine." When we talk about \emph{naive} symmetric monoidal $G$-categories, we will always explicitly say ``naive."  The same convention applies to permutative $G$-categories.   As we  explain in \S\ref{genuine}, there is a functor that sends naive permutative $G$-categories to naively equivalent genuine permutative $
G$-categories and sends naive symmetric monoidal $G$-categories to naively equivalent 
genuine symmetric monoidal $G$-categories.  The functor applies to nonequivariant permutative and symmetric monoidal categories, viewed as $G$-categories with trivial $G$-action.   This gives a plentitude of examples.  

We discuss the philosophy behind \autoref{Gsymm} in \S \ref{genuine}, where we also indicate relevant categorical questions that have been addressed by Rubin \cite{Rubin1, Rubin2} in work complementary to ours.  He works concretely in the equivariant context of $N_{\infty}$ $G$-operads pioneered by Blumberg and Hill \cite{BH} and developed further by Rubin and others \cite{BP, GW, Rubin1}, and he compares our symmetric monoidal $G$-categories  with the analogous but definitionally disparate 
context of $G$-symmetric monoidal categories of  Hill and Hopkins \cite{HH}.  We shall say a bit more about his work in \S\ref{genuine}. 

It is not obvious that (genuine) symmetric monoidal $G$-categories are equivalent to (genuine) permutative $G$-categories, but \autoref{ConjOut1} shows that they are. 

\begin{cor}\label{strictcat}   The inclusion of permutative $G$-categories in symmetric monoidal $G$-categories has a left 2-adjoint strictification  $2$-functor.  For a symmetric monoidal $G$-category $\sX$, the unit $\sX \rtarr  \mathbf{St}\sX$  of the adjunction is an equivalence of symmetric monoidal $G$-categories.
\end{cor}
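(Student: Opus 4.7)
The plan is simply to specialize \autoref{ConjOut1} to the case $\sO = \sP_G$, the permutativity $G$-operad, interpreted in the internal categorical context $\bf{Cat}(\sV)$ appropriate to $G$-categories. No new construction is required; the entire content of the corollary is a translation of dictionary.

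First I would unpack the identifications. By \autoref{Gsymm}, the 2-category $\sP_G\text{-}\bf{PsAlg}$ has as objects the (genuine) symmetric monoidal $G$-categories, as 1-cells the strong symmetric monoidal functors, and as 2-cells the monoidal transformations. Because permutative $G$-categories are \emph{defined} in \cite{GM3} as strict $\sP_G$-algebras, and strict morphisms between them are strict $\sP_G$-maps, the 2-category $\sP_G\text{-}\bf{AlgSt}$ is precisely the 2-category of permutative $G$-categories, strict morphisms, and monoidal transformations. The inclusion named in the corollary is therefore the 2-functor
\[
\bJ\colon \sP_G\text{-}\bf{AlgSt}\;\longrightarrow\;\sP_G\text{-}\bf{PsAlg}.
\]

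Applying \autoref{ConjOut1} to $\sO=\sP_G$ directly yields a left 2-adjoint $\mathbf{St}\colon \sP_G\text{-}\bf{PsAlg}\rtarr\sP_G\text{-}\bf{AlgSt}$ with the property that each component of the unit is an internal equivalence in $\sP_G\text{-}\bf{PsAlg}$. Unwinding the meaning of ``internal equivalence'' in this 2-category produces, for each symmetric monoidal $G$-category $\sX$, a strong symmetric monoidal functor $\sX\rtarr\mathbf{St}\sX$ with a strong symmetric monoidal pseudo-inverse and monoidal 2-cells witnessing the equivalence, i.e.\ precisely an equivalence of symmetric monoidal $G$-categories in the sense of the statement.

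The main thing to check is that \autoref{ConjOut1} is actually applicable in the equivariant setting we care about. Concretely, this means verifying that the ambient 2-category $\bf{Cat}(\sV)$ for the relevant $\sV$ (the category in which $G$-actions are internalized) carries the rigid EFS constructed in \autoref{EFS2}, and that the 2-monad $\bO$ associated to $\sO=\sP_G$ via \autoref{OpMon} preserves the class $\cE$ of the EFS. Both points are general facts about operads in $\bf{Cat}(\sV)$ already established earlier in the paper, and there is no additional equivariant input needed. Since the genuinely hard work has been subsumed into \autoref{ConjOut1}, there is no real further obstacle; the proof is a one-line invocation together with the dictionary just described.
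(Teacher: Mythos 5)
Your proposal is correct and is essentially the paper's own argument: the corollary is obtained by specializing \autoref{ConjOut1} to $\sO=\sP_G$, with the identification of $\sP_G$-pseudoalgebras as symmetric monoidal $G$-categories (\autoref{Gsymm}) and of strict $\sP_G$-algebras as permutative $G$-categories being true by definition. The hypotheses you flag (the rigid EFS on $\bf{Cat}(\sV)$ and preservation of $\cE$ by the associated $2$-monad) are exactly the ones the paper discharges in \autoref{EFSCatV} and \S\ref{PL1}, so nothing further is needed.
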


Combined with the results of \cite[\S4.5]{GM3}, this gives the following conclusion.

\begin{thm}
There is a functor $\mathbb{K}_G$ from symmetric monoidal $G$-categories to $\OM$-$G$-spectra such that $\OM^\infty \mathbb{K}_G(\sA)$  is an equivariant group completion of the classifying $G$-space $B\sA$.
\end{thm}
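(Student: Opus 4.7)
The plan is to define $\mathbb{K}_G$ as a composite of two functors. By \autoref{strictcat}, strictification gives a left 2-adjoint $\mathbf{St}$ from symmetric monoidal $G$-categories to permutative $G$-categories, whose unit components $\eta_\sA\colon \sA \to \mathbf{St}\sA$ are equivalences of symmetric monoidal $G$-categories. By \cite[\S4.5]{GM3}, there is an operadic equivariant infinite loop space machine $\mathbb{K}_G^{\mathrm{GM}}$ taking permutative $G$-categories to $\OM$-$G$-spectra with the property that $\OM^\infty \mathbb{K}_G^{\mathrm{GM}}(\sC)$ is an equivariant group completion of the classifying $G$-space $B\sC$. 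I would set $\mathbb{K}_G := \mathbb{K}_G^{\mathrm{GM}} \circ \mathbf{St}$.

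To see that this is a 1-functor on symmetric monoidal $G$-categories and their strong symmetric monoidal functors, note that by construction $\mathbf{St}$ converts pseudomorphisms of $\sP_G$-algebras into \emph{strict} morphisms of $\sP_G$-algebras (this is exactly the content of the 2-adjunction), which are then valid input to $\mathbb{K}_G^{\mathrm{GM}}$. Composition of 1-functors then gives the functor $\mathbb{K}_G$.

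For the group completion property, I would apply the classifying $G$-space functor $B$ to the unit $\eta_\sA$. Since $\eta_\sA$ is an equivalence in the 2-category of symmetric monoidal $G$-categories, it is in particular an internal equivalence in $\mathbf{Cat}(G\sU)$; and $B$ carries internal equivalences in $\mathbf{Cat}(G\sU)$ to $G$-equivalences of $G$-spaces (checked at each fixed-point level by the standard property of the nerve construction). Hence $B\eta_\sA\colon B\sA \to B\mathbf{St}\sA$ is a $G$-equivalence. Combining this with the group completion property of the GM3 machine applied to $\mathbf{St}\sA$, we conclude that
\[ \OM^\infty \mathbb{K}_G(\sA) = \OM^\infty \mathbb{K}_G^{\mathrm{GM}}(\mathbf{St}\sA) \]
is an equivariant group completion of $B\mathbf{St}\sA$, hence also of the $G$-equivalent $G$-space $B\sA$.

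I do not anticipate a substantive obstacle: the real work has already been done in the strictification theorem just above and in the construction of the machine in \cite{GM3}. The only small verification is that the classifying space functor $B$ for internal categories in $G\sU$ respects the relevant notion of internal equivalence, which is standard.
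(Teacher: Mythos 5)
Your proposal is correct and is essentially identical to the paper's proof: the paper obtains the theorem precisely by combining \autoref{strictcat} (strictification, with the unit $\sA \rtarr \mathbf{St}\sA$ an equivalence, so that $B\sA \simeq B\mathbf{St}\sA$ as $G$-spaces) with the equivariant infinite loop space machine of \cite[\S 4.5]{GM3} applied to the permutative $G$-category $\mathbf{St}\sA$. The paper even records the same functoriality point you make, noting that $\mathbb{K}_G$ carries $\sP_G$-pseudomorphisms to maps of $G$-spectra (and algebra $2$-cells to homotopies, via \autoref{hom}).
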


Thus $\mathbb{K}_G$ takes $\sP_G$-pseudoalgebras and $\sP_G$-pseudomorphism to  genuine $G$-spectra and maps of $G$-spectra; it even takes algebra 2-cells between 
$\sP_G$-pseudo\-morphisms to homotopies between maps of $G$-spectra (\autoref{hom}).
The proofs give explicit constructions.
Even nonequivariantly, this is a generalization of previous published work, although this
specialization has long been understood as folklore. At least on a formal level, this, coupled with \cite{ GM3, MMO}, completes the development of additive equivariant infinite loop space theory.   

\subsection*{Acknowledgements}   The essential ideas in this paper come from the beautiful categorical  papers by Power \cite{Power} and Lack \cite{Lack} and from earlier categorical
work of Kelly and Street, for example in \cite{BKP, StMon}.  This paper is a testament to the power of ideas in the categorical literature.  
We owe an enormous debt of gratitude to Steve Lack, John Power, Nick Gurski, and Mike Shulman for all of  their help.  We also thank Jonathan Rubin for the nice observation recorded in \S\ref{AppRubin}, which helps justify our framework of internal rather than just enriched categories. Finally, we thank the diligent referee for a careful reading.
 
\section{Categorical preliminaries}\label{prelim}

\subsection{Internal categories}\label{Pre1}

We need some elementary category theory to nail down relevant details about our general context.
In part to do equivariant work without working equivariantly, we work in a context of 
internal $\sV$-categories, where $\sV$ is any category with all finite limits. 
Some obvious examples are the category $\mathbf{Set}$ of sets, the category $\bf{Cat}$ of (small) categories, 
and the category $\sU$ of spaces,\footnote{As usual, spares are taken to be compactly generated and weak Hausdorff.}
but there are many others.  All examples come with based and equviariant variants, and the 
latter are of special interest to us.

\begin{rem} The category $\sV$ has a terminal object $\ast$, namely the product of the empty set of objects. 
A based object in $\sV$ is an object $V$ with a choice of morphism $v_0\colon \ast\rtarr V$. 
A based map $(V,v_0)\rtarr (W,w_0)$ is a morphism $V\rtarr W$ that is compatible with the choices of basepoint, and $\sV_*$ denotes
the category of based objects and based morphisms. Finite limits in $\sV_*$ are finite limits in $\sV$ with the induced map from $\ast$ given by the universal property.
\end{rem}

\begin{rem}\label{DefnGV} Let $G$ be a discrete group.  A $G$-object $V$ in $\sV$ has an action of $G$ 
given by automorphisms $g\colon V\rtarr V$ satisfying the evident unit and composition axioms. 
A $G$-map is a morphism $V\rtarr W$ that is compatible with given group actions, and $G\sV$ denotes the 
category of $G$-objects and $G$-maps. Finite limits in $G\sV$ are finite limits in $\sV$ with the induced action by $G$.
\end{rem}

We understand $\sV$-categories to mean internal $\sV$-categories and we recall the definition. 

\begin{defn}\label{Vcatfun} A \emph{$\sV$-category} $\sC$ consists of
objects $\mathbf{Ob}\,\sC$ and $\mathbf{Mor}\,\sC$ of $\sV$ with  source, target, identity, and 
composition maps $S$, $T$, $I$, and $C$ in $\sV$ that satisfy the axioms
of a category.  A \emph{$\sV$-functor} $f\colon \sC\rtarr \sC'$ is 
given by object and morphism maps in $\sV$ that commute with $S$, $T$, $I$, and $C$.
We write $\mathbf{Cat}(\sV)$ for the category of $\sV$-categories and $\sV$-functors.
\end{defn}

By contrast, a small category $\sD$ enriched in $\sV$ is given by a set of objects and an object $\sD(c,d)$ 
of $\sV$ for each pair $(c,d)$ of objects of $\sD$, 
with composition given by maps in $\sV$ and identities 
given by maps $\ast \to \sD(c,c)$ in $\sV$.

\begin{warn} In the categorical literature, $\sV$-categories usually refer to the enriched rather
than the internal notion.  In the unbased case, we can use the functor $\bV\colon \bf{Set} \rtarr \sV$ of \autoref{embed} below  to view categories enriched over $(\sV,\times)$ as  special cases of internal ones.
\end{warn}

\begin{exmp}  A $2$-category is a category enriched in $\bf{Cat}$, and its enriched functors
are called $2$-functors. A category internal to $\bf{Cat}$ is a double category, and the internal functors are double functors.
\end{exmp}

\begin{rem}\label{basedcat}
Since $\sV$ has a terminal object,  so does $\bf{Cat}(\sV)$.   It is easily checked that the categories $\bf{Cat}(\sV)_*$ and $\bf{Cat}(\sV_*)$ are canonically isomorphic. 
We shall use the notation $\bf{Cat}(\sV_*)$.
\end{rem}

\begin{rem} A $G\sV$-category is a category internal to $G\sV$.
Thus $G$ acts on both the object of objects and  the object of morphisms via morphisms in $\sV$. One can easily check that $\bf{Cat}(G\sV)$ is canonically isomorphic to $G\bf{Cat}(\sV)$. We are especially
interested in $G\sU$.
\end{rem}

\begin{rem}\label{Vembed}  One reason to require internal $\sV$-categories rather than just enriched ones 
is that it allows us to define an inclusion $i\colon \sV \to \mathbf{Cat}(\sV)$. We simply view an object $X$ 
of $\sV$ as a discrete $\sV$-category $iX$ with $\mathbf{Ob}(iX)=\mathbf{Mor}(iX)=X$, and $S$, $T$, and $I$ all identity maps, and $C$ the canonical isomorphism $X\times _X X \cong X$. It is straightforward to check that $i$ is full and faithful and is left adjoint to the object functor.  Thus
\[ \bf{Cat}(\sV)(iX,\sA) \iso \sV(X,\bf{Ob}\sA). \]
We often omit $i$ from the notation, regarding $\sV$ as a full subcategory of $\mathbf{Cat}(\sV)$. 
\end{rem}

Along with the $\sV$-categories and $\sV$-functors of \autoref{Vcatfun}, we need $\sV$-natural transformations, which we abbreviate to $\sV$-transformations.

\begin{defn}\label{Vtrans} A {\em $\sV$-transformation} $\al\colon f\Longrightarrow g$, where $f$ and $g$ are $\sV$-functors 
$\sA\rtarr \sB$, 
is a map $\al\colon \mathbf{Ob}\, \sA\rtarr \mathbf{Mor}\, \sB$ in $\sV$ such that the following two diagrams commute.
\begin{equation}\label{vtrans1}
\xymatrix{
& \mathbf{Mor}\,\sB \ar[d]^{(S,T)}\\
\mathbf{Ob}\,\sA \ar[r]_-{(f,g)} \ar[ur]^{\al} & \mathbf{Ob}\,\sB\times \mathbf{Ob}\,\sB\\} \end{equation}
\begin{equation}\label{vtrans2} \xymatrix{
\mathbf{Ob}\, \sA\times \mathbf{Mor}\,\sA \ar[d]_{\al\times f}
& \mathbf{Mor}\,\sA \ar[l]_-{(T,\Id)} \ar[r]^-{(\Id,S)}
& \mathbf{Mor}\,\sA\times \mathbf{Ob}\,\sA\ar[d]^{g\times \al}\\
\mathbf{Mor}\,\sB\times_{\mathbf{Ob}\sB} \mathbf{Mor}\,\sB \ar[r]_-{C} & \mathbf{Mor}\,\sB 
& \mathbf{Mor}\,\sB\times_{\mathbf{Ob}\sB} \mathbf{Mor}\,\sB \ar[l]^-{C}\\} \end{equation}
Note that the right down and left down composites do indeed land in the pullback, since $S\com \al\com T = f\com T = T\com f$ and $T\com \al\com S = g\com S = S\com g$. 

The vertical composite $\be\ast \al$ of $\al\colon f\Longrightarrow g$ and 
$\be\colon g\Longrightarrow h$ is the composite
\[\xymatrix@1{
\bf{Ob}\sA \ar[r]^-{(\be, \al)} 
& \bf{Mor}\sB\times_{\bf{Ob}\sB} \bf{Mor}\sB \ar[r]^-C & \bf{Mor}\sB.\\} \]
The identity $\sV$-transformation $\id\colon f \Longrightarrow f$ is given by 
\[f\circ I = I \circ f \colon \bf{Ob}\sA \longrightarrow \bf{Mor}\sB .\]
We say that $\al\colon f \Longrightarrow g$ is an isomorphism, or $\al$ is invertible, if there is a $\sV$-transformation 
$\al^{-1}\colon g\Longrightarrow f$ such that $\al\ast\al^{-1} = \id$ and $\al^{-1}\ast\al = \id$. As in $\bf{Set}$, the condition in \autoref{vtrans2} 
for $\alpha^{-1}$ follows from that for $\alpha$.

The horizontal composite $\beta\com \alpha$ of $\al $ and $\beta$, as in the diagram
\[\xymatrix{
\sA \rtwocell^f_{f'}{\al} & \sB \rtwocell^g_{g'}{\beta} & \sC,
}
\]
is given by the common composite in the commutative diagram
\[ \xymatrix{
\mathbf{Ob}\, \sB\times \mathbf{Mor}\,\sB \ar[d]_{\be\times g}
& \mathbf{Ob}\,\sA \ar[l]_-{(f',\al)} \ar[r]^-{(\al,f)}
& \mathbf{Mor}\,\sB\times \mathbf{Ob}\,\sB\ar[d]^{g'\times \be}\\
\mathbf{Mor}\,\sC\times_{\mathbf{Ob}\sC} \mathbf{Mor}\,\sC \ar[r]_-{C} & \mathbf{Mor}\,\sC 
& \mathbf{Mor}\,\sC\times_{\mathbf{Ob}\sC} \mathbf{Mor}\,\sC \ar[l]^-{C}\\}
\]
In particular, using the same notation as above, the whiskering $\beta \com f$ is given by the composite
\[\xymatrix{
\bf{Ob}\,\sA \ar[r]^{f} & \bf{Ob}\,\sB \ar[r]^{\be} & \bf{Mor}\,\sC,}\]
and similarly, the whiskering $g \com \al$ is given by the composite
\[\xymatrix{
\bf{Ob}\,\sA \ar[r]^{\al} & \bf{Mor}\,\sB \ar[r]^{g} & \bf{Mor}\,\sC.
}
\]
\end{defn} 

\begin{notn}
Let $\sV$ be a category with finite limits. Then the collection of $\sV$-categories, $\sV$-functors, and  $\sV$-transformations forms a 2-category, which we will also denote by  $\mathbf{Cat}(\sV)$, updating the notation of \autoref{Vcatfun}.  In particular, we have the updated  notations $\bf{Cat}(\sV_\ast)$ and $\bf{Cat}(G\sV)$ for the based and equivariant variants
viewed as $2$-categories.

\end{notn}

 \subsection{Chaotic categories}\label{sec:chaotic}
 
We recall the definition of chaotic (or indiscrete) category in the general context of internal categories. 
 
 \begin{defn} A $\sV$-category $\sC$ is said to be {\em chaotic} (or {\em indiscrete}) if the map
\[ \mathbf{Mor}(\sC) \xrightarrow{(S,T)} \bf{Ob}(\sC) \times \bf{Ob}(\sC)\]
is an isomorphism in $\sV$.
\end{defn}

Chaotic $\sV$-categories, despite their simplicity, are important since they lead to natural 
constructions of operads in $\sV$.   An ordinary category $\sA$ is chaotic if each 
$\sA(x,y)$ is a point. For a set $X$ there is a canonical chaotic category $\mathcal{E} X$ with object set $X$.  This is related to other constructions in \cite[\S1]{GMM}. We saw in \autoref{Vembed} that the object functor $\bf{Ob}\colon \bf{Cat}(\sV)\rtarr \sV$ has a left adjoint inclusion functor $i$; the chaotic category functor is right adjoint to $\bf{Ob}$, as we show in \autoref{chaadj} below.  To generalize to $\sV$-categories, we start with the construction of $\mathcal{E} X$.

\begin{defn}\label{chaotic0}  Let $X$ be an object of $\sV$. The {\em chaotic $\sV$-category} $\mathcal{E} X$ has
$\mathbf{Ob}\,\mathcal{E} X = X$ and $\mathbf{Mor}\,\mathcal{E} X= X\times X$. The maps
$S$, $T$, and $I$ are the projections $\pi_2$, $\pi_1$, and the diagonal $\DE$ respectively, and the map $C$ is
\[ \id\times \epz\times \id\colon (X\times X)\times_X(X\times X) \iso X\times X\times X \rtarr X\times X,\]
where $\epz\colon X\rtarr \ast$; that is, $C$ is projection onto the first and third coordinates.  
\end{defn} 

\begin{rem}\label{contract} When $\sV=\bf{Set}$, 
 every object of $\mathcal{E} X$ is initial and terminal, so that $\ast$ is isomorphic to a skeleton of $\mathcal{E} X$. Therefore $B\mathcal{E}X$ is contractible. This also applies when $\sV$ is the category of spaces. 
\end{rem}

\begin{lem}\label{chaadj} The chaotic $\sV$-category functor $\mathcal{E}\colon \sV\rtarr \mathbf{Cat}(\sV)$ is right 
adjoint to the object functor $\mathbf{Ob}$, so that there is a natural isomorphism of sets
\[ \sV(\mathbf{Ob}\, \sA, X) \iso \mathbf{Cat}(\sV)\,(\sA,\mathcal{E} X). \]
Moreover, for any two 
$\sV$-functors $E,F\colon \sA \rtarr \mathcal{E} X$, there exists a unique $\sV$-transformation $\al\colon E\rtarr F$,
necessarily a $\sV$-isomorphism.
\end{lem}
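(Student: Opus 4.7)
The plan is to exhibit the adjunction by constructing an explicit bijection, and to deduce the uniqueness of $\sV$-transformations from the fact that morphisms in $\mathcal{E}X$ are determined by their source and target.

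In one direction, a $\sV$-functor $F\colon \sA\to \mathcal{E}X$ sends to $\mathbf{Ob}(F)\colon \mathbf{Ob}\,\sA\to X$. Conversely, given $f\colon \mathbf{Ob}\,\sA\to X$ in $\sV$, I would define $\widetilde f\colon \sA\to \mathcal{E}X$ by $\widetilde f_{\mathbf{Ob}} = f$ and $\widetilde f_{\mathbf{Mor}} = (f\circ T,\, f\circ S)\colon \mathbf{Mor}\,\sA\to X\times X$. Compatibility of $\widetilde f$ with $S=\pi_2$, $T=\pi_1$, and $I=\DE$ is immediate from projection, and compatibility with $C$ reduces to the fact that $C_{\mathcal{E}X}$ is projection onto the first and third coordinates of $X\times X\times X$: evaluated on a composable pair $(\psi,\phi)$, both sides yield $(f\circ T(\psi),\, f\circ S(\phi))$. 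Conversely, any $\sV$-functor $F\colon \sA\to \mathcal{E}X$ is uniquely determined by $\mathbf{Ob}(F)$, since $(S,T) = (\pi_2,\pi_1)$ is an isomorphism $\mathbf{Mor}\,\mathcal{E}X \cong X\times X$, forcing $F_{\mathbf{Mor}} = (F_{\mathbf{Ob}}\circ T,\, F_{\mathbf{Ob}}\circ S)$. Naturality in $\sA$ and in $X$ follows directly from these same formulas.

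For the second statement, a $\sV$-transformation $\al\colon E\Rightarrow F$ is a map $\al\colon \mathbf{Ob}\,\sA\to X\times X$. Diagram \autoref{vtrans1} reads $(\pi_2,\pi_1)\circ \al = (E_{\mathbf{Ob}}, F_{\mathbf{Ob}})$, which forces $\al = (F_{\mathbf{Ob}}, E_{\mathbf{Ob}})$ uniquely. To finish the existence claim, I would check that this $\al$ satisfies \autoref{vtrans2}: both composites evaluate via $C$ to the same pair $(F_{\mathbf{Ob}}\circ T,\, E_{\mathbf{Ob}}\circ S)\colon \mathbf{Mor}\,\sA\to X\times X$, where I use that a morphism of $\mathcal{E}X$ is literally a (target,\,source) pair and that $C$ picks out the outer two coordinates. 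For invertibility, set $\al^{-1} := (E_{\mathbf{Ob}}, F_{\mathbf{Ob}})$, which is the unique $\sV$-transformation $F\Rightarrow E$ by the same argument; the vertical composites $\al^{-1}\ast \al = C\circ (\al^{-1},\al)$ and $\al\ast \al^{-1}$ collapse via $C$ to $\DE \circ E_{\mathbf{Ob}} = \id_E$ and $\DE\circ F_{\mathbf{Ob}} = \id_F$ respectively.

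The only real bookkeeping subtlety is keeping straight the paper's convention $S = \pi_2$, $T = \pi_1$ on $\mathcal{E}X$; once that is internalized, every verification is a short diagram chase from the projection definitions of $S$, $T$, $I$, and $C$, and no further hypotheses on $\sV$ beyond the existence of finite limits are needed.
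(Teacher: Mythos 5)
Your proposal is correct and follows essentially the same route as the paper: define the $\sV$-functor corresponding to $f$ by $(f\circ T, f\circ S)$ on morphisms, derive uniqueness from the fact that $(S,T)=(\pi_2,\pi_1)$ is an isomorphism on $\mathbf{Mor}\,\mathcal{E}X$, and observe that \autoref{vtrans1} forces $\al=(F,E)$, with \autoref{vtrans2} and invertibility then verified by chasing the projection formulas for $C$. The paper compresses these verifications into ``a little diagram chase,'' so your write-up simply makes explicit the same computations (including the correct handling of the $S=\pi_2$, $T=\pi_1$ convention), and no new ideas or hypotheses are involved.
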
 
\begin{proof} The $\sV$-functor $F\colon \sA\rtarr \mathcal{E} X$ corresponding to
a map $f\colon \mathbf{Ob}\,\sA \rtarr X$ in $\sV$ is given by $f$ on objects and by 
\[ \xymatrix@1{ \mathbf{Mor}\,\sA\ar[r]^-{(T,S)} 
& \mathbf{Ob}\,\sA\times \mathbf{Ob}\,\sA \ar[r]^-{f\times f} & X\times X \\} \]
on morphisms.  Thus $\mathbf{Ob}\, F =f$ by definition, and a little diagram chase shows that
$F$ is the only $\sV$-functor with object map $f$. 

Given $\sV$-functors $E$ and $F$ and a $\sV$-transformation $\al\colon E\Longrightarrow F$, the condition in \autoref{vtrans1} forces $\al=(F,E)$. Again, a small diagram chase shows that $\al$ so defined is indeed a $\sV$-transformation.
\end{proof}

The following result is a reinterpretation of the second statement of \autoref{chaadj}.

\begin{cor}\label{chaotic}
The  \emph{category} of $\sV$-functors and $\sV$-natural transformations from $\sA$ to $\mathcal{E} X$ is isomorphic to the chaotic category on the \emph{set} of $\sV$-maps from $\bf{Ob} \sA$ to $X$.
\end{cor}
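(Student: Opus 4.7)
The plan is to unpack \autoref{chaadj} and observe that it already contains both ingredients required to identify the hom-category in question with a chaotic category on a set. Recall that a chaotic (ordinary) category on a set $S$ is characterized by having object set $S$ together with a unique morphism between every ordered pair of objects. So it suffices to exhibit the object set of the functor category $\mathbf{Cat}(\sV)(\sA,\mathcal{E} X)$ as the set $\sV(\mathbf{Ob}\,\sA, X)$ and to show that for each ordered pair of such functors there is exactly one $\sV$-transformation between them.

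First I would invoke the adjunction part of \autoref{chaadj}: the natural bijection
\[ \sV(\mathbf{Ob}\,\sA, X)\iso \mathbf{Cat}(\sV)(\sA,\mathcal{E} X)\]
identifies the underlying set of objects of the hom-category with $\sV(\mathbf{Ob}\,\sA,X)$. Next I would invoke the second clause of \autoref{chaadj}, which says that for any two $\sV$-functors $E,F\colon \sA \rtarr \mathcal{E} X$ there is a unique $\sV$-transformation $\al\colon E\Longrightarrow F$, and moreover it is a $\sV$-isomorphism. This is exactly the defining property of the chaotic category.

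Finally, since the hom-sets in the functor category $\mathbf{Cat}(\sV)(\sA,\mathcal{E} X)$ are singletons, vertical composition and identities are uniquely determined and there is nothing further to check: the required isomorphism of categories sends a $\sV$-functor $F$ to its underlying object map $\mathbf{Ob}\,F$ and sends the unique transformation between any two $\sV$-functors to the unique morphism between the corresponding pair of elements of $\sV(\mathbf{Ob}\,\sA,X)$. There is really no obstacle to overcome here; the content is packaged entirely in \autoref{chaadj}, and the corollary is just the observation that ``unique $\sV$-transformation between any two $\sV$-functors'' and ``chaotic category on the object set'' are two ways of saying the same thing.
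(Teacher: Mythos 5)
Your proposal is correct and matches the paper exactly: the paper offers no separate argument, stating only that the corollary is ``a reinterpretation of the second statement of \autoref{chaadj},'' which is precisely your reading (the adjunction bijection supplies the object set, the unique-$\sV$-transformation clause supplies chaoticness). Nothing further is needed.
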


Note that the counit $\bf{Ob}\com \mathcal{E}\rtarr \Id$ of the adjunction is the identity. 

\begin{lem} The unit map $\sA\rtarr \mathbf{\mathcal{E}}(\mathbf{Ob}\sA)$ of the adjunction is an isomorphism if and only if the $\sV$-category $\sA$ is chaotic.
\end{lem}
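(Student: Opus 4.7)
The plan is straightforward: identify the unit map concretely using the description of the adjunction obtained in the proof of \autoref{chaadj}, and then observe that being an isomorphism of $\sV$-categories is detected componentwise on objects and morphisms.

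First I would unpack the unit $\eta_\sA \colon \sA \rtarr \mathcal{E}(\mathbf{Ob}\,\sA)$. By the adjunction of \autoref{chaadj}, $\eta_\sA$ corresponds to the identity map $\id\colon \mathbf{Ob}\,\sA \rtarr \mathbf{Ob}\,\sA$ in $\sV$. Reading off the explicit formula given in the proof of that lemma, $\eta_\sA$ is the identity on objects and, on morphisms, is given by
\[
(T,S)\colon \mathbf{Mor}\,\sA \rtarr \mathbf{Ob}\,\sA\times \mathbf{Ob}\,\sA.
\]

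Next I would note the general fact that a $\sV$-functor $F\colon \sB\rtarr \sC$ is an isomorphism in $\mathbf{Cat}(\sV)$ if and only if both $\mathbf{Ob}\,F$ and $\mathbf{Mor}\,F$ are isomorphisms in $\sV$. The ``only if'' direction is immediate since $\mathbf{Ob}$ and $\mathbf{Mor}$ are functors. For the ``if'' direction, if both components are isomorphisms in $\sV$, then their inverses automatically commute with $S$, $T$, $I$, and $C$ (obtained by conjugating the structural equations of $F$ with the component inverses), yielding an inverse $\sV$-functor.

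Applying this to $\eta_\sA$: the object component $\id_{\mathbf{Ob}\,\sA}$ is always an isomorphism, so $\eta_\sA$ is an isomorphism if and only if its morphism component $(T,S)$ is an isomorphism in $\sV$. Since $(T,S)$ differs from $(S,T)$ only by the swap isomorphism on $\mathbf{Ob}\,\sA\times\mathbf{Ob}\,\sA$, this holds if and only if $(S,T)$ is an isomorphism, which is precisely the defining condition for $\sA$ to be chaotic. There is no real obstacle here; the only point to be careful about is making sure that the unit really is $(T,S)$ rather than $(S,T)$ on morphisms, which comes from the convention in \autoref{chaotic0} that $S=\pi_2$ and $T=\pi_1$ in $\mathcal{E} X$.
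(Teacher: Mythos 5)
Your proof is correct: the paper states this lemma without proof, treating it as immediate from \autoref{chaadj}, and your argument---reading off from that proof that the unit is the identity on objects and $(T,S)$ on morphisms, then using that a $\sV$-functor is an isomorphism in $\mathbf{Cat}(\sV)$ exactly when both components are isomorphisms in $\sV$---is precisely the intended verification. Your care about $(T,S)$ versus $(S,T)$, which differ only by the swap isomorphism and hence do not affect invertibility, is the one point worth spelling out, and you handled it correctly via the conventions $S=\pi_2$, $T=\pi_1$ of \autoref{chaotic0}.
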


As a right adjoint, the chaotic category functor preserves products and other limits and
therefore preserves all structures defined in terms of those operations. We can view it 
as an especially elementary form of categorification. 

\subsection{The embedding of $\bf{Set}$ in $\sV$}\label{embed}

Many operads and other constructions are first defined in the category $\bf{Set}$.  In the unbased case, assuming that $\sV$ has coproducts in addition to finite limits, we can use the following definition to lift such constructions to $\sV$. 

\begin{defn}\label{bUbV}   Define $\bV\colon \bf{Set} \rtarr \sV$ to be the functor that sends a set 
$S$ to $\coprod_{s\in S} \ast$, the coproduct of copies of the terminal object $\ast$ indexed on $S$. 
It has a right adjoint $\bU\colon \sV\rtarr \mathbf{Set}$ specified by letting $\bU X = \sV(\ast,X)$.  Thus
\begin{equation}\label{trivadj1}
  \sV(\bV S,X) \iso \mathbf{Set}(S,\bU X).
\end{equation}
\end{defn}

\begin{rem}\label{set}
In all of the unbased examples of interest, the unit map $\Id \rtarr \bU\bV$ of the adjunction is an isomorphism. This expresses the intuition that a map from a point into a disjoint union of points is the same as a choice of one of the points. It ensures that $\bV$ is  a full and faithful functor.  Henceforward, in the unbased case, we assume this and thus regard $\mathbf{Set}$ as a subcategory of $\sV$, omitting $\bV$ from the notation.
\end{rem}

\begin{rem}
When the unit $\Id \rtarr \bU\bV$ of the adjunction between $\bf{Sets}$ and $\sV$ is an isomorphism, the adjunction lifts to an adjunction between $\bf{Sets}_*$ and $\sV_*$. Indeed, we define $\bV$ of a set $S$ with basepoint $s_0$ to be the based object
\[\ast \cong \bV(\ast) \xrightarrow{\bV(s_0)} \bV S,\]
and similarly, we define $\bU$ of a based object $(X,x_0)$ in $\sV$ to be
\[\ast \cong \bU\bV(\ast)\cong \bU(\ast)\xrightarrow{\bU(x_0)} \bU X.\]
The unit and the counit of the original adjunction then become based maps, giving the desired adjunction.
\end{rem}

\begin{defn}\label{GbUbV} The adjunction between $\bf{Sets}$ and $\sV$ also lifts to the equivariant setting in the following way. 
Define $\bV\colon G\bf{Set} \rtarr G\sV$ to be the functor that sends a $G$-set $S$ to the object $\bV S$ in $\sV$ with the action of $G$ induced by the functoriality of $\bV$ applied to the maps of sets $g\colon S\rtarr S$ for $g\in G$.  Thinking of the action by $G$ on an object $X$ of $G\sV$ as given by a map $\bV G\times X\rtarr X$ in $\sV$ and applying $\bU$, we obtain an action of 
$G$ on $\bU X$.  This gives a forgetful functor $\bU \colon G\sV\rtarr G\bf{Set}$ that is right adjoint 
to $\bV$.  Thus
\begin{equation}\label{trivadj2}
  G\sV(\bV S,X) \iso G\mathbf{Set}(S,\bU X).
\end{equation}
\end{defn}

The following remark applies equally well in the nonequivariant and equivariant contexts.

\begin{rem}\label{finitelim}  As a left adjoint, $\bV$ preserves colimits.  To ensure that $\bV$ preserves operads and other structure in $\bf{Set}$, we assume henceforward that $\bV$ also preserves finite limits.  As we explain in the brief \autoref{AppRubin}, which was provided to us by Jonathan Rubin, 
this is a very mild assumption that holds in all of our unbased examples. The assumption ensures that the adjunction $(\bV,\bU)$, when applied to objects and morphisms, induces an adjunction
\begin{equation}\label{trivadj3} 
\mathbf{Cat}(\sV)(\bV \sA,\sB) \iso \mathbf{Cat}(\sA,\bU \sB),
\end{equation}
where $\sA$ is a category and $\sB$ is a $\sV$-category. The functor $\bV\colon \mathbf{Cat}\rtarr \mathbf{Cat}(\sV)$ is again full and faithful, and we regard $\mathbf{Cat}$ as a subcategory of $\mathbf{Cat}(\sV)$,  omitting $\bV$ from the notation. 
\end{rem}

We end this section by noting that using the functor $\bV$ and assuming that $\sV$ is cartesian closed, one can see that $\sV$-transformations can be thought of as analogues of homotopies. 
Let $\sI$ be the category with objects $[0]$ and $[1]$ and a unique 
non-identity morphism $I\colon [0]\rtarr [1]$, and consider it as a $\sV$-category via the functor $\bV$. For $\sV$-functors $f,g\colon \sA\rtarr \sB$, 
there is a bijection between $\sV$-transformations from $f$ to $g$ and  $\sV$-functors 
$h\colon \sA\times \sI \rtarr \sB$ that restrict to $f$ on $\sA\times [0]$ and to $g$ on $\sA\times [1]$.
Indeed, given $\al\colon \mathbf{Ob}\, \sA\rtarr \mathbf{Mor}\, \sB$, we define $h\colon 
\sA\times \sI \rtarr \sB$ on objects as
\[\mathbf{Ob}\, (\sA \times \sI)=\bf{Ob}\sA \times \coprod_{\{[0],[1]\}} \! \!* \cong \coprod_{\{[0],[1]\} } \mathbf{Ob}\,\sA \xrightarrow{\coprod \al} \coprod_{\{[0],[1]\} }\mathbf{Mor}\,\sB \xrightarrow{S,T} \mathbf{Ob}\,\sB.\]
On morphisms, $h$ is given by the $\sV$-functor
\[\mathbf{Mor}\,(\sA \times \sI)=\bf{Mor}\sA \times \coprod_{\{\id_0,\id_1,I\}} \!\!\! \!*  \cong \coprod_{\{\id_0, \id_1, I\}} \mathbf{Mor}\,\sA \rtarr \mathbf{Mor}\,\sB\]
specified on the three components of the coproduct by $f$, $g$ and the common composite in (\ref{vtrans2}), respectively. We leave it to the reader to check that this assignment is a bijection. 

\begin{rem}\label{hom} Taking $\sV = G\sU$, taking $\sO$ to be an $E_{\infty}$ $G$-operad in $\bf{Cat}(G\sU)$, and using that the classifying space functor $B$ preserves products and takes $\sI$ to the unit interval, we can use our infinite loop space machinery \cite{GMMO, MMO}, in particular \cite[Proposition 6.16]{GMMO}, to transport  
$G\sU$-transformations between strict maps of $\sO$-algebras to homotopies between maps of $G$-spectra. 
\end{rem}

\section{Pseudoalgebras over operads and $2$-monads}\label{pseudo}

\subsection{Pseudoalgebras over $2$-Monads}\label{Monpseudo}

\begin{defn}  A {\em $2$-monad} on a $2$-category $\sK$ is a $\bf{Cat}$-enriched monad in $\sK$.  
Precisely, it is a
$2$-functor $\bT\colon \sK\rtarr \sK$ together with $2$-natural transformations $\io\colon I\rtarr \bT$ 
and $\mu\colon \bT\bT\rtarr \bT$ satisfying the evident unit and associativity laws:
the following diagrams of $2$-natural transformations must commute.
\[ \xymatrix{
\bT \ar[r]^-{\io \bT} \ar@{=}[dr]& \bT^2 \ar[d]^{\mu} & \bT \ar[l]_-{\bT\io} \ar@{=}[dl]\\
& \bT & \\}
\ \ \ 
\xymatrix{ 
\bT^3 \ar[r]^{\mu \bT} \ar[d]_{\bT \mu} & \bT^2 \ar[d]^{\mu} \\
\bT^2 \ar[r]_-{\mu} & \bT\\} \]
\end{defn}

\begin{defn}\label{Tpseudo} A {\em (strict) $\bT$-algebra} $(X,\tha)$ is an object $X$ of $\sK$ together
with an action $1$-cell $\tha \colon \bT X \rtarr X$ such that the following diagrams 
commute. 
\[ \xymatrix{
X \ar[d]_-{\io_X } \ar@{=}[dr]\\
 \bT X \ar[r]_-{\tha} & X  \\}
\qquad
\xymatrix{ 
\bT^2 X \ar[r]^{ \bT \tha} \ar[d]_{\mu_X} & \bT X \ar[d]^{\tha} \\
\bT X \ar[r]_-{\tha} & X.\\} \]
In particular, $\bT X$ is a $\bT$-algebra with action map $\mu$ for any $X\in \sK$. 

A $\bT$-pseudoalgebra 
$(X,\tha,\varphi,\upsilon)$ requires the same two diagrams to commute up to invertible $2$-cells
\[ \upsilon\colon \id \Longrightarrow \tha\com \io_X\ \ \text{and}\ \ \varphi\colon \tha\com T\tha 
\Longrightarrow \tha\com \mu_X, \]
satisfying three coherence axioms (\cite[2.4]{Power}).
One defines lax $\bT$-algebras similarly, but not requiring $\upsilon$ and $\varphi$ to be invertible. We shall not consider them. 

A $\bT$-pseudoalgebra is {\em normal} if the first diagram commutes, so that $\upsilon$ is the identity.  We restrict attention to {\em normal} pseudoalgebras henceforward.\footnote{This is dictated by our preferred definitions when we turn to operads.  See \autoref{unitax}.}   With this restriction, the first two coherence axioms translate to requiring that the whiskerings $\varphi\com \io_{\bT X}$ and $\varphi\com \bT\io_X$ are both the identity transformation $\tha \Longrightarrow \tha$. The remaining coherence axiom requires the  equality of diagrams
\[ \xymatrixcolsep{.5cm}\xymatrixrowsep{.5cm}\xymatrix{
\bT^3 X\ar[rr]^-{\bT^2 \tha} \ar[dd]_-\mu &&  \bT^2 X\ar[dd]_-\mu \ar[dr]^-{\bT\tha}  & \\
&&  \drtwocell<\omit>{<0> \, \varphi} &  \bT X  \ar[dd]^-\tha\\
 \bT^2 X \ar[rr]^-{\bT \tha} \ar[dr]_-\mu & \drtwocell<\omit>{<0> \, \varphi}  &   \bT X \ar[dr]^-\tha  &\\
&  \bT X \ar[rr]_-\tha && X 
}  \ \ \ \ \ \ \  \xymatrix{&\\&\\=\\&}   \xymatrix{
\bT^3 X\ar[rr]^-{\bT^2 \tha} \ar[dd]_-\mu  \ar[dr]_-{\bT \mu} &  \drtwocell<\omit>{<0> \,\, \bT\varphi} & \bT^2 X \ar[dr]^-{\bT \tha}  &\\
& \bT^2 X  \ddrrtwocell<\omit>{<0> \, \varphi}   \ar[dd]^-\mu \ar[rr]^-{\bT \tha} &&  \bT X \ar[dd]^-\tha \\
\bT^2 X \ar[dr]_-\mu &&&\\
& \bT X\ar[rr]_-\tha && X
}\]
\end{defn} 

\begin{defn}\label{Tpseudo2} A {\em $\bT$-pseudomorphism} $(f,\ze)\colon (X,\tha,\varphi)\rtarr (Y,\xi,\psi)$ of $\bT$-pseudo\-algebras is given by a $1$-cell $f\colon  X\rtarr Y$ and an invertible $2$-cell 
$\ze\colon \xi \com \bT f\Longrightarrow f\com \tha$.
\[ \xymatrix{
\bT X \ar[r]^-{\bT f} \ar[d]_{\tha} \drtwocell<\omit>{<0> \, \ze} & \bT Y \ar[d]^{\xi}\\
X \ar[r]_-{f} & Y\\} \]
satisfying two coherence axioms (\cite[2.5]{Power}).
If $\ze$ is the identity, $f$ is said to be a strict $\bT$-map.  One defines lax $\bT$-maps by not requiring $\ze$ to be invertible, but we shall not consider those. 

Restricting $X$ and $Y$ to be normal, we require the whiskering $\ze \com \io_X$ to be the 
identity transformation $f \Longrightarrow f$.  This makes sense since the naturality of $\io$ and 
the normality equalities $\tha\com \io_X = \id_X$ and $\xi\com \io_Y = \id_Y$ show that the domain and target of $\ze\com \io_X$ are both $f$. 
There is then only one remaining coherence axiom.  It requires the equality of diagrams
\[ 
   \xymatrixcolsep{.5cm}\xymatrixrowsep{.5cm}\xymatrix{
\bT^2 X\ar[rr]^-{\bT^2 f} \ar[dd]_-\mu  \ar[dr]_-{\bT \tha} &  \drtwocell<\omit>{<0> \,\, \bT\ze} & \bT^2 Y \ar[dr]^-{\bT \xi}  &\\
 \drtwocell<\omit>{<0> \,\, \varphi}  & \bT X  \ddrrtwocell<\omit>{<0> \, \ze}   \ar[dd]^-\tha \ar[rr]_-{\bT f} &&  \bT Y \ar[dd]^-\xi \\
\bT X \ar[dr]_-\tha &&&\\
& X\ar[rr]_-f && Y
}  \ \ \ \ \ \ \  \xymatrix{&\\&\\=\\&}
\xymatrix{
\bT^2 X\ar[rr]^-{\bT^2 f} \ar[dd]_-\mu &&  \bT^2 Y\ar[dd]_-\mu \ar[dr]^-{\bT\xi}  & \\
&&  \drtwocell<\omit>{<0> \, \psi} &  \bT Y  \ar[dd]^-\xi\\
 \bT X \ar[rr]^-{\bT f} \ar[dr]_-\tha & \drtwocell<\omit>{<0> \, \ze}  &   \bT Y \ar[dr]^-\xi  &\\
&  X \ar[rr]_-f&& Y 
}\]
\end{defn}

\begin{defn}\label{Tpseudo3}  An {\em algebra 2-cell}
$\la \colon (f,\ze)\Longrightarrow (g,\ka)$ 
is given by a $2$-cell $\la\colon f\Longrightarrow g$ in $\sK$, not necessarily invertible, such that
\[ \xymatrixrowsep{.5cm}\xymatrix{
\bT X  \rtwocell^{\bT f}_{\bT g}{<0> \, \bT\la}  \ar[dd]_{\tha}  \ddrtwocell<\omit>{<.5> \, \ka}& \bT Y \ar[dd]^{\xi} 
& & \bT X \ar[dd]_{\tha}   \ar[r]^{\bT f}    \ddrtwocell<\omit>{<-.5> \,\, \ze} & \bT Y \ar[dd]^-{\xi} \\
&&=\\
X \ar[r]_g & Y & & X \rtwocell^{f}_{g}{<0> \, \la}  &  Y \\}
\]
\end{defn}

With these definitions, we have the three $2$-categories $\tpa$, $\etp$, and $\ets$ promised in the introduction.

\subsection{The $2$-monads associated to operads}\label{OpMon} 

To construct a monad from an operad, we must assume that $\sV$ and  $\bf{Cat}(\sV)$ have colimits in addition to having finite limits.  The construction of the monad associated to an operad requires equivariance and base object identifications, which are examples of colimits.  Since colimits of categories are often notoriously ill-behaved,  we offer a philosophical comment on how we use the $2$-monads associated to operads in topology.

\begin{rem}
We are interested in $\sO$-$G$-categories $\sX$ and their classifying $G$-spaces $X = B\sX$.   No
monads need play any role in the statements of the theorems we are proving about them, but we are using $2$-monads on categories of $G$-categories for the proofs.  With some exceptions, we neither know nor care about any commutation properties of $B$ relating these $2$-monads to monads on categories of $G$-spaces.   Such relations would be suspect since we cannot expect the relevant colimits to commute with $B$.   That is, we are using $2$-monads purely formally to obtain information about the underlying categories of $\sO$-$G$-algebras. 
\end{rem} 

Operads are defined in any symmetric monoidal category and in particular in any cartesian monoidal category.  An operad $\sO$ in $\mathbf{Cat}(\sV)$ consists of $\sV$-categories 
$\sO(j)$ for $j\geq 0$ with right actions of the symmetric groups $\SI_j$,  a unit $\sV$-functor  $1\colon \ast \rtarr \sO(1)$, 
where $\ast$ is the trivial $\sV$-category, and structure $\sV$-functors 
\[ \ga\colon \sO(k)\times \sO(j_1) \times \cdots \times \sO(j_k) \rtarr \sO(j_1 + \cdots + j_k)  \]
that are equivariant, unital, and associative in the sense that is prescribed in \cite[Definition 1.1]{MayGeo}.  

\begin{ass}\label{reduced}
We assume throughout that operads $\sO$ are taken to be reduced operads in $\mathbf{Cat}(\sV)$.    
Reduced means that $\sO(0)$ is the terminal object $\ast$, so that an $\sO$-algebra $\sA$ has a base object $0$, 
namely the image of $\ast$ under the action. We write $0$ for the identity $\sV$-functor $\ast\rtarr \sO(0)$.
\end{ass} 

For the most useful contexts, we must also assume that $\sO$ is $\SI$-free, meaning that the symmetric group $\SI_j$ acts freely on the $j$th object $\sO(j)$ for all $j$, but we do not restrict to $\SI$-free operads in this paper.

We shall be especially interested in chaotic operads.

\begin{defn}  An operad $\sO$ in $\mathbf{Cat}(\sV)$ is {\em chaotic} if each of its $\sV$-categories $\sO(n)$ is 
chaotic.
\end{defn}

We will shortly define strict algebras and pseudoalgebras over an operad in $\bf{Cat}(\sV)$. For an operad $\sO$ in any symmetric monoidal category $(\sW, \otimes)$, we have an isomorphism of categories between (strict) $\sO$-algebras and $\bO_+$-algebras, where $\bO_+$ is the monad on $\sW$ that is constructed from 
$\sO$ by defining
\begin{equation}\label{OMonad}  \bO_+ X = \coprod_{n\geq 0} \sO(n) \otimes_{\SI_n} X^{\otimes n}.
\end{equation}
Note that $\SI_n$ acts on the right of $\sO(n)$ and on the left of $X^{\otimes n}$.  Intuitively, we are identifying 
$a\rh\otimes x$ with $a\otimes \rh x$ for $\si\in \SI_n$ and elements $a\in \sO(n)$ and $x\in X^{\otimes n}$. 

As explained in \cite[\S4]{Rant1}, if $\sW$ is cartesian monoidal and $\sO$ is reduced,
there is a monad $\bO$ on $\sW_*$ whose (strict) algebras are the same as those of $\bO_+$. The difference is that $\bO_+$-algebras acquire  base objects via their actions, whereas $\bO$-algebras have preassigned base objects that must agree with those assigned by their actions; $\bO$ is constructed  from $\bO_+$ using base object identifications. We can adjoin disjoint base objects by taking 
$X_+ = X\amalg \ast$, and then $\bO_+(X) = \bO(X_+)$.   In all topological applications, the monad $\bO$ is of considerably greater interest than the monad $\bO_+$, and we shall restrict attention to it.

We need a preliminary definition to define $\bO$ in our context. 

\begin{defn}\label{degen}
Let $\sO$ be an operad in $\bf{Cat}(\sV)$ and let $\sA$ be a based $\sV$-category. In line with \autoref{reduced}, let $0$ denote the base object  of $\sA$.  Let $1\leq r\leq n$.  Define
$\si_r\colon \sO(n) \rtarr \sO(n-1)$ to be the composite $\sV$-functor 
\begin{equation}\label{degen1} 
\xymatrix{
   \sO(n) \cong \sO(n)\times \ast^n \ar[d]^{\id \times 1^{r-1}\times 0 \times 1^{n-r}}\\
  \sO(n)\times \sO(1)^{r-1}\times \sO(0) \times \sO(1)^{n-r} \ar[d]^{\ga}\\
 \sO(n-1).}
\end{equation}
Define $\si_r \colon \sA^{n-1}\rtarr \sA^n$ to be the insertion of  base object $\sV$-functor
\begin{equation}\label{degen2}
\si_r = \id^{r-1}\times 0 \times \id^{n-r}\colon \sA^{n-1}\rtarr \sA^n.
\end{equation}
\end{defn}

\begin{con}\label{OMon}  
Let $\sO$ be a (reduced) operad in $\bf{Cat}(\sV)$.  We construct a 2-monad $\bO$ in the $2$-category 
$\bf{Cat}(\sV_*)$ of based $\sV$-categories. Let $\LA$ be the subcategory of
injections and permutations in the category of finite based sets $\mathbf{n}$.  Then $\sO$ is a contravariant functor on $\LA$ via the 
symmetric group actions and the degeneracy functors $\si_r$.  For a based
$\sV$-category $\sA$, the powers $\sA^n$ give a covariant functor $\sA^{\bullet}$ on $\LA$ via permutations and the insertions of base object functors $\si_r$. Define
\begin{equation}\label{Omon} \bO(\sA) =  \sO\otimes_{\LA} \sA^{\bullet},
\end{equation}
 where $\otimes_{\LA}$ denotes a tensor product of functors, as defined in \cite[\S~IX.6]{CatWork}.
The unit $\io\colon \sA\rtarr \bO \sA$ is induced by the $\sV$-map 
$\ast \rtarr \sO(1)$ determined by $\id\in \sO(1)$ and the product $\mu\colon \bO^2\rtarr \bO$ 
is induced by the structural maps $\ga$ of the operad. 
\end{con} 

\subsection{Pseudoalgebras over operads}\label{Oppseudo}
We define pseudoalgebras over an operad $\sO$ in $\mathbf{Cat}(\sV)$, 
largely following Corner and Gurski \cite{CG}.\footnote{They only consider $\sV = \mathbf{Set}$, but the generalization is immediate.}  The definition can be extended to operads in any 2-category with products.

\begin{defn}\label{Opseudo} An {\em $\sO$-pseudoalgebra} $\sA = (\sA,\tha,\varphi)$ is a $\sV$-category $\sA$ together with action $\sV$-functors 
\[ \tha = \tha_n \colon \sO(n) \times \sA^n\rtarr \sA \]
and invertible composition $\sV$-transformations $\varphi = \varphi(n;m_1,\cdots,m_n)$ 
\begin{equation}\label{cohere}
 \xymatrix{ \sO(n)\times \left(\prod_{r} \sO(m_r)\times \sA^{m_{r}}\right) \ar[dd]_{\pi} \ar[rr]^-{\id\times (\prod_r\tha_{m_r})}  
\ddrrtwocell<\omit>{<0>   \varphi} 
& &
\sO(n)\times \sA^n \ar[dr]^{\tha_n }  &\\
& & &\sA\\
\sO(n)\times \left(\prod_r{\sO(m_r)}\right) \times \sA^{m} \ar[rr]_-{\ga\times \id} & &\sO(m)\times \sA^m. \ar[ur]_{\tha_m}\\}
\end{equation}

\noindent
Here $1\leq r\leq n$, $m=m_1 +\cdots + m_n$, and $\pi$ is the shuffle that moves the 
variables $\sO(m_r)$ to the left and identifies $\sA^{m_1}\times \cdots \times \sA^{m_n}$ with $\sA^m$.
These data must satisfy the following axioms.  When we say that an instance of (\ref{cohere}) commutes, we mean that the corresponding component of $\varphi$ is the identity.

\begin{ax}[Equivariance] The following diagram commutes for $\rh\in \SI_n$.
\[ \xymatrix{ 
\sO(n) \times \sA^{n} \ar[d]_-{\rh\times\id} \ar[r]^{\id\times \rh} & \sO(n) 
\times \sA^{n} \ar[d]^{\tha_n } \\
\sO(n) \times \sA^{n} \ar[r]_-{\tha_n } & \sA\\} \]
This means that the $\tha$ induce a map $\tha\colon \bO_+\sA \rtarr \sA$.
\end{ax}

\begin{ax} [Unit Object]\label{unitax}  For $1\leq r\leq n$, the following whiskering of an instance of the diagram \autoref{cohere}
commutes. That is, the whiskering of $\varphi$ along the composite of the  first map of \autoref{degen1} and an instance of $\pi^{-1}$ is the identity 2-cell, giving the following commutative diagram.
\[ \xymatrix{ 
\sO(n) \times \sA^{n-1} \ar[d]_-{\si_r\times\id} \ar[r]^{\id\times \si_r} & \sO(n) \times \sA^{n} \ar[d]^{\tha_n } \\
\sO(n-1) \times \sA^{n-1} \ar[r]_-{\tha_{n-1}} & \sA\\} \] 
This means that the $\tha$ induce a map $\tha\colon \bO \sA \rtarr \sA$.
\end{ax} 

\begin{ax}\label{opid}[Operadic Identity]
The following diagram commutes.
\[ \xymatrix{
\ast \times \sA \ar[dr]_{\iso} \ar[r]^-{1\times \id} & \sO(1)\times \sA \ar[d]^{\tha_{1}} \\
& \sA \\}\] 
\end{ax}

We require coherence axioms for the $\sV$-transformations $\varphi$. These are dictated by compatibility with the monadic axioms in \S\ref{Monpseudo} and we use those to abbreviate the statements of the operadic axioms. 

\begin{ax}\label{Pone} [Equivariance] When the diagram \autoref{cohere} is obtained from another such diagram by precomposing with a permutation, we require $\varphi$ to be the whiskering of $\varphi$ in the original diagram by the permutation. Precisely, given $\rho\in \Sigma_n$ and $\ta_r\in \Sigma_{m_r}$, there are equalities of whiskerings
\[\varphi(n;m_1,\dots,m_r)=\varphi(n;m_{\rho(1)},\dots,m_{\rho(n)})  \com (\rho \times \rho^{-1})\]
and
\[\varphi(n;m_1,\dots,m_r) =\varphi(n;m_1,\dots,m_r)  \com \left(\id \times \prod_r(\ta_r \times \ta_r^{-1})\right).\]  
This means that the $\varphi$ pass to orbits to define an invertible $2$-cell, 
which we also denote by $\varphi$,
in the diagram
\[ \xymatrix{
\bO^2_+\sA  \ar[r]^-{\bT \tha} \ar[d]_-{\mu} \drtwocell<\omit>{<0> \, \varphi}  &  \bO_+\sA  \ar[d]^{\tha}\\
\bO_+ \sA \ar[r]_{\tha} & \sA.}
\] 
\end{ax}

Using the unit object axiom, it follows that $\varphi$ then passes through base object identifications to define
an invertible $2$-cell, which we again call $\varphi$, in the diagram
\begin{equation}\label{Opvarphi}
 \xymatrix{
\bO^2\sA  \ar[r]^-{\bT \tha} \ar[d]_-{\mu} \drtwocell<\omit>{<0> \, \varphi}  &  \bO\sA  \ar[d]^{\tha}\\
\bO \sA \ar[r]_{\tha} & \sA.}
\end{equation}

\begin{ax}\label{Pthree} [Operadic Identity]  The whiskering of $\varphi(1;n)$ along 
\[  1\times \id\colon \sO(n) \times \sA^n \rtarr \sO(1) \times  \sO(n) \times \sA^n \]
is the identity, and the whiskering of $\varphi(n;1^n)$ along 
\[ \id\times (1\times \id)^n\colon \sO(n)\times \sA^n\rtarr \sO(n)\times (\sO(1)\times \sA)^n \]
is the identity.
\end{ax}

\begin{ax}\label{Pfour} [Operadic Composition] 
Writing $\mu = (\gamma \times \id)\com \pi$, $m=\sum_r m_r$, $p_r=\sum_s p_{rs}$, and $p=\sum_{r,s} p_{rs}$,
we require the following two pasting diagrams to be equal.
{\footnotesize{
\[ \xymatrix@C=1.2em{
\sO(n)\times \prod_r\big(\sO(m_r)\times \prod_s(\sO(p_{rs}) \times \sA^{p_{rs}})\big) \ar[dd]_-{\mu} \ar[rr]^-{\id \times \prod_r(\id \times \prod_s\tha_{p_{rs}})} 
& & \sO(n)\times \prod_r(\sO(m_r)\times \sA^{m_r}) \ar[dd]_-{ \mu} \ar[dr]^(.65)*+<1ex>{^{\id \times\prod_r \tha_{m_r}}}  & \\
  & &  \drtwocell<\omit>{<0> \, \varphi}  & \sO(n)\times \sA^n \ar[dd]^-{\tha_n}\\
\sO(m)\times \prod_{r,s}(\sO(p_{rs})\times \sA^{p_{rs}}) \ar[dr]_-{\mu} \ar[rr]^-{\id \times \prod_{r,s} \tha_{p_{rs}} }  
& \drtwocell<\omit>{<0> \, \varphi}  &  \sO(m)\times \sA^m \ar[dr]^-{\tha_{m}} & \\ 
 & \sO(p)\times \sA^p \ar[rr]_-{\tha_p} &    & \sA} 
\] 
\[\xymatrixcolsep{-.3cm}\xymatrix{
\sO(n)\times \prod_r\big(\sO(m_r)\times \prod_s(\sO(p_{rs}) \times \sA^{p_{rs}})\big)  \ar[dd]_-{\mu} \ar[rr]^-{\id \times \prod_r(\id \times \prod_s\tha_{p_{rs}})} \ar[dr]^-{\id \times \mu}   
& \drtwocell<\omit>{ \qquad  \id \times \varphi} & \sO(n)\times \prod_r(\sO(m_r)\times \sA^{m_r})  \ar[dr]^(.65)*+<1ex>{^{\id \times\prod_r \tha_{m_r}} }
& \\
 & \sO(n)\times \prod_r (\sO(p_r)\times \sA^{p_r})  \ar[dd]_-{\mu} \ar[rr]_-{\id \times \prod_r \tha} \ddrrtwocell<\omit>{<0> \, \varphi}  &  &  \ \sO(n)\times \sA^n\ \ \ \ \ \   \ar[dd]^-{\tha_n}\\
 \sO(m)\times \prod_{r,s}(\sO(p_{rs})\times \sA^{p_{rs}}) \ar[dr]_-{\mu} 
&  & &   \\
 & \sO(p)\times \sA^p \ar[rr]_-{\tha_p} & & \sA   .
}\]
}}
This axiom is the translation of the equality of pasting diagrams specified in \autoref{Tpseudo}.
\end{ax}

If the transformations $\varphi$ are all the identity, then all axioms are satisfied automatically, and $\sA$ is a (strict) $\sO$-algebra as originally defined in \cite[\S1]{MayGeo}. 
\end{defn}

It is clear from the definition that $\sA$ is an $\sO$-pseudoalgebra if and only if it is a normal 
$\bO$-pseudoalgebra. The two Operadic Identity properties are precisely what is needed to
give the normality.

\begin{defn}\label{Opseudo2} An {\em $\sO$-pseudomorphism}  $(f,\ze)\colon (\sA, \tha, \varphi)$ and $(\sB,\xi,\ps)$ of $\sO$-pseudo\-algebras is given by a $\sV$-functor $f\colon \sA\rtarr \sB$  and a sequence of
 invertible $\sV$-transformations $\ze_n$ 
\[\xymatrix{   \sO(n)\times \sA^n \ar[d]_{\tha_n } \ar[r]^{\id\times f^n}  \drtwocell<\omit>{<0> \, \ze_n} & \sO(n)\times \sB^n \ar[d]^{\tha_n } \\
     \sA \ar[r]_-{f} & \sB. \\} \]
We require $f$ to preserve $0$ and $1$, so that $\ze_0$ and the whiskering of $\ze_1$ with the map $1\times \id\colon \sA \cong\ast\times \sA \rtarr \sO(1)\times \sA$ are the identity.   Then $f$ is a based map, and hence it induces  a map $\bO f \colon \bO A \rtarr \bO B$. We moreover require 
\[\ze_n=\ze_n\com (\rho \times \rho^{-1})\]
for all $\rho\in \Sigma_n$. This implies that $\ze$ induces an invertible $\sV$-transformation
\[ \xymatrix{
\bO X \ar[r]^-{\bO f} \ar[d]_{\tha} \drtwocell<\omit>{<0> \, \ze} & \bO Y \ar[d]^{\xi}\\
X \ar[r]_-{f} & Y.\\} \]
We require the following two pasting diagrams to be equal.

{\footnotesize
\[ \xymatrix @C=1em{
\sO(n) \times \prod_r(\sO(m_r)\times \sA^{m_r}) \ar[rr]^-{\id \times \prod_r(\id \times f^{m_r})} \ar[dd]_{\mu} \ar[dr]^-{\id\times \prod \tha_{m_r}} &  \drtwocell<\omit>{<-.5>   \id\times\prod\ze_{m_r}\qquad \qquad \qquad} & \sO(n) \times \prod_r(\sO(m_r)\times \sB^{m_r}) \ar[dr]^{\id\times\prod\xi_{m_r}} & \\
 \drtwocell<\omit>{<0> \, \varphi} & \sO(n)\times \sA^n \ar[rr]^-{\id \times f^n} \ar[dd]_{\tha_m}  \ddrrtwocell<\omit>{<0> \, \ze_n} & & \ \sO(n)\times \sB^n\ \ \ \ \  \ \ \ar[dd]^{\xi_n} \\ 
\sO(m) \times \sA^m \ar[dr]_{\tha_m} & \\
 & \sA  \ar[rr]_{f} & & \sB} 
\]
\[ \xymatrix{
\sO(n) \times \prod_r(\sO(m_r)\times \sA^{m_r}) \ar[rr]^-{\id \times \prod_r(\id \times f^{m_r})}  \ar[dd]_{\mu} & & \sO(n) \times \prod_r(\sO(m_r)\times \sB^{m_r}) \ar[dr]^{\id\times\prod\xi_{m_r}} \ar[dd]_{\mu} \\
 &  & \ddrtwocell<\omit>{<-2> \, \ps} &\  \ \ \ \sO(n)\times \sB^n \ \ \ \ \ \  \ar[dd]^-{\xi_n} \\ 
 \sO(m)\times \sA^m \ar[dr]_-{\tha_m} \ar[rr]^-{\id \times f^m} &  \drtwocell<\omit>{\quad \ze_m} &  \sO(m)\times \sB^m \ar[dr]^-{\xi_m}   \\
  & \sA  \ar[rr]_-{f} & & \sB  \\} 
\]
}
The equality of these diagrams is equivalent to that of the pasting diagrams specified in \autoref{Tpseudo2}. If the $\ze_n$ are identity $\sV$-functors, then $f$ is a (strict) $\sO$-map.
\end{defn}

\begin{defn}\label{Opseudo3}  An {\em algebra 2-cell}
$\la \colon (f,\ze)\Longrightarrow (g,\ka)$ 
is given by a $\sV$-transforma\-tion $\la\colon f\Longrightarrow g$, not necessarily invertible, such that the pasting diagrams  specified in \autoref{Tpseudo2} are equal.  Explicitly, for all $n$
\[ \xymatrixcolsep{.75cm}\xymatrixrowsep{.75cm}\xymatrix{
\sO(n) \times \sA^n  \rrtwocell<5>^{\id \times f^n}_{\id \times g^n}{<0> \qquad \id \times \la^n}  \ar[dd]_-{\tha_n}  \ddrrtwocell<\omit>{<1.5> \quad \ka_n}&& \sO(n)\times \sB^n \ar[dd]^-{\xi_n}  && \sO(n) \times \sA^n \ar[dd]_{\tha_n}   \ar[rr]^{\id\times f^n}    \ddrrtwocell<\omit>{<-1.5> \quad \ze_n} && \sO(n) \times \sB^n \ar[dd]^{\xi_n} \\
&&&=\\
\sA \ar[rr]_g && \sB && \sA \rrtwocell<5>^{f}_{g}{<0> \, \la}  && \sB
}
\]
\end{defn}

As promised in the introduction, with these definitions, we have the three $2$-categories $\opa$, $\oap$, and $\oas$, and a comparison of definitions  identifies them with their monadic analogs  
$\bO\text{-}\bf{PsAlg}$, $\bO\text{-}\bf{AlgPs}$, and  $\bO\text{-}\bf{AlgSt}$.

\begin{rem} Since $\sV$ is cartesian monoidal, we have a diagonal map of operads $\DE\colon \sO\rtarr \sO\times \sO$.  
Use of $\DE$ shows that the $2$-category of $\sO$-pseudoalgebras is again cartesian monoidal, and it is also bicomplete.
\end{rem}

\begin{rem} We comment on paths not taken. As in \cite{DS}, we can define pseudo-operads 
by allowing the associativity diagram for the composition
functor $\ga$ to commute only up to $\sV$-isomorphism.  We can then define pseudoalgebras over pseudo-operads. 
Similarly, following \cite{Bat, DS}, we can define lax or op-lax $\sO$-algebras by not requiring the 
$\varphi$ to be isomorphisms.  For example, taking the operad to be the permutativity operad $\sP$ (see below), this defines lax symmetric monoidal categories.  Lax monoidal categories are studied in \cite{Bat, DS} and are called lax multitensors in \cite{BCW}.   
The papers \cite{Bat, DS} show that lax monoidal categories are strict algebras over an appropriate operad, and the same is also true of lax symmetric monoidal categories.  In the absence of applications, we prefer to ignore these further weakenings and this form of strictification.
\end{rem}

\section{Operadic specification of symmetric monoidal $G$-categories}\label{genuine} 

Except in \autoref{GV}, we specialize to the case  $\sV=\bf{Set}$ in this section.  However, we can use the functor $\bV\colon \bf{Set}\rtarr \sV$ from \autoref{bUbV} or its equivariant variant
 from \autoref{GbUbV} to generalize the basic definitions. Since $\bV$ preserves finite limits (see \autoref{finitelim}), it preserves groups and operads.
Applying $\bV$ to the operads defined below gives the corresponding operads in $\sV$ or $G\sV$, and their  algebras specify the analogues in $\sV$  or $G\sV$ of the algebraic structures we discuss.

We first recall the definition of the permutativity operad $\sP$, which is chaotic by definition.
We start with the associativity operad\footnote{Always denoted $\sM$ in previous work of the senior author.} $\mathbf{Assoc}$ in 
$\mathbf{Set}$, where $\mathbf{Assoc}(j) = \SI_j$ as a right $\SI_j$-set.  We write $e_j$ for the identity element of $\SI_j$.  We have block sum of permutations homomorphisms $\oplus\colon \SI_i\times \SI_j \rtarr \SI_{i+j}$. 
If $j= j_1+\cdots + j_k$ and $\si\in \SI_k$, we define $\si(j_1,\cdots, j_k)\in \SI_j$ to be the element 
that permutes the $k$ blocks of letters as $\si$ permutes $k$ letters. With these notations, the structure maps $\ga$ are given by\footnote{This corrects an
incorrect formula on \cite[p. 82]{MayPerm}.}
\[ \ga(\si; \ta_1,\cdots, \ta_j) = \si(j_1,\cdots,j_k)(\ta_1\oplus\cdots \oplus \ta_k). \]

This is forced by $\ga(e_k;e_{j_1},\cdots,e_{j_k}) = e_j$ and the equivariance formulas
\[ \ga(\si;\nu_1\ta_1,\cdots,\nu_k\ta_k) = \ga(\si;\nu_1,\cdots,\nu_j)(\ta_1\oplus\cdots \oplus \ta_k) \]
for $\nu_s\in \SI_{j_s}$ and
\[ \ga(\mu\si;\ta_1,\cdots,\ta_k) = \ga(\mu;\ta_{\si^{-1}(1)}, \cdots, \ta_{\si^{-1}(k)}) \si(j_1,\cdots,j_k) \]
for $\mu\in \SI_k$ 
 in the definition of an operad.  To see this, take $\mu = e_k$ and $\nu_s = e_{j_s}$ and use these formulas in order. Algebras over $\mathbf{Assoc}$ are monoids in $\bf{Set}$.  

\begin{defn}
Let $G$ be a discrete group.  Let $G$ act by right multiplication on $G$ and diagonally 
on $G\times G$.  With these actions on objects and morphisms, $\mathcal{E}G$ is a right $G$-category. It also has a left action via left multiplication, making it a $G$-category.
\end{defn}

As shown in \cite{GMM}, $B\mathcal{E}G$ is a universal principal $G$-bundle.
The permutativity operad $\sP$ is obtained by applying the product-preserving functor
$\mathcal{E}(-)$ to $\mathbf{Assoc}$.

\begin{defn} The {\em permutativity operad} $\sP$ is the chaotic categorification of $\mathbf{Assoc}$, so that
$\sP(j)$ is the right $\SI_j$-category $\mathcal{E}{\SI}_j$.  
\end{defn}

Clearly $\sP(0)$ and $\sP(1)$ are trivial, the latter with unique object $e_1= 1$.  
The structure map $\ga$ is induced from that of $\mathbf{Assoc}$ by application of  $\mathcal{E}(-)$.

There is a product-preserving functor $\mathbf{Cat}( \mathcal{E} G ,-)$ from the category of $G$-categories to itself.  It is considered in detail in \cite{ GM3, GMM}. 

\begin{defn}
Let $\sA$ be a $G$-category. Define $\mathbf{Cat}( \mathcal{E} G ,\sA)$  to be the $G$-category whose objects and morphisms 
are all (not necessarily equivariant) functors $ \mathcal{E} G  \rtarr \sA$ and all natural transformations between them.  The (left) action of $G$ on
$\mathbf{Cat}( \mathcal{E} G ,\sA)$ is given by conjugation.
\end{defn}

Note that, by \autoref{chaotic}, when $\sA$ is chaotic then so is $\mathbf{Cat}( \mathcal{E} G ,\sA)$.  Since the functor $\mathbf{Cat}( \mathcal{E} G ,-)$ preserves products, it also preserves structures defined in terms of products. In particular, it takes $G$-operads to $G$-operads.  The trivial $G$-functor 
$\mathcal{E} G \rtarr \ast$ 
is an equivalence of underlying categories and induces a $G$-functor 
\[ \io\colon \sA \rtarr \mathbf{Cat}( \mathcal{E} G ,\sA) \]
that is also an equivalence of underlying categories. It follows that,
on taking classifying spaces, $\io$ induces a nonequivariant homotopy equivalence.

\begin{defn} The {\em permutativity operad} $\sP_G$ in $\mathbf{Cat}(G{\text{-}}\mathbf{Set})$ is the chaotic operad 
$\sP_G = \mathbf{Cat}( \mathcal{E} G ,\sP)$, where $G$ acts trivially on $\sP$. Thus 
$\sP_G(j)$ is the $G$-category $\mathbf{Cat}( \mathcal{E} G ,\mathcal{E}{\SI}_j)$.
The operad structure is induced from that of $\sP$.  
\end{defn}

\begin{rem}\label{GV} Returning to a general $\sV$,
recall the category $G\sV$ of $G$-objects in $\sV$ from \autoref{DefnGV}
and the functor $\bV\colon G{\text{-}}\mathbf{Set}\rtarr G\sV$ from \autoref{GbUbV}.
Applying $\bV$, we regard $\sP_G$ as an operad in $\mathbf{Cat}(G\sV)$. In the case $\sV=\sU$, 
$\bV$ just gives a $G$-set the discrete topology. Thus, our notion of a symmetric monoidal $G$-category immediately extends to $G$-categories internal to $G$-spaces.
\end{rem}

Clearly $\sP_G$ is reduced and $\sP_G(1)$ is trivial with unique object $1$.  When $G$ is the 
trivial group, $\sP_G = \sP$. The functor $\io$ specifies a map $\sP\rtarr \sP_G$ of $G$-operads.
Application of $B$ gives a weak equivalence $B\sP\rtarr B\sP_G$ 
of nonequivariant operads.
The operad $B\sP_G$ is an equivariant $E_{\infty}$ operad, meaning that $B\sP_G(j)$ is a universal $(G,\SI_j)$-bundle (see \cite[Theorem 0.4]{GMM}).

It has been known since \cite{MayPerm} that $\sP$-algebras are the same as permutative categories, and in
\cite{ GM3} we defined a genuine permutative $G$-category to be a $\sP_G$-algebra.  In principle, for an 
operad $\sO$, $\sO$-algebras give ``unbiased'' algebraic structures.\footnote{Biased versus unbiased algebraic
structures are discussed in \cite[\S3.1]{Lein}, for example.}  Products $\sA^n\rtarr \sA$ are 
given for each object of $\sO(n)$.  Biased algebraic structures are defined more economically, usually
starting from a binary product $\mu\colon \sA\times \sA\rtarr \sA$.  Ignoring the associativity isomorphism
for cartesian products, the associativity axiom for permutative categories then states that 
$\mu\com (\mu\times \id)=\mu\com (\id\times \mu)$.   When permutative categories are defined by actions
of $\sP$, we are given a canonical $3$-fold product  $\sA^3 \rtarr \sA$, and the associativity axiom now says that
both $\mu\com (\mu\times \id)$ and $\mu\com (\id\times \mu)$ are equal to that $3$-fold product.  The 
biased definition of a permutative category requires use of only $\sA^n$ for $n\leq 3$.

Similarly, the biased definition of a symmetric monoidal category requires use of only $\sA^n$ for $n\leq 4$.
Use of four variables is necessary to state the pentagon axiom in the absence of strict associativity.
Just as permutative categories are the same as $\sP$-algebras, we claim that symmetric monoidal categories are essentially the same as $\sP$-pseudoalgebras.  

We have required the strict Operadic Identity Axiom on $\sP$-pseudoalgebras because that is both natural and necessary to our claim: symmetric monoidal categories $\sA$ come with the identity operation 
$\sA\rtarr \sA$, and there is nothing that might correspond to an isomorphism to the identity operation.   

More substantially, our Unit Object Axiom requires that $0$ be a strict unit object for the product on an 
$\sO$-pseudoalgebra.  This is of course not true for symmetric monoidal categories in general. The more precise claim is that symmetric monoidal categories with a strict unit object correspond bijectively to $\sP$-pseudoalgebras as we have defined them. This requires proof, which in one direction amounts to deriving the pentagon and hexagon axioms from the equivariance and associativity properties of the transformations $\varphi$ that appear in  the definition of $\sP$-pseudoalgebras, and in the other direction amounts to proving that, conversely, all the properties of the transformations $\varphi$ can be derived from those at lower levels.  Although not in the literature as far as we know, this is well-known categorical folklore and is left as an exercise.  See chapter 3 of \cite{Lein} for a discussion of the nonsymmetric case.

Of course, a symmetric monoidal category is monoidally equivalent to a symmetric monoidal category with a strict unit since it is monoidally equivalent to a permutative category, but the former equivalence is much easier to prove. It is a categorical analogue of growing a whisker to replace a based
space by an equivalent based space with nondegenerate basepoint \cite[\S5]{ GM2}.  Just as we require basepoints to be nondegenerate in topology, we require our symmetric monoidal categories to have strict unit objects. 

In Definition \ref{Gsymm}, we defined genuine symmetric monoidal $G$-categories to be $\sP_G$-pseudoalgebras, implicitly requiring them to satisfy our axioms.  The operadic definitions of genuine permutative and symmetric monoidal 
$G$-categories give unbiased algebraic structure, and here the biased notions have yet to be determined.

\begin{prob}  Determine biased specifications of genuine permutative and symmetric monoidal $G$-categories.
\end{prob}

That is, it is desirable to determine explicit additional structure on a naive permutative
or symmetric monoidal $G$-category that suffices to give it a genuine structure. As shown in \cite{Reedies} by the fourth author and her collaborators, this problem cannot be solved. More precisely, they show that if $G$ is a nontrivial finite group, the operad $\sP_G$ is not finitely generated. This means that in order to specify the structure of a $\sP_G$-algebra, one needs to specify an infinite amount of information, subject to an infinite number of axioms.

Using ideas from Rubin \cite{Rubin2}, one can produce a finitely generated suboperad $\sQ_G$ of $\sP_G$ that is equivariantly equivalent, in the sense that it is also an $E_\infty$ $G$-operad. Bangs et al. solve in \cite{Reedies} the problem of identifying biased specifications for algebras over $\sQ_G$ for $G=C_p$ when $p=2,3$. 

Rubin \cite{Rubin2} has solved this problem in a closely related but not identical context.  He proves a coherence
theorem of just the sort requested for algebras over the $N_{\infty}$ operads that he constructs.   Despite the close 
similarity of context, there is hardly any overlap between his work and ours.  His work in progress promises to establish
the precise relationship between our symmetric monoidal $G$-categories and commutative monoids in the relevant 
$G$-symmetric monoidal categories of Hill and Hopkins \cite{HH}.   Precisely, his normed symmetric monoidal categories 
are intermediate between these and will be compared to each in forthcoming papers of his.

Since naive permutative and symmetric monoidal $G$-categories are just nonequivariant structures with 
$G$ acting compatibly on all structure in sight, the nonequivariant equivalence between biased and unbiased 
definitions applies verbatim to them.  This has the following implication, which shows that naive
structures can be functorially extended to naively equivalent genuine structures.

\begin{prop} The functor $\mathbf{Cat}(\mathcal{E} G ,-)$ induces functors from naive to genuine 
permutative $G$-categories and from naive to genuine symmetric monoidal $G$-categories.  In both
cases, the constructed genuine structures are naively equivalent via $\io$ to the given naive
structures.
\end{prop}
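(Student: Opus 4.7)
The plan is to leverage the fact that $\mathbf{Cat}(\mathcal{E}G,-)$, as a hom-$2$-functor on $G\bf{Cat}$ with conjugation $G$-action on the values, is a product-preserving $2$-functor. First I would verify that finite products are preserved, either by directly checking the isomorphism $\mathbf{Cat}(\mathcal{E}G,\sA\times \sB)\cong \mathbf{Cat}(\mathcal{E}G,\sA)\times \mathbf{Cat}(\mathcal{E}G,\sB)$, or by invoking that a representable functor preserves all limits and that the terminal $G$-category is sent to the terminal $G$-category. Once this is in hand, an operad $\sO$ in $G\bf{Cat}$ is carried to the operad $\mathbf{Cat}(\mathcal{E}G,\sO)$, with structure maps obtained by applying the functor objectwise; in particular the trivial $G$-operad $\sP$ is carried to $\sP_G$ by definition.

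Second, given a naive permutative $G$-category $\sA$, i.e.\ a $\sP$-algebra in $G\bf{Cat}$ with trivial $G$-action on $\sP$, I would apply $\mathbf{Cat}(\mathcal{E}G,-)$ to each of its action maps $\tha_n\colon \sP(n)\times \sA^n\rtarr \sA$, using the canonical isomorphism
\[ \mathbf{Cat}(\mathcal{E}G,\sP(n))\times \mathbf{Cat}(\mathcal{E}G,\sA)^n \;\cong\; \mathbf{Cat}(\mathcal{E}G,\sP(n)\times \sA^n) \]
to exhibit $\mathbf{Cat}(\mathcal{E}G,\sA)$ as a $\sP_G$-algebra. All $\sO$-algebra axioms (equivariance, unit, associativity) are equations of maps of $G$-categories, and are preserved by the functor. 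For the pseudo case, I would observe that $\mathbf{Cat}(\mathcal{E}G,-)$ is a genuine $2$-functor: it acts on natural transformations, preserves invertibility, and preserves whiskering and pasting of $2$-cells. Hence, starting from a naive symmetric monoidal $G$-category (a $\sP$-pseudoalgebra with trivial $G$-action on $\sP$), the image of each coherence $2$-cell $\varphi$ yields a coherence $2$-cell for the $\sP_G$-pseudoalgebra structure on $\mathbf{Cat}(\mathcal{E}G,\sA)$, and all of the coherence axioms in \autoref{Opseudo} (being pasting equalities) transport along. Functoriality on $\sO$-maps and $\sO$-pseudomorphisms is immediate from the same $2$-functoriality.

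Third, for the naive equivalence via $\io$, I would use that $\io$ is induced by the unique $G$-functor $\mathcal{E}G\rtarr \ast$. By naturality of $\mathbf{Cat}(-,\sA)$ in the first variable, $\io$ fits into commuting squares
\[ \xymatrix{ \sP(n)\times \sA^n \ar[r]^-{\tha_n} \ar[d]_-{\io\times \io^n} & \sA \ar[d]^-{\io} \\ \sP_G(n)\times \mathbf{Cat}(\mathcal{E}G,\sA)^n \ar[r]_-{\tha_n} & \mathbf{Cat}(\mathcal{E}G,\sA) } \]
and an analogous naturality identifies the coherence $2$-cells on $\mathbf{Cat}(\mathcal{E}G,\sA)$ with the images of those on $\sA$. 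Thus $\io$ is a strict $\sP$-map (equivalently a strict pseudomorphism), where the target is regarded as a $\sP$-pseudoalgebra via the operad map $\sP\rtarr \sP_G$. That it is a nonequivariant equivalence of underlying categories was already noted in the discussion preceding the definition of $\sP_G$, since $\mathcal{E}G\rtarr \ast$ is an equivalence of underlying categories and $\mathbf{Cat}(-,\sA)$ preserves such equivalences.

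The main obstacle is not any one hard step but the bookkeeping verification that the action of $\mathbf{Cat}(\mathcal{E}G,-)$ on the coherence $2$-cells $\varphi$ really does satisfy our normality and coherence axioms (equivariance, unit-object, operadic identity, operadic composition) as stated in \autoref{Opseudo}; this amounts to observing that each axiom is an equality of pasting diagrams, so being a $2$-functor that preserves products automatically transports each axiom from $\sA$ to $\mathbf{Cat}(\mathcal{E}G,\sA)$.
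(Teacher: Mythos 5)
Your proposal is correct and follows exactly the route the paper intends: it deduces the result from the facts, established just before the proposition, that $\mathbf{Cat}(\mathcal{E}G,-)$ preserves products (and is a $2$-functor), hence transports $\sP$-(pseudo)algebra structure to $\sP_G$-(pseudo)algebra structure since $\sP_G = \mathbf{Cat}(\mathcal{E}G,\sP)$ by definition, and that the natural $G$-functor $\io$ is an equivalence of underlying categories. The paper leaves these verifications implicit, and your write-up simply makes the same bookkeeping explicit.
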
 

In particular, we can apply this to nonequivariant input categories or to categories with $G$-action.  Thus examples of genuine permutative 
and symmetric monoidal $G$-categories are ubiquitous.

\section{Enhanced factorization systems}\label{EFS0}
  
\subsection{Enhanced factorization systems}\label{EFS1}
In this section, we establish the context for the strictification theorem by defining enhanced factorization systems.  We let $\sK$ be an arbitrary $2$-category.

\begin{defn}\label{EFS} 
An {\em enhanced factorization system,} abbreviated EFS, on $\sK$ consists of a pair $(\mathcal{E},\sM)$ of classes of $1$-cells of $\sK$, both of which contain all isomorphisms, that satisfy the following properties.   
\begin{enumerate}[(i)]
\item Every $1$-cell $f$ factors as a composite 
\[ \xymatrix@1{ X \ar[r]^-{e_f} & \bI f \ar[r]^-{m_f} & Y, \\} \]
where $m_f\in \sM$ and $e_f\in \mathcal{E}$. 
\item For a diagram in $\sK$ of the form
\[ \xymatrix{
A \ar[r]^{e} \ar[d]_-{v}  \drtwocell<\omit>{<0> \, \varphi}  & X \ar[d]^{u} \\
B \ar[r]_-{m} & Y,\\} \]
where $e\in \mathcal{E}$, $m\in \sM$, and $\varphi$ is an invertible $2$-cell, there is a unique 
pair $(w,\widetilde{\varphi})$ consisting of a $1$-cell $w\colon X\rtarr B$ and an invertible $2$-cell 
$\widetilde{\varphi}\colon u\Longrightarrow m\com w$ such that $w\com e = v$ and $\widetilde{\varphi} \com e = \varphi$.
\[ \xymatrix{
A \ar[rr]^-{e} \ar[d]_-{v} & & X \ar[d]^{u} \ar[dll]_{w}  \dtwocell<\omit>{<3>  {\widetilde{\varphi}} }  \\
B \ar[rr]_-{m}  & &  Y \\} \]
By the uniqueness, if $\varphi$ is the identity, then $u=m\com w$ and $\widetilde{\varphi}$ is the identity.
\item  With the notations of (ii), suppose that $m\com v = u\com e$ and a second pair of $1$-cells $v'\colon A \rtarr B$ and $u'\colon X \rtarr Y$ is given
such that $m\circ v' = u'\circ e$ together with 2-cells $\si \colon v \Rightarrow v'$ and $\ta \colon u\Rightarrow u'$ such that 
$\ta \circ e= m\circ \sigma$.  Then there exists a unique 2-cell $\rho \colon  w \Rightarrow w'$ such that $\rho \circ e = \si$ and 
$m\circ \rho = \ta$. 
\end{enumerate}
We say that an EFS $(\mathcal{E},\sM)$ is {\em rigid} if the following further property holds.
\begin{enumerate}[(iv)]
\item For $m\colon Y \rtarr X$ in $\sM$ and any 1-cell $f\colon X \rtarr Y$, if $m\circ f$ is 
isomorphic to $\id_X$ then 
$f\circ m$ is isomorphic to $\id_Y$.
\end{enumerate}
\end{defn}

It is important to notice that the notion of a rigid EFS depends only on the underlying $2$-category $\sK$ 
and not on any $2$-monad defined on it. 

\begin{rem} The notation $(\mathcal{E},\sM)$ reminds us of epimorphisms and monomorphisms,
which give the usual factorization system in the category of sets. The notation
$\bI f$ stands for the image of $f$, reminding us of this elementary intuition. The
maps in $\sM$ can be categorical monomorphisms, meaning that 
$m\com f = m\com g$ implies $f=g$ when $m\in \sM$.  However, this {\em fails} for the $\sM$ 
of primary interest in this paper.  We shall interpolate as remarks a number of results that
hold when the maps in $\sM$ are monomorphisms but that fail otherwise. 
\end{rem}

The following observation about factorizations of composites of $1$-cells illustrates how 
EFSs mimic the behavior of the image factorization of functions. Its proof is immediate from
\autoref{EFS}(ii) with a reorientation of its second diagram.

\begin{lem}\label{EFSComp} 
Let $(\cE,\sM)$ be an EFS on $\sK$.
For $1$-cells $f\colon X\rtarr Y$ and $g\colon Y\rtarr Z$, there is a
unique ``composition" $1$-cell $c\colon \bI(g f)\rtarr \bI g$ making the following diagram commute.
\[\xymatrix{ X \ar[r]^-{f} \ar[d]_{e_{gf}} & Y \ar[r]^-{e_g} & \bI g \ar[d]^{m_g} \\
 \bI(gf) \ar[rru]^{c} \ar[rr]_-{m_{gf}} &  &  Z \\} \]
\end{lem}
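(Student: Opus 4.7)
The plan is to apply axiom (ii) of the enhanced factorization system directly to a suitable square built from the given factorizations.

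First I would observe that the two factorizations $gf = m_{gf}\circ e_{gf}$ and $g = m_g \circ e_g$ together yield the equality
\[ m_g \circ (e_g \circ f) = (m_g \circ e_g)\circ f = g\circ f = m_{gf}\circ e_{gf}. \]
This says that the square with top edge $e_{gf}\colon X\to \bI(gf)$, left edge $e_g\circ f\colon X\to \bI g$, right edge $m_{gf}\colon \bI(gf)\to Z$, and bottom edge $m_g\colon \bI g\to Z$ commutes on the nose. Since $e_{gf}\in \cE$ and $m_g\in \sM$ by the factorization property (i), this square fits exactly the hypotheses of property (ii) with $\varphi$ the identity 2-cell.

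Next I would invoke (ii) to obtain a unique pair $(c,\widetilde\varphi)$ consisting of a $1$-cell $c\colon \bI(gf)\to \bI g$ and an invertible $2$-cell $\widetilde\varphi\colon m_{gf}\Longrightarrow m_g\circ c$ with $c\circ e_{gf} = e_g\circ f$ and $\widetilde\varphi\circ e_{gf} = \id$. By the final sentence of \autoref{EFS}(ii), since $\varphi$ is the identity, we also have $m_g\circ c = m_{gf}$ and $\widetilde\varphi = \id$. This produces precisely the commutative diagram asserted in the lemma.

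Finally, uniqueness of $c$ is part of what (ii) delivers: any other $1$-cell $c'\colon \bI(gf)\to \bI g$ satisfying $c'\circ e_{gf} = e_g\circ f$ and $m_g\circ c' = m_{gf}$ would pair with $\widetilde\varphi = \id$ to give a second solution to the same diagonal-filler problem, and hence must equal $c$. There is no real obstacle here; the lemma is essentially a direct unpacking of axiom (ii), and the only care needed is to check that the square to which (ii) is applied is the correct one and that the identity $2$-cell is a legitimate input.
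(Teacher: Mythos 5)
Your proof is correct and follows exactly the route the paper intends: the paper states that the lemma is immediate from \autoref{EFS}(ii) applied (after reorienting the diagram) to the strictly commuting square $m_g\circ(e_g\circ f) = m_{gf}\circ e_{gf}$ with $\varphi=\id$, which is precisely your argument. Your uniqueness step, pairing any competing $c'$ with the identity $2$-cell and invoking the uniqueness of the pair $(w,\widetilde\varphi)$ in \autoref{EFS}(ii), is also the right justification.
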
 

\begin{rem}\label{EFSCompM}
If there is a $1$-cell $s\colon Z\to X$ such that $g f s = \id$ and if $m_g$ and $m_{g f}$
are categorical monomorphisms, then $c$ is an isomorphism with inverse 
$c^{-1} = e_{g f} s m_g$.
Indeed, given $s$,
\[  m_{gf} c^{-1}  c = m_{gf} e_{gf} s m_g c = g f s m_{gf} = m_{gf}\]
and 
\[ m_g c  c^{-1} = m_{g f} e_{g f} s m_g = g f s m_g = m_g. \]
The monomorphism property implies that $c^{-1} c = \id$ and $c c^{-1} = \id$. 
\end{rem}

We have an analogous observation about the factorization of products.

\begin{defn}\label{prod}  Let $\sK$ have products and an EFS $(\mathcal{E},\sM)$.  We say that $(\mathcal{E},\sM)$ is {\em product-preserving} if  the product of $1$-cells in $\mathcal{E}$ is in $\mathcal{E}$ and the product of $1$-cells in $\sM$ is in $\sM$.  The name is justified by the observation that if $(\mathcal{E},\sM)$ is product-preserving, then for each pair of $1$-cells $f\colon X\rtarr Y$ and $f'\colon X'\rtarr Y'$, application of \autoref{EFS}(ii) gives morphisms
\[ \xymatrix{  X \times X'  \ar[r]^-{e_{f\times f'}} \ar[d]_{e_{f} \times e_{f'}} &  \bI (f\times  f' )\ar[d]^{m_{f \times f'}}\\
\bI f\times \bI f' \ar[ur]^{p} \ar[r]_-{m_{f} \times m_{f'}}  & Y\times Y' }  \ \ \text{and} \ \   \xymatrix{  X \times X'  \ar[r]^-{e_f\times e_{f'}} \ar[d]_{e_{f\times f'}} &  \bI f\times \bI f' \ar[d]^{m_f \times m_{f'}}\\
\bI(f\times f') \ar[ur]^{p^{-1}} \ar[r]_-{m_{f\times f'}}  & Y\times Y'.} \]
By an argument similar to that in \autoref{EFSCompM}, these are inverse to each other.
\end{defn}

\subsection{The enhanced factorization system on $\bf{Cat}(\sV)$}\label{EFS2}

We need an enhanced factorization system on $\bf{Cat}(\sV)$.  The idea comes from Power's paper \cite{Power}. We owe the adaptation to our context to Nick Gurski.\footnote{Private communication.} 

\begin{defn}\label{defn:BOFF} A $\sV$-functor $f\colon \sX\rtarr \sY$ is {\em bijective on objects} if the $\sV$-map
$f\colon \bf{Ob}\sX\rtarr \bf{Ob}\sY$ is an isomorphism.  It is {\em full and faithful}
if the following square in $\sV$ is a pullback.
\[ \xymatrix{
\bf{Mor}\sX \ar[r]^-f \ar[d]_{(T,S)} & \bf{Mor}\sY \ar[d]^{(T,S)}\\
\bf{Ob}\sX\times \bf{Ob}\sX \ar[r]_-{f\times f} & \bf{Ob}\sY\times \bf{Ob}\sY \\} \]
We abbreviate by calling the class of functors that are bijective on
objects $\sB\sO$ and calling the class of functors that are full and faithful $\sF\sF$.
\end{defn} 

\begin{lem}\label{prod2}  
The classes $\sB\sO$ and $\sF\sF$ of $\sV$-functors are closed under products.
\end{lem}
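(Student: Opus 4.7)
The plan is to reduce both closures to the observation that products in $\bf{Cat}(\sV)$ are computed componentwise on objects and morphisms (a direct consequence of the definition of internal $\sV$-categories and the fact that the object and morphism functors $\bf{Ob},\bf{Mor}\colon \bf{Cat}(\sV)\rtarr \sV$ both preserve limits, as can be checked directly from \autoref{Vcatfun}, or from the fact that $\bf{Ob}$ is a right adjoint to $i$ and $\bf{Mor}$ is the pullback of $\bf{Ob}\times\bf{Ob}$ along $(S,T)$). Given two $\sV$-functors $f\colon \sX\rtarr \sY$ and $f'\colon \sX'\rtarr \sY'$, I would first write down the identifications
\[ \bf{Ob}(\sX\times\sX')=\bf{Ob}\sX\times\bf{Ob}\sX', \qquad \bf{Mor}(\sX\times\sX')=\bf{Mor}\sX\times\bf{Mor}\sX', \]
under which the object and morphism maps of $f\times f'$ are literally $\bf{Ob}f\times\bf{Ob}f'$ and $\bf{Mor}f\times\bf{Mor}f'$.

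For closure of $\sB\sO$ under products, if $\bf{Ob}f$ and $\bf{Ob}f'$ are isomorphisms in $\sV$, then so is their product, so $\bf{Ob}(f\times f')$ is an isomorphism; hence $f\times f'\in\sB\sO$. This is essentially immediate.

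For closure of $\sF\sF$ under products, the pullback squares of \autoref{defn:BOFF} for $f$ and for $f'$ can be multiplied in $\sV$; the resulting square is precisely the defining square for $f\times f'$, after using the natural identifications of products of products $(\bf{Ob}\sX\times\bf{Ob}\sX)\times(\bf{Ob}\sX'\times\bf{Ob}\sX')\iso (\bf{Ob}\sX\times\bf{Ob}\sX')\times(\bf{Ob}\sX\times\bf{Ob}\sX')$ and similarly for the other corners. I would then invoke the standard fact that the product (in $\sV$) of two pullback squares is again a pullback square, which holds in any category with finite limits because products commute with limits. This gives that the defining square for $f\times f'$ is a pullback in $\sV$, so $f\times f'\in\sF\sF$.

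The only mild subtlety is bookkeeping the canonical isomorphisms that identify the components of $\bf{Ob}(\sX\times\sX')\times\bf{Ob}(\sX\times\sX')$ with $(\bf{Ob}\sX\times\bf{Ob}\sX)\times(\bf{Ob}\sX'\times\bf{Ob}\sX')$, but this is a routine rearrangement of factors and presents no real obstacle. No nontrivial property of the EFS is needed; the lemma is purely formal.
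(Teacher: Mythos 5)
Your proof is correct and follows essentially the same route as the paper, whose entire proof reads ``A product of isomorphisms is an isomorphism and products commute with pullbacks''; you have simply spelled out the componentwise description of products in $\mathbf{Cat}(\sV)$ and the factor-rearrangement bookkeeping that the paper leaves implicit. No further comment is needed.
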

\begin{proof}  A product of isomorphisms is an isomorphism and products commute with pullbacks.
\end{proof}    

We defer the proof of the following purely categorical theorem to \S\ref{EFS3}.

\begin{thm}\label{EFSCatV} The classes $(\sB\sO,\sF\sF)$ specify a product-preserving rigid enhanced 
factorization system on $\bf{Cat}(\sV)$.
\end{thm}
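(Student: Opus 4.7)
The plan is to verify axioms (i)--(iv) of Definition \ref{EFS} for $(\sB\sO,\sF\sF)$ on $\bf{Cat}(\sV)$, with product-preservation supplied by Lemma \ref{prod2}.  Both classes contain all isomorphisms: a $\sV$-isomorphism is obviously bijective on objects, and the defining square for $\sF\sF$ is a pullback whenever $f$ is invertible on both objects and morphisms.

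For the factorization (i), given $f\colon \sX \rtarr \sY$, form $\bI f$ with $\bf{Ob}(\bI f) := \bf{Ob}\sX$ and
\[ \bf{Mor}(\bI f) := \bf{Mor}\sY \times_{\bf{Ob}\sY \times \bf{Ob}\sY} (\bf{Ob}\sX \times \bf{Ob}\sX),\]
where the right leg is $f \times f$.  The source, target, identity and composition maps of $\bI f$ descend from those of $\sY$ via the pullback universal property together with functoriality of $f$.  Define $e_f\colon \sX \rtarr \bI f$ to be the identity on objects and, on morphisms, the pairing of $f\colon \bf{Mor}\sX \rtarr \bf{Mor}\sY$ with $(S, T)\colon \bf{Mor}\sX \rtarr \bf{Ob}\sX \times \bf{Ob}\sX$ (which agree in $\bf{Ob}\sY \times \bf{Ob}\sY$ by functoriality of $f$), so that $e_f \in \sB\sO$.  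Define $m_f\colon \bI f \rtarr \sY$ to be $f$ on objects and the projection on morphisms; the defining pullback is then precisely the condition that $m_f \in \sF\sF$, and $m_f \circ e_f = f$ by construction.

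For the lifting property (ii), the equation $w \circ e = v$ forces $w := v \circ e^{-1}$ on objects, where $e^{-1}$ denotes the inverse of $e$ on objects, and correspondingly $\widetilde{\varphi} := \varphi \circ e^{-1}$ is the unique invertible $2$-cell $u \Rightarrow m w$ with $\widetilde{\varphi} \circ e = \varphi$.  To define $w$ on morphisms, form the ``conjugation'' map $u^\sharp\colon \bf{Mor}\sX \rtarr \bf{Mor}\sY$ representing $\al \mapsto \widetilde{\varphi}_{T\al} \circ u(\al) \circ \widetilde{\varphi}_{S\al}^{-1}$, expressed internally via the composition in $\sY$ whiskered against $u$ and $\widetilde{\varphi}^{\pm 1}$.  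The source and target of $u^\sharp$ are $m w \circ S$ and $m w \circ T$, so the pullback defining $\sF\sF$ yields a unique $w\colon \bf{Mor}\sX \rtarr \bf{Mor}\sB$ with $m \circ w = u^\sharp$ and $(S, T) \circ w = (w S, w T)$.  Functoriality of $w$, the identity $w \circ e = v$ as $\sV$-functors, and the conditions making $\widetilde{\varphi}$ a $\sV$-transformation all reduce, via the pullback uniqueness of lifts through $m$, to identities that hold because $\varphi$, $u$, $v$, and $e$ are $\sV$-transformations or $\sV$-functors.  Uniqueness of $(w, \widetilde{\varphi})$ is forced by the same universal properties.

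For axiom (iii), applying (ii) with $\varphi = \id$ yields $w, w'$ with $m w = u$ and $m w' = u'$ strictly; the $\sV$-transformation $\ta\colon u \Rightarrow u'$ then satisfies $(S, T) \circ \ta = (m \times m) \circ (w, w')$, so the $\sF\sF$-pullback produces a unique lift $\rho\colon w \Rightarrow w'$ with $m \circ \rho = \ta$, and $\rho \circ e = \si$ because both lift $\ta \circ e = m \circ \si$ through $m$.  For rigidity (iv), given an invertible $\al\colon m f \Rightarrow \id_\sX$ with $m \in \sF\sF$, the whiskering $\al \circ m\colon \bf{Ob}\sY \rtarr \bf{Mor}\sX$ satisfies $(S, T) \circ (\al \circ m) = (m \times m) \circ (f m, \id)$, so the $\sF\sF$-pullback produces a $\sV$-transformation $\be\colon f m \Rightarrow \id_\sY$; the parallel lift of $\al^{-1} \circ m$ is the inverse of $\be$, verified by pullback uniqueness against $I_\sX \circ m$.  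I expect the main technical obstacle to be showing that the conjugation map $u^\sharp$ in (ii) really determines a $\sV$-functor $w$ and that $\widetilde{\varphi}$ is a $\sV$-transformation against it; this is where invertibility of $\varphi$ is decisively used, and the verification must be carried out internally via the universal property of the pullback defining $\sF\sF$, without recourse to elements of $\sV$.
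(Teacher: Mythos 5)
Your proof is correct and takes essentially the same route as the paper's own argument (\autoref{enfact1}--\autoref{enfact4}): the identical pullback construction of $\bI f$ with $\bf{Ob}(\bI f)=\bf{Ob}\sX$, the identical conjugation map in axiom (ii) (the paper's composite $C\com (\varphi\times u\times \varphi^{-1})\com(e^{-1}\times \id\times e^{-1})\com(T,\id,S)$ is precisely your $u^\sharp$) lifted through the full-and-faithful pullback to define $w$, and the same strategy for (iii) and (iv) of producing and comparing $2$-cells via the $\sF\sF$-pullback by checking after applying $S$, $T$, and $m$. The verifications you defer to pullback uniqueness are exactly the diagram chases the paper carries out, so nothing essential is missing.
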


The full and faithful functors are {\em not} categorical monomorphisms, but they do satisfy an illuminating weaker condition.

\begin{lem}\label{mono}  Assume that $f\colon \sX \rtarr \sY$ is a full and faithful $\sV$-functor and that
 $g,h\colon \sZ \rtarr \sX$ are 
$\sV$-functors such that $g = h\colon  \bf{Ob}\sZ \rtarr  \bf{Ob}\sX$  and $fg = fh$.  Then
$g=h \colon  \bf{Mor}\sZ \rtarr  \bf{Mor}\sX$ and thus $g=h$.
\end{lem}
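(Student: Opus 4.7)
The plan is to exploit the defining pullback square of a full and faithful $\sV$-functor. Recall from \autoref{defn:BOFF} that $f\in\sF\sF$ means the square
\[
\xymatrix{
\bf{Mor}\sX \ar[r]^-f \ar[d]_{(T,S)} & \bf{Mor}\sY \ar[d]^{(T,S)}\\
\bf{Ob}\sX\times \bf{Ob}\sX \ar[r]_-{f\times f} & \bf{Ob}\sY\times \bf{Ob}\sY
}
\]
is a pullback in $\sV$. So any $\sV$-morphism into $\bf{Mor}\sX$ is uniquely determined by its composites into $\bf{Mor}\sY$ (via $f$) and into $\bf{Ob}\sX\times\bf{Ob}\sX$ (via $(T,S)$).

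First, I would observe that because $g$ and $h$ are $\sV$-functors, the functoriality axiom gives $(T,S)\com g = (g\times g)\com (T,S)$ and $(T,S)\com h = (h\times h)\com (T,S)$ as maps $\bf{Mor}\sZ\rtarr \bf{Ob}\sX\times \bf{Ob}\sX$, where the $g$ and $h$ on the right-hand sides are the object maps. Since $g=h$ on objects by hypothesis, $(g\times g)=(h\times h)$ on objects, and hence $(T,S)\com g = (T,S)\com h$. Together with the given hypothesis $f\com g = f\com h$ on morphisms, this shows that the two morphism maps $g,h\colon \bf{Mor}\sZ\rtarr \bf{Mor}\sX$ have equal composites with both legs of the pullback cone.

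Finally, I would invoke the universal property of the pullback: a morphism into $\bf{Mor}\sX$ is uniquely determined by its postcompositions with $f$ and $(T,S)$. Hence $g=h$ on morphisms, and combined with equality on objects this gives $g=h$ as $\sV$-functors.

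There is no real obstacle here; the lemma is essentially a direct unwinding of the definition of $\sF\sF$. The only thing to be careful about is the mild notational overloading of $g$ and $h$ (denoting both object and morphism maps of the same $\sV$-functor), and making explicit that functoriality is what transports the hypothesis ``equal on objects'' into ``equal after composition with $(T,S)$ on morphisms.''
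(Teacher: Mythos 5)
Your proof is correct and is exactly the argument the paper has in mind: its proof consists of the single line ``this is a direct application of the universal property of pullbacks,'' and your write-up is precisely that application, spelled out. The one detail worth making explicit, which you do, is that functoriality of $g$ and $h$ together with equality on objects yields $(T,S)\com g=(T,S)\com h$, so both legs of the cone agree.
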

\begin{proof}   This is a direct application of the universal property of pullbacks.
\end{proof}

We can apply this to obtain a weakened  modification of \autoref{EFSComp}.  This is a digression since even the modification fails to apply in our applications, but it may well apply in other situations.

\begin{lem}\label{EFSComp2}
Let $f\colon \sX\rtarr \sY$ and $g\colon \sY\rtarr \sZ$ be $\sV$-functors.  If there is a $\sV$-functor
$s\colon \sZ\rtarr \sX$ such that  $sgf = \id$ on $\mathbf{Ob}X$, $fsg = \id$ on $\mathbf{Ob}Y$, and
$gfs =\id$, then the composition $\sV$-functor 
$c\colon \bI(gf) \rtarr \bI g$ of \autoref{EFSComp} is an isomorphism with inverse 
$c^{-1} = e_{gf} s m_g$.
\end{lem}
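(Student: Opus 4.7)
The approach is to mirror the proof of \autoref{EFSCompM}, but since full and faithful $\sV$-functors are not categorical monomorphisms, we cannot cancel $m_g$ and $m_{gf}$ directly from the right. Instead, we invoke \autoref{mono}, which says that cancellation is still permitted provided the two functors being compared already agree on objects. The hypotheses on $s$ are rigged precisely to supply these object-level agreements.

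First I would verify that the candidate $c^{-1} = e_{gf}\com s\com m_g$ satisfies the two identities
\[ m_{gf}\com c^{-1}\com c = m_{gf} \quad \text{and}\quad m_g\com c\com c^{-1} = m_g. \]
Both follow from the full strength $g\com f\com s = \id$ together with the defining equations $c\com e_{gf} = e_g\com f$ and $m_g\com c = m_{gf}$ from \autoref{EFSComp}; these are the same manipulations used in \autoref{EFSCompM}.

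Next I would check that $c^{-1}\com c$ agrees with $\id_{\bI(gf)}$ and $c\com c^{-1}$ agrees with $\id_{\bI g}$ on objects. Since $e_{gf}$ and $e_g$ are bijections on objects, the object maps of $c$ and $c^{-1}$ are $e_g\com f\com e_{gf}^{-1}$ and $e_{gf}\com s\com m_g$ respectively, and on objects $m_g = g\com e_g^{-1}$. A direct computation then gives
\[ c^{-1}\com c = e_{gf}\com (s\com g\com f)\com e_{gf}^{-1} \quad\text{and}\quad c\com c^{-1} = e_g\com (f\com s\com g)\com e_g^{-1}\]
on objects, which reduce to the identity by the hypotheses $s\com g\com f = \id$ on $\mathbf{Ob}\sX$ and $f\com s\com g = \id$ on $\mathbf{Ob}\sY$.

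Finally, since $m_{gf}$ and $m_g$ are full and faithful, \autoref{mono} applied to the pairs $(c^{-1}\com c,\ \id)$ and $(c\com c^{-1},\ \id)$ upgrades these object-level agreements to equalities of $\sV$-functors, completing the proof that $c^{-1}$ is a two-sided inverse for $c$. The only subtle point is recognizing that the two seemingly technical hypotheses $s\com g\com f = \id$ on $\mathbf{Ob}\sX$ and $f\com s\com g = \id$ on $\mathbf{Ob}\sY$ are exactly what is needed to feed into \autoref{mono}; that observation is where the weakened monomorphism property of $\sF\sF$ forces the stronger hypothesis list in this lemma compared with \autoref{EFSCompM}.
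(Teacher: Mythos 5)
Your proof is correct and follows essentially the same route as the paper's: the paper's (much terser) proof simply notes that in the explicit construction of \autoref{EFSCatV} one has $e = \id$ and $m = f$ on objects, so that $c = f$ on objects, and then invokes \autoref{mono} exactly as you do, with the equalities $m_{gf}\com c^{-1}\com c = m_{gf}$ and $m_g\com c\com c^{-1} = m_g$ supplied by the computation of \autoref{EFSCompM} using $gfs = \id$. Your version spells out the object-level computation abstractly from bijectivity on objects rather than from the concrete construction, but the decomposition of the argument is identical.
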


\begin{proof}  In the proof of \autoref{EFSCatV}, we take $\mathbf{Ob}\bI f = \mathbf{Ob}\sX$ and take $e_f = \id$ and $m_f = f$ on objects.  Therefore $c=f$ on objects.  The hypotheses on objects ensure that \autoref{mono} applies to give the conclusion.
\end{proof}

\subsection{Proof of the properties of the EFS on $\bf{Cat}(\sV)$}\label{EFS3}

We break the proof of \autoref{EFSCatV} into a series of propositions.  

\begin{prop}\label{enfact1} Every $\sV$-functor $f\colon \sX\rtarr \sY$ factors 
as a composite of $\sV$-functors 
\[ \xymatrix@1{ \sX \ar[r]^-{e_f} & \bI f \ar[r]^-{m_f} & \sY,\\}  \]
where $e_f$ is in $\sB\sO$ and $m_f$ is in $\sF\sF$.  Moreover, the factorization 
commutes with products.
\end{prop}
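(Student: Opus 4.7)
The plan is to define $\bI f$ by taking $\bf{Ob}\,\bI f = \bf{Ob}\,\sX$ and taking $\bf{Mor}\,\bI f$ to be the pullback in $\sV$
\[
\xymatrix{
\bf{Mor}\,\bI f \ar[r]^-{\widetilde{m_f}} \ar[d]_-{(T,S)} & \bf{Mor}\,\sY \ar[d]^-{(T,S)}\\
\bf{Ob}\,\sX\times \bf{Ob}\,\sX \ar[r]_-{f\times f} & \bf{Ob}\,\sY\times \bf{Ob}\,\sY,
}
\]
which exists since $\sV$ has finite limits. I then define $m_f$ to be $f$ on objects and $\widetilde{m_f}$ on morphisms, and I define $e_f$ to be the identity on objects and, on morphisms, the unique map $\bf{Mor}\,\sX\rtarr \bf{Mor}\,\bI f$ induced by the cone $(f\colon \bf{Mor}\,\sX\rtarr \bf{Mor}\,\sY, (T,S)\colon \bf{Mor}\,\sX\rtarr \bf{Ob}\,\sX\times \bf{Ob}\,\sX)$ (which commutes with $f\times f$ and $(T,S)$ because $f$ is a $\sV$-functor). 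By construction $m_f\in \sF\sF$ and $e_f\in \sB\sO$, and $m_f\com e_f = f$ on objects, while on morphisms the identity $m_f\com e_f = f$ follows from the defining universal property applied to $\widetilde{m_f}\com e_f = f$.

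Next I must equip $\bI f$ with the structure of a $\sV$-category so that $e_f$ and $m_f$ become $\sV$-functors. The identity map $I_{\bI f}\colon \bf{Ob}\,\sX \rtarr \bf{Mor}\,\bI f$ is obtained from the universal property applied to the cone $(I_\sY\com f,\DE)$, and the source/target maps are read off from the pullback projection $(T,S)$. For composition, I need a map $\bf{Mor}\,\bI f\times_{\bf{Ob}\,\sX}\bf{Mor}\,\bI f\rtarr \bf{Mor}\,\bI f$. This comes from the universal property of the pullback defining $\bf{Mor}\,\bI f$ applied to the pair consisting of $C_\sY$ composed with $\widetilde{m_f}\times \widetilde{m_f}$ on the one hand, and the outer source/target projection on the other; commutativity with $(f\times f)$ uses that $f$ preserves composition. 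The associativity and unit axioms for $\bI f$ then follow by uniqueness in the universal property applied to both sides, and the same uniqueness argument shows that $e_f$ and $m_f$ are $\sV$-functors.

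For product-preservation, I use that in a category with finite limits the product of pullback squares is a pullback square. Thus the pullback defining $\bf{Mor}\,\bI(f\times f')$ and the product of the pullbacks defining $\bf{Mor}\,\bI f$ and $\bf{Mor}\,\bI f'$ represent the same functor, so they are canonically isomorphic compatibly with $e$ and $m$; on objects this is tautological since $\bf{Ob}\,\bI(f\times f') = \bf{Ob}\,(\sX\times \sX') = \bf{Ob}\,\sX\times \bf{Ob}\,\sX'$. Uniqueness in the pullback also guarantees that the isomorphism respects the $\sV$-category structures.

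The only real labor is the bookkeeping in step two: showing that the structure maps $S,T,I,C$ on $\bI f$ that one reads off from the pullback actually satisfy the category axioms and make $e_f$ and $m_f$ commute with $S,T,I,C$. I expect this to be the main, though routine, obstacle; every such check reduces to two applications of uniqueness in the universal property of the pullback defining $\bf{Mor}\,\bI f$, one showing that each axiom holds after postcomposition with $\widetilde{m_f}$ (which follows from the corresponding axiom in $\sY$) and one showing that it holds after postcomposition with $(T,S)$ (which is immediate from the definitions of $S$ and $T$ on $\bI f$).
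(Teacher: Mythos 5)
Your construction is exactly the paper's: objects of $\bI f$ are $\mathbf{Ob}\,\sX$, morphisms are the pullback of $(T,S)$ along $f\times f$, with $e_f$, $m_f$, and the structure maps $S,T,I,C$ induced by the universal property and the category axioms verified by uniqueness in that pullback. Your product-preservation argument (a product of pullback squares is a pullback square, plus closure of $\sB\sO$ and $\sF\sF$ under products) likewise matches what the paper does in \autoref{prod2} and \autoref{prod}, so the proposal is correct and takes essentially the same route.
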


\begin{proof}   Define $\bf{Ob}(\bI f) = \bf{Ob}\sX$ and define 
\[ e_f = \id\colon  {\bf{Ob}}\sX\rtarr {\bf{Ob}}(\bI f) \ \ \text{and} \ \ m_f = f \colon \bf{Ob}(\bI f)\rtarr \bf{Ob}\sY. \]
Define $\bf{Mor}(\bI f)$ via the pullback square displayed in the diagram
\[ \xymatrix{
\bf{Mor}\sX \ar[ddr]_{(T,S)} \ar[drr]^-{f} \ar[dr]_{e_f} & & \\
& \bf{Mor}(\bI f) \ar[r]_-{m_f} \ar[d]^{(T,S)} & \bf{Mor}\sY \ar[d]^{(T,S)}\\
& \bf{Ob}\sX\times \bf{Ob}\sX \ar[r]_-{f\times f} & \bf{Ob}\sY\times \bf{Ob}\sY \\} \]
The diagram displays both the definition of $m_f$ on $\bf{Mor}(\bI f)$ and the construction of $e_f$ on 
$\bf{Mor}\sX$ by application of the universal property of pullbacks. It also displays the definition of $T$ 
and $S$ on $\bf{Mor}(\bI f)$.  Clearly $e_f$ is bijective on objects and $m_f$ is full and faithful, as specified in \autoref{defn:BOFF}.
We must define 
\[ I = e_f\com I\colon \bf{Ob}\sX = \bf{Ob}(\bI f) \rtarr \bf{Mor}(\bI f) \]
for $e_f$ to commute with $I$, and then $m_f$ commutes with $I$ since $f$ commutes with $I$. Noting that the
outer square commutes, the following diagram displays the definition of $C$ in $\bI f$ by application of 
the universal property of pullbacks.
\[ \xymatrix{
\bf{Mor}(\bI f)\times_{\bf{Ob}(\bI f)}\bf{Mor}(\bI f)  \ar[dd]_{T\times S} \ar[rr]^-{m_f\times m_f} \ar[dr]^{C} 
& & \bf{Mor}\sY\times_{\bf{Ob}\sY}\bf{Mor}\sY \ar[d]^C  \\
& \bf{Mor}(\bI f) \ar[r]_-{m_f} \ar[dl]^{(T,S)} & \bf{Mor}\sY \ar[d]^{(T,S)}\\
\bf{Ob}\sX\times \bf{Ob}\sX \ar[rr]_-{f\times f} & & \bf{Ob}\sY\times \bf{Ob}\sY \\} \]
Using that 
\[ (f\times f)\com (T, S) \com C = (T, S)\com C\com (f\times f)\colon 
\bf{Mor}\sX\times_{\bf{Ob}\sX}\bf{Mor}\sX \rtarr \bf{Ob}\sY\times \bf{Ob}\sY, \]
it follows that 
\[ C\com (e_f\times e_f) = e_f\com C\colon \bf{Mor}\sX\times_{\bf{Ob}\sX}\bf{Mor}\sX\rtarr \bf{Mor}(\bI f).\]
That $C$ is associative and unital on ${\bI f}$ follows from the pullback definition of $C$. 
\end{proof}

\begin{prop}\label{enfact2}
For a diagram in $\bf{Cat}(\sV)$ of the form
\[ \xymatrix{
\sA \ar[r]^{e} \ar[d]_-{v} \drtwocell<\omit>{<0> \, \varphi} & \sX \ar[d]^{u} \\
\sB \ar[r]_-{m} & \sY,  \\} \]
where $e$ is in $\sB\sO$, $m$ is in $\sF\sF$, and $\varphi$ is an invertible $\sV$-transformation, 
there is a unique 
$\sV$-functor $w\colon \sX\rtarr \sB$ and a unique invertible $\sV$-transformation $\widetilde{\varphi}\colon u\Longrightarrow m\com w$ 
such that $w\com e = v$ and $\widetilde{\varphi} \com e = \varphi$. 
\[ 
\xymatrix{
\sA \ar[rr]^-{e} \ar[d]_-{v} & & \sX \ar[d]^{u} \ar[dll]_{w}  \dtwocell<\omit>{<3>  \widetilde{\varphi} }  \\
\sB \ar[rr]_-{m}  & &  \sY \\} 
\]
\end{prop}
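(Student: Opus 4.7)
Because $e\in \sB\sO$, the $\sV$-map $e\colon \bf{Ob}\sA\rtarr \bf{Ob}\sX$ is an isomorphism, and the conditions $w\com e=v$ and $\widetilde{\varphi}\com e=\varphi$ force the object-level definitions
\[ w = v\com e^{-1}\colon \bf{Ob}\sX\rtarr \bf{Ob}\sB, \qquad \widetilde{\varphi} = \varphi\com e^{-1}\colon \bf{Ob}\sX\rtarr \bf{Mor}\sY. \]
Invertibility of $\varphi$ transports to $\widetilde{\varphi}$, with inverse $\widetilde{\varphi}^{-1} = \varphi^{-1}\com e^{-1}$. Thus everything on objects is already determined.

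To define $w$ on morphisms I would exploit the pullback square in \autoref{defn:BOFF} that defines $m$ as full and faithful. I first build a ``conjugate'' morphism map
\[ \widetilde{u}\colon \bf{Mor}\sX \rtarr \bf{Mor}\sY \]
as the internal composition in $\sY$ assembled out of $\widetilde{\varphi}\com T$, $u$ on morphisms, and $\widetilde{\varphi}^{-1}\com S$; on elements this is $\widetilde{\varphi}_{Th}\com u(h)\com \widetilde{\varphi}_{Sh}^{-1}$. A small diagram chase using the source/target condition $(S,T)\com \widetilde{\varphi} = (u, mw)$ shows that $\widetilde{u}$ and the pair $(wT, wS)\colon \bf{Mor}\sX \rtarr \bf{Ob}\sB\times \bf{Ob}\sB$ agree after postcomposition with $(T,S)$ and $m\times m$ respectively, so the pullback property of $m$ produces a unique $w\colon \bf{Mor}\sX\rtarr \bf{Mor}\sB$ with $m\com w = \widetilde{u}$ and $(T,S)\com w = (wT, wS)$.

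The remaining verifications are that $w$ is a $\sV$-functor, that $\widetilde{\varphi}$ is a $\sV$-transformation, and that $w\com e = v$ and $\widetilde{\varphi}\com e = \varphi$. Compatibility of $w$ with $S$ and $T$ is immediate. Compatibility with $I$ and $C$ reduces, via the pullback uniqueness characterizing $m\in\sF\sF$, to the corresponding identities after postcomposition with $m$: for $I$, $\widetilde{u}\com I = I\com mw$ because conjugation fixes identity morphisms; for $C$, one inserts a cancelling pair $\widetilde{\varphi}^{-1}\widetilde{\varphi}$ in the middle of a two-fold composition to see that $\widetilde{u}\com C = C\com (\widetilde{u}\times_{\bf{Ob}\sX} \widetilde{u})$. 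The $\sV$-transformation axiom \autoref{vtrans2} for $\widetilde{\varphi}$ is then precisely the identity $mw(h) = \widetilde{\varphi}_{Th}\com u(h)\com \widetilde{\varphi}_{Sh}^{-1}$ rewritten as a naturality square, and it already holds in $\sY$ by construction. Finally, $w\com e = v$ on morphisms is another pullback uniqueness check, whose $\bf{Mor}\sY$-component reduces to $\varphi = \widetilde{\varphi}\com e$; uniqueness of the whole pair $(w,\widetilde{\varphi})$ follows because every step of the construction was forced by the hypotheses or by the pullback defining $m\in\sF\sF$.

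The main obstacle is the compatibility of $w$ with composition. Turning the informal ``insert cancelling pair'' argument into an honest diagram at the internal-category level requires an explicit pasting in $\bf{Mor}\sY$ that uses associativity of $C$, functoriality of $u$, and the invertibility of $\widetilde{\varphi}$, all phrased via pullbacks in $\sV$. The remaining axioms become lightweight bookkeeping once this centerpiece is in place.
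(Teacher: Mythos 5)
Your proposal is correct and takes essentially the same route as the paper: your conjugate map $\widetilde{u} = C\com(\widetilde{\varphi}\com T,\, u,\, \widetilde{\varphi}^{-1}\com S)$ is exactly the composite the paper feeds into the pullback defining $\bf{Mor}\sB$ via $m\in\sF\sF$, with the second leg $(w\com T, w\com S)$, and your checks of functoriality, naturality of $\widetilde{\varphi}$, the identity $w\com e = v$, and uniqueness all proceed by the same pullback-uniqueness arguments the paper uses. The internalization details you flag as the main obstacle (rephrasing the ``insert a cancelling pair'' cancellation via $C$-associativity inside the relevant pullbacks) are likewise left at the level of diagram chases in the paper's proof.
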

\begin{proof}
Since $e \in \sB\sO$, we can and must define $w$ and $\widetilde{\varphi}$ on objects by
\[ w = v\com {e}^{-1}\colon \bf{Ob}\sX \rtarr \bf{Ob}\sB \]
and
\[ \widetilde{\varphi} = \varphi\com {e}^{-1}\colon \bf{Ob}{\sX} \rtarr \bf{Mor}\sY. \]
We define $w$ on morphisms by use of the following diagram, noting that the
lower right square is a pullback since $m$ is in $\sF\sF$. 

{\small

\[ \xymatrix{
\bf{Ob}\sX\times \bf{Mor}\sX \times \bf{Ob}\sX \ar[rr]^-{e^{-1}\times \id \times e^{-1}} & &
\bf{Ob}\sA\times \bf{Mor}\sX \times \bf{Ob}\sA \ar[d]^{\varphi\times u \times \varphi^{-1}}\\
\bf{Mor}\sX \ar[d]_{(T,S)} \ar[u]^{(T,\id,S)}  \ar[dr]^{w}
& & \bf{Mor}\sY\!\times_{\bf{Ob}\sY}\! \times \bf{Mor}\sY\!\times_{\bf{Ob}\sY}\! \bf{Mor}\sY \ar[d]^C \\
\bf{Ob}\sX\times \bf{Ob}\sX \ar[d]_{e^{-1}\times e^{-1}} \ar[dr]^{w\times w} & \bf{Mor}\sB \ar[d]_{(T,S)}\ar[r]^m
& \bf{Mor}\sY \ar[d]^{(T,S)}\\
\bf{Ob}\sA\times \bf{Ob}\sA \ar[r]_-{v\times v} 
& \bf{Ob}\sB\times \bf{Ob}\sB \ar[r]_-{m\times m} & \bf{Ob}\sY\times \bf{Ob}\sY
\\}\]

}

\noindent
Using that on objects, $u = u\com e\com e^{-1}$, $S\com \varphi = u\com e$, $T\com \varphi = m\com v$,
and thus $S\com \varphi^{-1} = m\com v$, and $T\com \varphi^{-1} = u\com e$, we see that $C$ is well-defined
and the outer rectangle commutes; for the latter we also use the axioms of a $\sV$-transformation. 
The universal property of pullbacks gives $w$. The diagram shows that $w$ commutes with $S$ and $T$, 
and it is easy to check that it also commutes with $I$ and $C$. Precomposing with
$e\colon \bf{Mor}\sA \rtarr \bf{Mor}\sX$ and using the coherence axioms for $\sV$-transformations and the universal 
property of pullbacks, we see that $w\com e = v$ on morphisms, so that there is an equality
$w\com e = v$ of $\sV$-functors.  Clearly 
\[ S\com \widetilde{\varphi} = S\com \varphi \com e^{-1} = u\com e \com e^{-1} = u \]
and 
\[ T\com \widetilde{\varphi} = T\com \varphi \com e^{-1} = m\com v\com e^{-1} = m\com w.\] 
We check that the diagram (\ref{vtrans2}) required of a $\sV$-transformation commutes, 
so that $\widetilde{\varphi}\colon u\Longrightarrow m\com w$, by use of the pullback definition of $w$ on morphisms. 
It is also easy to check that $\widetilde{\varphi}$ is invertible 
with inverse $\varphi^{-1}\com e^{-1}\colon \bf{Ob}\sX\rtarr \bf{Mor}\sY$.  The uniqueness of $w$ on 
morphisms follows from the pullback description of $\bf{Mor}\sB$, checking that $w$ must have the
maps to $\bf{Mor}\sY$ and $\bf{Ob}\sB\times \bf{Ob}\sB$ displayed in the diagram above.
\end{proof}

\begin{prop}\label{enfact3}
With the notation of \autoref{enfact2}, suppose that $m\com v = u\com e$ and a second pair of $\sV$-functors $v'\colon \sA \rtarr \sB$ and $u'\colon \sX \rtarr \sY$ is given
such that $m\circ v' = u'\circ e$, together with $\sV$-transformations $\si \colon v \Rightarrow v'$ and $\ta \colon u\Rightarrow u'$ such that 
$\ta \circ e= m\circ \sigma$.  Then there exists a unique $\sV$-transformation $\rho \colon  w \Rightarrow w'$ such that 
\[
\xymatrix{
\sA \ar[rr]^e \ar[d]_e & & \sX \ar@{=}[dll] \ar[d]^w \dtwocell<\omit>{<3> \, \rho} \\
\sX \ar[rr]_{w'} & & \sB
} \quad \raisebox{-4.3ex}{=} \quad
\xymatrix{
\sA \ar[rr]^e \ar[d]_e \drrtwocell^v_{v'}{\sigma} & & X  \ar[d]^w \\
\sX \ar[rr]_{w'} & & \sB
} 
\]
and
\[
\xymatrix @R=2.05ex {
\sX \ar[rr]^w \ar[dd]_{w'} & \dtwocell<\omit>{<3> \, \rho} & \sB \ar@{=}[ddll] \ar[dd]^m  \\
 & & \\
\sB \ar[rr]_{m} & & \sY
} \quad \raisebox{-4.3ex}{=} \quad
\xymatrix {
\sX \ar[rr]^w \ar[d]_{w'} \drrtwocell^u_{u'}{\tau} & & \sB  \ar[d]^m \\
\sB \ar[rr]_{m} & & \sY.
} 
\]
\end{prop}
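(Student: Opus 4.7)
The plan is to imitate the construction in \autoref{enfact2}: the values of $\rho$ on objects are forced, and then we use the full and faithful hypothesis on $m$ to verify the naturality axioms for free.

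First, since $e$ is bijective on objects, the equation $\rho \circ e = \sigma$ (viewed as an equality of maps $\bf{Ob}\,\sA \rtarr \bf{Mor}\,\sB$) forces the definition
\[
\rho \;=\; \sigma \circ e^{-1} \colon \bf{Ob}\,\sX \rtarr \bf{Mor}\,\sB.
\]
This makes uniqueness automatic, since a $\sV$-transformation is by \autoref{Vtrans} nothing more than its map on objects. The first order of business is to check that this $\rho$ is actually a $\sV$-transformation from $w$ to $w'$. Condition \autoref{vtrans1} amounts to $S \circ \rho = w$ and $T \circ \rho = w'$, which follow by composing the $\sV$-transformation identities $S \circ \sigma = v$ and $T \circ \sigma = v'$ with $e^{-1}$ and then using $w \circ e = v$ and $w' \circ e = v'$ on objects (these hold since $\widetilde{\varphi}$ in \autoref{enfact2} is the identity when $\varphi$ is, by the uniqueness clause of \autoref{EFS}(ii), which applies here because $m\circ v=u\circ e$).

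Next I would verify $m \circ \rho = \tau$ on objects:
\[
m \circ \rho \;=\; m \circ \sigma \circ e^{-1} \;=\; \tau \circ e \circ e^{-1} \;=\; \tau,
\]
using the hypothesis $\tau \circ e = m \circ \sigma$.

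The main obstacle is verifying the composition axiom \autoref{vtrans2} for $\rho$, and this is exactly where full and faithfulness of $m$ does the work. Both sides of \autoref{vtrans2} for $\rho$ are maps $\bf{Mor}\,\sX \rtarr \bf{Mor}\,\sB$. Since $m$ is in $\sF\sF$, the square in \autoref{defn:BOFF} is a pullback, so equality of two maps into $\bf{Mor}\,\sB$ may be tested by composing with $m\colon \bf{Mor}\,\sB \rtarr \bf{Mor}\,\sY$ and with $(T,S)\colon \bf{Mor}\,\sB \rtarr \bf{Ob}\,\sB\times \bf{Ob}\,\sB$. The projection to $\bf{Ob}\,\sB\times \bf{Ob}\,\sB$ sends either side to $(w'\circ T,\, w\circ S)$ by the \autoref{vtrans1} identities already verified, so those coincide trivially. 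Postcomposing with $m$ turns $\rho$ into $\tau$ (just checked), $w$ into $u$, and $w'$ into $u'$, so what remains is precisely \autoref{vtrans2} for the known $\sV$-transformation $\tau\colon u \Rightarrow u'$, which holds by hypothesis. This completes the verification.

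Finally, the two pasting equalities in the statement are exactly $\rho \circ e = \sigma$ and $m \circ \rho = \tau$, both established above, so $\rho$ has the required properties and is unique.
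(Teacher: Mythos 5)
Your proposal is correct and follows essentially the same route as the paper: the definition $\rho = \sigma \circ e^{-1}$ forced by bijectivity of $e$ on objects, the computation $m\circ\rho = \tau$, and the verification of \autoref{vtrans2} by using the pullback defining $\mathbf{Mor}\,\sB$ to test equality after composing with $S$, $T$, and $m$ (the $m$-factor reducing to naturality of $\tau$, exactly as in the paper). The only cosmetic difference is your parenthetical appeal to $\widetilde{\varphi}$ being the identity, which the paper leaves implicit via the uniqueness clause of \autoref{EFS}(ii).
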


\begin{proof}
Since $\rho \circ e=\si$ and $e$ is bijective on objects, we must define $\rho$ by
\[\rho\:=\si \circ e^{-1}\colon \bf{Ob}\sX \rtarr \bf{Mor}\sB.\]
Then 
\[m\circ \rho = m\circ \si \circ e^{-1}=\ta \circ e \circ e^{-1} = \ta.\]
It remains to show that $\rho$ is indeed a $\sV$-transformation from $w$ to $w'$. The map $\rho$ satisfies \autoref{vtrans1}, since
\[S\circ \rho = S\circ \si \circ e ^{-1} = v\circ e^{-1}=w \quad \text{and} \quad T\circ \rho = T\circ \si \circ e ^{-1} = v'\circ e^{-1}=w'.\]

To prove that $\rho$ satisfies \autoref{vtrans2}, one uses that $m$ is full and faithful, and hence that $\bf{Mor}\sB$ is a pullback, to prove that the two maps $\bf{Mor}\sX \rtarr \bf{Mor}\sB$ are equal.
In more detail, it suffices to show that the two composites 
\[\xymatrix{
\bf{Mor} \sX \ar@<-.4ex>[r] \ar@<.4ex>[r]   & \bf{Mor}\sB \ar[r]^-{S,T,m} &  \bf{Ob}\sB \times \bf{Ob}\sB \times \bf{Mor}\sY
}\]
agree, and for this it suffices to show that the composites agree after applying each of the three projections. For the factor $S$, this follows from the equality $w = v\circ e^{-1}$ on objects, and a similar argument applies for the factor $T$. For the factor $m$, this follows from the equality $m\circ\rho = \tau$ and the naturality of $\tau$.
\end{proof}

\begin{prop}\label{enfact4} Suppose given $\sV$-functors $f\colon \sX\rtarr \sY$, $m\colon \sY\rtarr \sX$, 
and an invertible $\sV$-transformation $\io \colon \id \Longrightarrow m\com f$, where $m$ is in $\sF\sF$.
Then there is an invertible $\sV$-transformation $\nu\colon f\com m \Longrightarrow \id$.
\end{prop}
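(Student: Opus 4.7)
The key input is that the condition $m\in\sF\sF$ of \autoref{defn:BOFF} exhibits $\bf{Mor}\sY$ as the pullback of $(T,S)\colon\bf{Mor}\sX\rtarr\bf{Ob}\sX\times\bf{Ob}\sX$ along $m\times m$. Thus any map into $\bf{Mor}\sY$ is determined by its $(T,S)$-projection together with its postcomposition with $m$, and any compatible pair lifts uniquely. The plan is to produce $\nu$ by lifting the whiskering $\io^{-1}\com m\colon m\com f\com m\Longrightarrow m$ along this pullback.

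Concretely, the map $\io^{-1}\com m\colon \bf{Ob}\sY\xrightarrow{m}\bf{Ob}\sX\xrightarrow{\io^{-1}}\bf{Mor}\sX$ satisfies
\[ (T,S)\com(\io^{-1}\com m)=(m,\, m\com f\com m)=(m\times m)\com(\id_{\bf{Ob}\sY},\,f\com m), \]
so the pullback property supplies a unique $\nu\colon\bf{Ob}\sY\rtarr\bf{Mor}\sY$ in $\sV$ with $(T,S)\com\nu=(\id,\,f\com m)$ and $m\com\nu=\io^{-1}\com m$. The first identity is precisely \autoref{vtrans1} for a putative transformation $\nu\colon f\com m\Longrightarrow\id_\sY$.

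To verify \autoref{vtrans2} for $\nu$, observe that both composites $\bf{Mor}\sY\rightrightarrows\bf{Mor}\sY$ appearing there share the same $(T,S)$-projection $(T,\,f\com m\com S)$. Postcomposing either one with $m$ and using that the $\sV$-functor $m$ satisfies $m\com C=C\com(m\times m)$ reduces each to the corresponding composite in the naturality diagram \autoref{vtrans2} for the $\sV$-transformation $\io^{-1}\com m\colon m\com f\com m\Longrightarrow m$, and those two composites are equal. The pullback property then forces the two composites in $\bf{Mor}\sY$ to agree, so $\nu$ is a $\sV$-transformation. Applying the same construction to $\io\com m\colon m\Longrightarrow m\com f\com m$ yields a $\sV$-transformation $\nu'\colon\id_\sY\Longrightarrow f\com m$ characterized by $(T,S)\com\nu'=(f\com m,\,\id)$ and $m\com\nu'=\io\com m$. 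The vertical composite $\nu\ast\nu'$ has $(T,S)$-projection $(\id,\id)$, and
\[ m\com(\nu\ast\nu')\;=\;(m\com\nu)\ast(m\com\nu')\;=\;(\io^{-1}\ast\io)\com m\;=\;I\com m\;=\;m\com I_\sY, \]
so the pullback property gives $\nu\ast\nu'=I_\sY$. The reverse composite is handled identically, producing $\nu'=\nu^{-1}$.

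The main subtlety is that, as noted after \autoref{mono}, $\sF\sF$-morphisms in $\bf{Cat}(\sV)$ are generally \emph{not} categorical monomorphisms, so the entire argument must be phrased through the pullback universal property of $\bf{Mor}\sY$ rather than by any naive cancellation of $m$. Once the source/target bookkeeping is arranged to match the pullback square of \autoref{defn:BOFF}, each step is routine and amounts to the internal analogue of the familiar $\bf{Set}$-level observation that a fully faithful functor admitting a pseudo-section in fact admits an inverse up to isomorphism.
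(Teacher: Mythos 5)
Your proof is correct and takes essentially the same approach as the paper: both construct $\nu$ by lifting the whiskering $\io^{-1}\com m$ through the pullback square witnessing $m\in\sF\sF$, so that $(T,S)\com\nu=(\id,f\com m)$ and $m\com\nu=\io^{-1}\com m$, and both verify \autoref{vtrans2} by checking the two composites agree after applying $S$, $T$, and $m$, using functoriality of $m$ and naturality of $\io^{-1}$. Your explicit appeal to the pullback's uniqueness to get $\nu\ast\nu'=I_\sY$ and $\nu'\ast\nu=\id_{f\com m}$ is simply a spelled-out version of the paper's construction of $\nu^{-1}$ from $\io$ and $(f\com m,\id)$ and its ``straightforward'' inverse check.
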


\begin{proof} The required $\sV$-natural transformation $\nu$ is given by the universal property of pullbacks 
applied in the diagram 
\[ \xymatrix{
\bf{Ob}\sY \ar[rr]^-{m} \ar[dr]^{\nu} \ar[ddr]_{(\id,f\com m)} & & \bf{Ob}\sX \ar[d]^{\io^{-1}}  \\
& \bf{Mor}\sY \ar[r]^-m \ar[d]^{(T,S)} & \bf{Mor}\sX \ar[d]^{(T,S)}\\
& \bf{Ob}\sY \times \bf{Ob}\sY  \ar[r]_{m\times m} & \bf{Ob}\sX \times \bf{Ob}\sX. \\} \]
where the outer parts of the diagram are easily seen to commute. The map $\nu$ satisfies \autoref{vtrans1} by the commutativity of the left triangle in the above diagram.
For the naturality diagram \eqref{vtrans2} of $\nu$, we must verify that two morphisms \mbox{$\bf{Mor} \sY \rtarr \bf{Mor} \sY$} agree. 
As in the proof of \autoref{enfact3}, since $m$ is in $\sF\sF$, it suffices to check that the two morphisms agree after applying $S$, $T$, and $m$. The cases of $S$ and $T$ are simple, and the case of $m$ follows from functoriality of $m$ and naturality of $\iota^{-1}$.

We obtain $\nu^{-1}$ similarly, replacing $\io^{-1}$ by $\io$ and $(\id,f\com m)$ by $(f\com m,\id)$
in the above diagram, and it is straightforward to check that the $\sV$-transformations $\nu$
and $\nu^{-1}$ are  inverse to each other.
\end{proof} 

In language to be introduced shortly (\autoref{IntEquiv}), the conclusion of \autoref{enfact4} can be promoted to the statement that $(f, m, \io,\nu)$ prescribes an internal equivalence between $\sX$ and $\sY$.

\section{The Power Lack strictification theorem}\label{PL}

\subsection{The statement of the strictification theorem}\label{PL1}

We return to an arbitrary $2$-category $\sK$.  We need some $2$-categorical preliminaries to make sense of the statement of the Power-Lack strictification theorem.

\begin{defn}\label{IntEquiv} An {\em internal equivalence} between objects ($0$-cells) $X$ and $Y$ of $\sK$
is given by $1$-cells $f\colon X\rtarr Y$ and $g\colon Y\rtarr X$
and invertible $2$-cells $\et\colon \id \Longrightarrow g\com f$ and 
$\epz\colon f\com g\Longrightarrow \id$; it is an adjoint equivalence if $\et$ and $\epz$ are the
unit and counit of an adjunction (the evident triangle identities hold). Given an (internal) equivalence 
$(f,g,\et,\epz)$,
we can replace $\epz$ by the composite
\[ \xymatrix@1{
f\com g \ar@{=>}[rr]^-{\epz^{-1}\com f\com g} & &f\com g \com f\com g \ar@{=>}[rr]^-{f\com \et^{-1}\com g}
& &  f\com g  \ar@{=>}[r]^-{\epz} & \id 
\\}\]
and so obtain an adjoint equivalence.\footnote{The proof is an elaboration of the proof of Theorem 1 in \cite[IV.4]{CatWork}.}
\end{defn}

The following observation is a variant of a result of Kelly \cite[\S3]{Kelly}. We give it in full
generality for consistency with the literature, but when we use it our conventions require us to restrict to  normal $\bT$-pseudoalgebras,  for which $\upsilon$ is the identity.

\begin{lem}\label{LiftBit} Let $\bT$ be a $2$-monad on $\sK$ and let 
\[ (f,\ze)\colon (X,\tha,\varphi,\upsilon)\rtarr (Y,\xi,\ps,\nu) \]
be a $\bT$-pseudomorphism between $\bT$-pseudoalgebras. If $f\dashv g$ is an adjoint 
equivalence in $\sK$, then the adjunction lifts to an adjoint 
equivalence in the $2$-category $\bT$-$\bf{PsAlg}$ of $\bT$-pseudoalgebras, 
$\bT$-pseudomorphisms, and algebra 2-cells.
By symmetry, the analogous statement with the roles of $f$ and $g$ reversed is also true.
\end{lem}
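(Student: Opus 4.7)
The plan is to transport $\ze$ across the adjunction via the mate correspondence to produce the pseudomorphism structure on $g$, then verify that $\et$ and $\epz$ are algebra $2$-cells; the triangle identities descend automatically, since algebra $2$-cells compose as underlying $2$-cells in $\sK$.

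Since $\bT$ is a $2$-functor it preserves adjunctions, so $\bT f\dashv \bT g$ with unit $\bT\et$ and counit $\bT\epz$. I would define the pseudomorphism structure on $g$ to be the mate of $\ze$,
\[ \ka \, := \, (g\circ \xi \circ \bT\epz) \cdot (g\circ \ze^{-1}\circ \bT g) \cdot (\et \circ \tha\circ \bT g) \colon \tha\circ \bT g \Longrightarrow g\circ \xi, \]
which is invertible since each of its three factors is. Its normality, $\ka\circ \io_Y = \id_g$, follows from the $2$-naturality of $\io$, the normality of $\ze$, and the triangle identities for $f\dashv g$.

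Next, verify that $(g,\ka)$ satisfies the coherence axiom of \autoref{Tpseudo2}. This is a pasting manipulation: substitute the defining pasting for each $\ka$, use $2$-naturality of $\mu$ together with the $2$-functoriality of $\bT$ to move the adjunction cells through, then apply the axiom for $\ze$ and the two triangle identities (one in $\sK$, one in $\bT(\sK)$). This is a standard consequence of the fact that the mate correspondence sends pseudomorphism structures to pseudomorphism structures.

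Finally, check that $\et\colon (\id_X,\id)\Longrightarrow (g\circ f,\ze')$ and $\epz\colon (f\circ g,\ze'')\Longrightarrow (\id_Y,\id)$ are algebra $2$-cells, where $\ze' = (g\circ \ze)\cdot (\ka\circ \bT f)$ and $\ze'' = (f\circ \ka)\cdot (\ze \circ \bT g)$ are the pseudomorphism structures obtained by composing $(f,\ze)$ with $(g,\ka)$ in the two orders. By \autoref{Tpseudo3} this amounts to two pasting identities; once $\ka$ is expanded via its definition, each reduces to a triangle identity for $f\dashv g$ together with $\ze\cdot \ze^{-1}=\id$, which is precisely the universal property of the mate. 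The triangle identities for the lifted adjunction are then identities of underlying $2$-cells in $\sK$ and so carry over, and $\et,\epz$ remain invertible because they are invertible in $\sK$. The symmetric statement follows by exchanging the roles of $f$ and $g$ throughout. The main obstacle will be the size of the pasting diagrams in the second and third steps; conceptually, however, every step is a routine instance of the mate calculus in the $2$-functor $\bT$.
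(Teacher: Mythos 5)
Your construction coincides with the paper's: the $2$-cell $\ka = (g\com \xi\com \bT\epz)\cdot(g\com \ze^{-1}\com \bT g)\cdot(\et\com \tha\com \bT g)$ is exactly the pasting composite the paper uses to define the pseudomorphism structure on $g$ (the mate of $\ze$, in line with the Kelly reference the paper itself invokes), and both you and the paper then settle the coherence axiom for $(g,\ka)$ and the algebra-$2$-cell conditions for $\et$ and $\epz$ by routine diagram chases. The proposal is correct and takes essentially the same approach as the paper.
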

\begin{proof} Since $\ze$ is an invertible $2$-cell, we can define 
$\ka\colon \tha\com \bT g\Longrightarrow g\com \xi$ 
to be the composite $2$-cell
\[ \xymatrix @C=6em {
\bT Y \ar[r]^-{\bT g} \ar[d]_{\id} \dtwocell<\omit>{<-3> \, \bT \epz} & \bT X \ar[d]^{\tha} \ar[dl]^-{\bT f}  \ddtwocell<\omit>{<9> \, \ze^{-1}}\\
\bT Y \ar[d]_-{\xi}  &   X \ar[d]^{\id} \ar[dl]^-f \dtwocell<\omit>{<4> \, \eta} \\
 Y \ar[r]_g & X \\} \]
Diagram chases show that $(g,\ka)$ is a morphism of $\bT$-pseudoalgebras 
and that $\et$ and $\epz$ are algebra $2$-cells. 
\end{proof}

\begin{defn}
Let $(\cE,\sM)$ be an EFS on $\sK$.
A monad $\bT$ on $\sK$ is said to {\em preserve} $\cE$ if whenever $e$ is a $1$-cell in $\cE$, then $\bT e$ is also a $1$-cell in $\cE$.
\end{defn} 

We repeat the statement of the strictification theorem for the reader's convenience. 

\begin{thm}\label{Lack} Let $\sK$ have a rigid enhanced factorization system $(\mathcal{E},\sM)$ and let $\bT$ be a monad in $\sK$ which preserves $\cE$.
Then the inclusion of $2$-categories
\[ \bJ\colon \ets \rtarr \tpa  \]
has a left 2-adjoint strictification $2$-functor $\mathbf{St}$, and the component of the unit 
of the adjunction is an internal equivalence in $\tpa$.
\end{thm}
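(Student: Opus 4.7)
My plan is to strictify by factoring the pseudoalgebra structure map through the EFS, transport the action across this factorization to obtain a strict algebra, derive the unit from the monad unit, and then invoke rigidity to promote the resulting projection to an equivalence.

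First, given a normal $\bT$-pseudoalgebra $(X,\theta,\varphi)$, I would use \autoref{EFS}(i) to factor $\theta\colon \bT X\rtarr X$ as $\bT X\xrightarrow{e_X}\mathbf{St}(X)\xrightarrow{m_X}X$ with $e_X\in\cE$ and $m_X\in\sM$. To make $\mathbf{St}(X)$ into a strict $\bT$-algebra, I would apply \autoref{EFS}(ii) to the square whose top edge is $\bT e_X\in\cE$ (using the hypothesis that $\bT$ preserves $\cE$), right edge $m_X\in\sM$, left edge $v=e_X\com\mu_X\colon\bT^2 X\rtarr\mathbf{St}(X)$, and bottom edge $u=\theta\com\bT m_X\colon\bT\mathbf{St}(X)\rtarr X$; the filler is the invertible 2-cell $\varphi\colon u\com\bT e_X=\theta\com\bT\theta\Longrightarrow\theta\com\mu_X=m_X\com v$. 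The lift is the desired strict action $\bar\theta\colon\bT\mathbf{St}(X)\rtarr\mathbf{St}(X)$, and the resulting invertible $\widetilde\varphi\colon\theta\com\bT m_X\Longrightarrow m_X\com\bar\theta$ immediately exhibits $(m_X,\widetilde\varphi)$ as a $\bT$-pseudomorphism. Strict unitality and associativity of $\bar\theta$ then follow from the uniqueness of lifts in \autoref{EFS}(ii): two parallel 1-cells $\bT^n\mathbf{St}(X)\rtarr\mathbf{St}(X)$ that agree after precomposition with $\bT^n e_X\in\cE$ and after postcomposition with $m_X\in\sM$ (with compatible 2-cells) must coincide. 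The inputs feeding this machine are the monad unit and associativity laws, the normality equation $\theta\com\iota_X=\id_X$, and the pseudoalgebra pasting axiom of \autoref{Tpseudo}.

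Next, I would take the unit of the adjunction to be $\eta_X=e_X\com\iota_X\colon X\rtarr\mathbf{St}(X)$. Normality yields $m_X\com\eta_X=\theta\com\iota_X=\id_X$, so rigidity axiom (iv) of the EFS supplies an invertible 2-cell $\eta_X\com m_X\cong\id_{\mathbf{St}(X)}$ and $(\eta_X,m_X)$ is an internal equivalence in $\sK$. The standard trick recalled in \autoref{IntEquiv} promotes it to an adjoint equivalence. Since $m_X$ is already a pseudomorphism, \autoref{LiftBit} then lifts the adjoint equivalence to $\tpa$, simultaneously endowing $\eta_X$ with its pseudomorphism structure and exhibiting it as an internal equivalence in $\tpa$.

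Finally, to establish the 2-adjunction $\mathbf{St}\dashv\bJ$, I would make $\mathbf{St}$ 2-functorial on pseudomorphisms and algebra 2-cells by analogous applications of \autoref{EFS}(ii) and (iii), and for each strict algebra $(Y,\xi)$ and pseudomorphism $(f,\zeta)\colon X\rtarr\bJ Y$ construct the transpose $\tilde f\colon\mathbf{St}(X)\rtarr Y$ as the unique lift given by \autoref{EFS}(ii) applied to the square with top $e_X$, left $m_X$, right $\xi\com\bT f$, bottom $f$, and filler $\zeta$. Strictness of $\tilde f$ and the extension of the resulting bijection to an isomorphism of hom-categories come from the uniqueness clauses in \autoref{EFS}(ii) and (iii). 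The main obstacle will be verifying strict associativity of $\bar\theta$: the three-face pasting axiom of \autoref{Tpseudo} must be reassembled, after precomposing with $\bT^2 e_X\in\cE$ and postcomposing with $m_X\in\sM$, into an equality of two distinct lifting data, and matching the pasting of $\widetilde\varphi$ with $\bT\widetilde\varphi$ against the whiskering of $\varphi$ by $\mu$ is where the bookkeeping is most delicate.
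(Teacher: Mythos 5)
Your proposal matches the paper's proof essentially step for step: the same EFS factorization $\bT X \xrightarrow{e_\theta} \mathbf{St}X \xrightarrow{m_\theta} X$, the same lift of $\varphi$ along $\bT e_\theta$ (using that $\bT$ preserves $\cE$) to obtain the strict action with strictness forced by the uniqueness clause of \autoref{EFS}(ii), the same unit $e_\theta \com \io_X$ made into an internal equivalence in $\tpa$ via rigidity, \autoref{IntEquiv}, and \autoref{LiftBit}, and the same characterization of the transpose $\widetilde f$ as the unique \autoref{EFS}(ii) lift (your square is mislabeled --- the filler $\zeta$ compares $f\com m_\theta\com e_\theta = f\com\theta$ with $\xi\com\bT f$, with the $\sM$-edge the identity of $Y$ --- but the intended lifting problem is the paper's). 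The only organizational difference is that the paper shortcuts the final verifications with \autoref{cute}, deducing the $2$-adjunction from the hom-category isomorphism $\ets(\mathbf{St}X,Z)\iso\tpa(X,Z)$ rather than checking $2$-functoriality of $\mathbf{St}$ directly as you propose; both routes work.
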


\begin{rem}\label{counitissue}
For a strict $\bT$-algebra $X$, the counit $\epz\colon \mathbf{St} \bJ X\rtarr X$ is a map in $\ets$, but when we view it via $\bJ$ as a map in $\tpa$, it is an internal equivalence, with inverse given by the unit.  Note that the counit is not necessarily an equivalence in $\ets$. 
\end{rem}

To apply \autoref{Lack} to prove \autoref{ConjOut1}, it remains only to verify its hypothesis on
the relevant monads.

\begin{prop}  For any operad $\sO$ in $\bf{Cat}(\sV)$, the monad $\bO$ of \autoref{OMon}
preserves $\sB\sO$. 
\end{prop}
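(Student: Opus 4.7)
The plan is to reduce the question to a statement about plain $\sV$-objects by showing that $\bf{Ob}$ commutes with the construction defining $\bO$. Recall from \autoref{Vembed} that the discrete-category inclusion $i\colon \sV \to \bf{Cat}(\sV)$ is left adjoint to $\bf{Ob}$, and from \autoref{chaadj} that the chaotic category functor $\mathcal{E}$ is right adjoint to $\bf{Ob}$. Thus $\bf{Ob}$ sits in a chain of adjoints $i\dashv \bf{Ob}\dashv \mathcal{E}$ and so preserves every limit and colimit. The same applies to the based variant $\bf{Ob}\colon \bf{Cat}(\sV_*)\rtarr \sV_*$, which is needed here because $\bO$ is defined on based $\sV$-categories.

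The key step is to write the construction $\bO(\sA) = \sO\otimes_{\LA}\sA^{\bullet}$ of \autoref{OMon} as a coend, expressible as a coequalizer of coproducts of finite products $\sO(n)\times \sA^n$. Since $\bf{Ob}$ preserves coproducts and coequalizers (as a left adjoint) and finite products (as a right adjoint), one obtains a natural isomorphism
\[ \bf{Ob}(\bO \sA) \iso \bf{Ob}(\sO) \otimes_{\LA} \bf{Ob}(\sA)^{\bullet}. \]
The right-hand side is exactly the monad $\bO_0$ on $\sV_*$ associated, by the analogue of \autoref{OMon}, to the operad $\bf{Ob}(\sO)$ in $\sV$, applied to the based object $\bf{Ob}(\sA)$. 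By naturality of the isomorphism in $\sA$, one gets $\bf{Ob}(\bO f) = \bO_0(\bf{Ob} f)$ for every $\sV$-functor $f\colon \sA \rtarr \sB$.

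From here the conclusion is immediate: if $f$ is bijective on objects, then $\bf{Ob} f$ is an isomorphism in $\sV_*$, and therefore $\bO_0(\bf{Ob} f) = \bf{Ob}(\bO f)$ is an isomorphism because every functor preserves isomorphisms. This is precisely the condition that $\bO f$ lie in $\sB\sO$.

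I do not expect a substantive obstacle. The only care required is in unpacking the coend in $\bf{Cat}(\sV_*)$ and verifying that $\bf{Ob}$ genuinely commutes with the finite products, coproducts, and coequalizers that appear, together with the base-object identifications of \autoref{degen}. All of this is a formal consequence of the double adjunction $i\dashv \bf{Ob}\dashv \mathcal{E}$ and of the identification $\bf{Cat}(\sV)_* \iso \bf{Cat}(\sV_*)$ of \autoref{basedcat}.
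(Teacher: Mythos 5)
Your proposal is correct and is essentially the paper's own argument: both exploit the double adjunction $i\dashv \bf{Ob}\dashv \mathcal{E}$ to show that $\bf{Ob}$ commutes with the colimits and finite limits defining $\bO$, yielding $\bf{Ob}(\bO\sA)\iso(\bf{Ob}\bO)(\bf{Ob}\sA)$ for the monad of the operad $\bf{Ob}\sO$, and then conclude since any functor preserves isomorphisms. The extra detail you supply about unpacking the coend as a coequalizer of coproducts of finite products is exactly what the paper leaves implicit.
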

\begin{proof}  
The objects of the categories $\sO(n)$ give an operad $\bf{Ob}\sO$ in $\sV$.
Since $\bf{Ob}$ is a left adjoint (with right adjoint the chaotic
category functor of \autoref{chaotic0}) and a right adjoint (with left adjoint the discrete category functor, see \autoref{Vembed}) it commutes with colimits and limits.  It follows that the monad $\bf{Ob}\bO$ associated to the operad $\bf{Ob}\sO$
satisfies 
$(\bf{Ob}\bO)(\bf{Ob}\sX)\iso \bf{Ob}(\bO \sX)$. Since any functor, such as $\bf{Ob}\bO$, 
preserves isomorphisms, the conclusion follows for $\bO$. 
\end{proof}

\subsection{The construction of the $2$-functor $\bf{St}$}\label{PL2}

We give the definition of $\bf{St}$ on $0$-cells, $1$-cells, and $2$-cells here and fill in details of omitted proofs in the following section. Given a $\bT$-pseudoalgebra $(X,\tha,\varphi,\upsilon)$, we obtain $\bf{St} X$ as $\bI \tha$. Explicitly,
we factor $\tha$ as the composite
\[\xymatrix{
X\ar[r]^{e_\tha} & \mathbf{St} X \ar[r]^{m_\theta} & X,
}\]
where $e_{\tha}$  is in $\mathcal{E}$ and  $m_{\tha}$ in $\sM$. Noting our assumption that $\bT e_{\tha}$ is in $\mathcal{E}$ and applying \autoref{EFS}(ii) to $\varphi\colon \tha\com \bT \tha \Longrightarrow \tha\com \mu$, we obtain a diagram
\begin{equation}\label{Stone} 
\xymatrix{
\bT\bT X \ar[r]^-{\bT e_\tha} \ar[d]_-{\mu} & \bT \mathbf{St} X \ar[d]^{\bT m_{\tha}} \ar[ddl]_(.45){\mathbf{St} \tha}    \\
\bT X \ar[d]_{e_\tha} & \bT X \ar[d]^\tha  \dtwocell<\omit>{<4>  {\widetilde{\varphi}} } \\
\mathbf{St} X \ar[r]_{m_{\tha}}  &  X \\ }
\end{equation}
in which $\mathbf{St}\tha\com \bT e_{\tha} = e_{\tha}\com \mu$ and $\widetilde{\varphi}\com \bT e_{\tha} = \varphi$.  

\begin{lem}\label{strict}  Let $(X,\tha,\varphi,\upsilon)$ be a $\bT$-pseudoalgebra.  Then
$(\mathbf{St}X,\mathbf{St}\tha)$ is a strict $\bT$-algebra and 
$(m_{\tha},\widetilde{\varphi})\colon (\mathbf{St}X,\mathbf{St} \tha,\id,\id)\rtarr (X,\tha,\varphi,\upsilon)$ is a $\bT$-pseudomorphism. If $(X,\tha)$ is a strict $\bT$-algebra, then $m_{\tha}\colon (\mathbf{St} X,\mathbf{St} \tha)\rtarr (X,\tha)$ is a strict $\bT$-map.
\end{lem}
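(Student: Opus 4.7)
The plan is to derive each assertion from the uniqueness clause of the universal property \autoref{EFS}(ii) of the enhanced factorization system $(\mathcal{E},\sM)$. By the construction in (\ref{Stone}), the pair $(\mathbf{St}\tha,\widetilde{\varphi})$ is the unique filler of an EFS square, so $\mathbf{St}\tha\com \bT e_{\tha} = e_{\tha}\com \mu$ and the invertible $2$-cell $\widetilde{\varphi}\colon \tha\com \bT m_{\tha} \Longrightarrow m_{\tha}\com \mathbf{St}\tha$ satisfies $\widetilde{\varphi}\com \bT e_{\tha} = \varphi$. The strategy for each identity below is to exhibit two candidate fillers of a common EFS square and read off their equality from uniqueness.

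For the unit axiom $\mathbf{St}\tha\com \io_{\mathbf{St}X} = \id$, I apply \autoref{EFS}(ii) to the square whose top and left edges are $e_{\tha}$, whose bottom and right edges are $m_{\tha}$, and whose filler is the identity $2$-cell on $\tha$. One filler is the obvious pair $(\id_{\mathbf{St}X},\id)$. The pair $(\mathbf{St}\tha\com \io_{\mathbf{St}X},\ \widetilde{\varphi}\com \io_{\mathbf{St}X})$ is a second filler: naturality of $\io$, the defining equation of $\mathbf{St}\tha$, and the monad unit law yield
\[\mathbf{St}\tha\com \io_{\mathbf{St}X}\com e_{\tha} = \mathbf{St}\tha\com \bT e_{\tha}\com \io_{\bT X} = e_{\tha}\com \mu\com \io_{\bT X} = e_{\tha};\]
normality of the pseudoalgebra $(X,\tha,\varphi)$ gives $\tha\com \bT m_{\tha}\com \io_{\mathbf{St}X} = \tha\com \io_X\com m_{\tha} = m_{\tha}$, so $\widetilde{\varphi}\com \io_{\mathbf{St}X}$ has the correct source; and $(\widetilde{\varphi}\com \io_{\mathbf{St}X})\com e_{\tha} = \varphi\com \io_{\bT X} = \id$ by the normality clause on $\varphi$. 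Uniqueness then yields $\mathbf{St}\tha\com \io_{\mathbf{St}X} = \id$ and, as a bonus, $\widetilde{\varphi}\com \io_{\mathbf{St}X} = \id$. For the associativity $\mathbf{St}\tha\com \bT\mathbf{St}\tha = \mathbf{St}\tha\com \mu_{\mathbf{St}X}$ I use the analogous argument with the square whose top edge is $\bT\bT e_{\tha}$, which lies in $\mathcal{E}$ since $\bT$ preserves $\mathcal{E}$. Monad associativity and naturality of $\mu$ show the two candidate fillers agree after precomposition with $\bT\bT e_{\tha}$, while the remaining pseudoalgebra coherence axiom of \autoref{Tpseudo} matches the pasted $2$-cells built from $\widetilde{\varphi}$, $\bT\widetilde{\varphi}$, and $\varphi$ on each side; uniqueness then delivers strict associativity.

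The invertibility of $\widetilde{\varphi}$ and the normality $\widetilde{\varphi}\com \io_{\mathbf{St}X} = \id$ required of the pseudomorphism $(m_{\tha},\widetilde{\varphi})$ have already been recorded. The remaining pseudomorphism coherence axiom of \autoref{Tpseudo2} asserts the equality of two $2$-cells $\tha\com \bT\tha\com \bT^2 m_{\tha} \Longrightarrow m_{\tha}\com \mathbf{St}\tha\com \mu_{\mathbf{St}X}$; I verify this by whiskering both sides with $\bT^2 e_{\tha} \in \mathcal{E}$, so that each side reduces, via $\widetilde{\varphi}\com \bT e_{\tha} = \varphi$, to an instance of the remaining pseudoalgebra coherence axiom for $(X,\tha,\varphi)$, and then invoking the uniqueness of $2$-cells from \autoref{EFS}(ii) applied to the resulting square. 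Finally, when $(X,\tha)$ is already strict we may take $\varphi = \id$ in (\ref{Stone}); the identity $2$-cell is then a valid choice for the filler $\widetilde{\varphi}$, and uniqueness forces $\widetilde{\varphi} = \id$, so $m_{\tha}$ is a strict $\bT$-map. The main technical point is the bookkeeping in the associativity and pseudomorphism-coherence arguments, where one must identify the pasted $2$-cells on either side and recognize the remaining pseudoalgebra coherence axiom as exactly what matches their restrictions along an $\mathcal{E}$-morphism; everything else is a direct application of the EFS's universal property.
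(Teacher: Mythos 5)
Your proposal is correct and takes essentially the same route as the paper: both construct $(\mathbf{St}\tha,\widetilde{\varphi})$ via \autoref{EFS}(ii) and then repeatedly exploit its uniqueness clause, using the pseudoalgebra coherence axioms of \autoref{Tpseudo} to match the pasted $2$-cells, to obtain the unit law, strict associativity, the pseudomorphism axioms for $(m_{\tha},\widetilde{\varphi})$, and the strict case. The only differences are presentational: the paper extracts associativity and the pseudomorphism coherence simultaneously from a single application of uniqueness to one pair of pasting diagrams, and it carries a general unit $2$-cell $\upsilon$ through the unit argument, whereas you separate these steps and invoke the standing normality convention $\upsilon=\id$, which is legitimate in the paper's context.
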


\begin{rem} 
 The construction of $\bf{St}$ specializes as follows. Given an $\sO$-pseudo\-algebra  $(\sX,\tha,\varphi)$ in 
 $\bf{Cat}(\sV_*)$, thought of as a normal $\bO$-pseudoalgebra, the strictification $\bf{St} \sX$ is the $\sV_*$-category in the factorization
 \[\xymatrix{
\bO \sX\ar[r]^{e_\tha} & \mathbf{St} \sX \ar[r]^{m_\theta} & \sX.
}\]
Using the explicit construction of the factorization in \autoref{enfact1}, we see that 
\[\bf{Ob}(\bf{St}\sX)=\bf{Ob}(\bO \sX)\] 
and $\bf{Mor}(\bf{St}\sX)$ is constructed as the pullback
\[ \xymatrix{
 \bf{Mor}(\bf{St} \sX) \ar[r]^-{m_\tha} \ar[d]_{(T,S)} & \bf{Mor}\sX \ar[d]^{(T,S)}\\
 \bf{Ob}(\bO\sX)\times \bf{Ob}(\bO\sX) \ar[r]_-{\tha\times \tha} & \bf{Ob}\sX\times \bf{Ob}\sX. \\} \]
For example, if $\sV=\bf{Set}$ and $\sO=\sP$, the based category $\bf{St}\sX$ has objects given by 
 $n$-tuples of objects in $\sX$, restricting to non-base objects if $n>1$. A morphism $(x_1,\dots,x_n) \rtarr (y_1,\dots,y_m)$ is given by a morphism $\tha(x_1,\dots,x_n) \rtarr \tha (y_1,\dots,y_m)$ in $\sX$. 
 
 If instead we consider the strictification when considering $\sX$ as an $\bO_+$-pseudo\-algebra, we obtain a category whose set of objects is the free associative monoid on $\bf{Ob}\, \sX$, i.e., the objects are $n$-tuples of objects in $\sX$, and morphisms are defined similarly. This latter case recovers the classical strictification due to Isbell \cite{Is}.  \end{rem}

\begin{rem}
For a strict $\bT$-algebra $X$, the strict $\bT$-map $m_{\tha}\colon \mathbf{St}\bJ X\rtarr X$ specifies the component at $X$ of the counit $\epz$ of the adjunction claimed in \autoref{Lack}.
\end{rem}

We next define $\bf{St}$ on $1$-cells.  Using generic notation for structure maps, let
$$(f,\ze)\colon (X,\tha,\varphi,\upsilon)\rtarr (Y,\tha,\varphi,\upsilon)$$ 
be a $\bT$-pseudomorphism.  Applying \autoref{EFS}(ii) to $\ze^{-1}\colon f\com \tha\Longrightarrow \tha \com Tf$, we obtain a diagram 
\begin{equation}\label{Sttwo} 
 \xymatrix{
\bT X \ar[d]_-{\bT f} \ar[r]^{e_{\tha}} & \mathbf{St} X \ar[r]^{m_{\tha}} \ar[d]_(.45){\mathbf{St}{f}} \drtwocell<\omit>{  \xi } & X \ar[d]^{f}   \\
\bT Z \ar[r]_-{e_\tha} &  \mathbf{St}Y \ar[r]_{m_\tha} & Y
 } 
\end{equation}
in which $\mathbf{St} f\com e_{\tha} = e_{\tha} \com Tf$ and $\xi\com e_{\tha} = \ze^{-1}$. 

\begin{lem}\label{strict2} $\bf{St} f$ is a strict $\bT$-morphism for any $\bT$-pseudomorphism $(f,\ze)$.
\end{lem}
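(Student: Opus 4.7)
The goal is to prove the equality of $\sV$-functors
\[\mathbf{St}\tha \circ \bT\mathbf{St} f \;=\; \mathbf{St} f \circ \mathbf{St}\tha \colon \bT\mathbf{St} X \to \mathbf{St} Y.\]
The strategy is to invoke the uniqueness clause of the lifting property in \autoref{EFS}(ii) applied to the factorization pair $(\bT e_\tha,\,m_\tha)$: we have $\bT e_\tha \in \mathcal{E}$ because $\bT$ preserves $\mathcal{E}$ by hypothesis, and $m_\tha \in \sM$ by construction.

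First, both composites agree after whiskering by $\bT e_\tha$. Using the defining identities $\mathbf{St}\tha \circ \bT e_\tha = e_\tha \circ \mu$ from (\ref{Stone}) and $\mathbf{St} f \circ e_\tha = e_\tha \circ \bT f$ from (\ref{Sttwo}), together with the naturality of $\mu$, both composites reduce to $e_\tha \circ \bT f \circ \mu$. Next, I would consider the lifting problem of \autoref{EFS}(ii) with
\[ e := \bT e_\tha,\quad m := m_\tha,\quad v := e_\tha \circ \bT f \circ \mu,\quad u := \tha \circ \bT m_\tha \circ \bT\mathbf{St} f, \]
and invertible filler $\varphi \circ \bT\bT f$, where $\varphi$ is the pseudoalgebra $2$-cell of the target. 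A routine check confirms $u \circ \bT e_\tha = \tha \circ \bT\tha \circ \bT\bT f$ and $m_\tha \circ v = \tha \circ \mu \circ \bT\bT f$, and that these are linked by $\varphi \circ \bT\bT f$.

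I would then exhibit $w_1 := \mathbf{St}\tha \circ \bT\mathbf{St} f$ as a solution of this lifting problem with $2$-cell $\widetilde\varphi \circ \bT\mathbf{St} f$; its whiskering by $\bT e_\tha$ collapses to $\varphi \circ \bT\bT f$ via $\widetilde\varphi \circ \bT e_\tha = \varphi$. For $w_2 := \mathbf{St} f \circ \mathbf{St}\tha$, the required $2$-cell is the pasting of $\tha \circ \bT\xi^{-1}$, $\ze \circ \bT m_\tha$, $f \circ \widetilde\varphi$, and $\xi \circ \mathbf{St}\tha$, yielding an invertible $2$-cell $u \Rightarrow m_\tha \circ w_2$.

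The main obstacle is verifying that whiskering this second pasting by $\bT e_\tha$ also yields $\varphi \circ \bT\bT f$. Applying $\xi \circ e_\tha = \ze^{-1}$, $\widetilde\varphi \circ \bT e_\tha = \varphi$, and $\mathbf{St}\tha \circ \bT e_\tha = e_\tha \circ \mu$ to each of the four pieces, the whiskered pasting simplifies to $(\ze^{-1} \circ \mu) \cdot (f \circ \varphi) \cdot (\ze \circ \bT\tha) \cdot (\tha \circ \bT\ze)$; the pseudomorphism coherence axiom of \autoref{Tpseudo2} rewrites the last three factors as $(\ze \circ \mu) \cdot (\varphi \circ \bT\bT f)$, and cancellation of $(\ze^{-1} \circ \mu) \cdot (\ze \circ \mu) = \id$ leaves $\varphi \circ \bT\bT f$. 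Uniqueness in \autoref{EFS}(ii) then forces $w_1 = w_2$, as required.
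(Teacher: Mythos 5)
Your proof is correct and takes essentially the same route as the paper: the paper's proof likewise whiskers everything by $\bT e_{\tha}$, invokes the coherence axiom for $\bT$-pseudomorphisms from \autoref{Tpseudo2} (together with $\mathbf{St}\tha\com \bT e_{\tha} = e_{\tha}\com\mu$, $\mathbf{St} f\com e_{\tha} = e_{\tha}\com \bT f$, $\widetilde{\varphi}\com \bT e_{\tha}=\varphi$, and $\xi\com e_{\tha}=\ze^{-1}$) to show two pasting diagrams agree, and then deduces $\mathbf{St} f\com \mathbf{St}\tha = \mathbf{St}\tha\com \bT\mathbf{St} f$ from the uniqueness clause of \autoref{EFS}(ii). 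The only cosmetic difference is your choice of anchor $u=\tha\com \bT m_{\tha}\com \bT\mathbf{St} f$ with filler $\varphi\com \bT^2 f$, whereas the paper anchors its common lifting problem at $f\com\tha\com \bT m_{\tha}$; the two setups differ by pre-pasting an invertible $2$-cell, so they are the same argument.
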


For a $\bT$-pseudoalgegbra $X$, define $k \colon X\rtarr \mathbf{St} X$ to be the composite
\[ \xymatrix@1{X \ar[r]^-{\io_X} &  \bT X \ar[r]^-{e_{\tha}} & \mathbf{St} X.\\} \] 
Since $m_{\tha}\com k = \tha\com \io_X$, we have the 
invertible unit $2$-cell $\upsilon\colon \id_X \Longrightarrow m_{\tha}\com k$. By the rigidity assumption of
\autoref{EFS}(iv), there is an invertible 
$2$-cell $\nu\colon k\com m_{\tha} \Longrightarrow \id$. As observed in \autoref{IntEquiv}, 
we may choose $\nu$ so that $(m_{\tha},k, \upsilon,\nu)$ is an adjoint equivalence in $\sK$.

Since $(m_{\tha},\widetilde{\varphi})$ is a $\bT$-pseudomorphism, the last statement of \autoref{LiftBit} shows that we can construct an invertible $2$-cell $\om \colon \mathbf{St}\tha \com \bT k \Longrightarrow k\com \tha$ such that $(k,\om)$ is a $\bT$-pseudomorphism $X\rtarr \bJ\bf{St} X$ and
the adjunction $k\dashv m_{\tha}$ lifts to an adjoint equivalence of $\bT$-pseudoalgebras.  

\begin{rem} For a $\bT$-pseudoalgebra $X$, the $\bT$-pseudomorphism $(k,\om)$ is the component of the unit $\et_X\colon X \rtarr \bJ\bf{St}X$ of the $2$-adjunction claimed in \autoref{Lack}, and we have just verified that it is an adjoint equivalence. 
\end{rem}

\begin{rem}  Expanding on \autoref{counitissue}, for a strict $\bT$-algebra $X$, the inverse in $\tpa$ of the strict $\bT$-map $\epz_X$, thought of as the $\bT$-pseudomap $\bJ\epz_X$, is $\et_{\bJ X}\colon \bJ X \rtarr \bJ\bf{St}\bJ X$. Even when $X$ is given as a strict algebra, $\om$ is not necessarily the identity. That is why the counit is only an internal equivalence in $\tpa$, not in $\ets$.
\end{rem}

The following remark about when $\epz$ is an internal equivalence in $\ets$ plays a key role in the categorical literature in general and in some of our applications.  

\begin{rem}\label{flexible}   
With the terminology of Blackwell, Kelly, and Power \cite[\S4]{BKP}, 
a strict $\bT$-algebra $X$ is said to be {\em semi-flexible} if $\epz$ is an equivalence in 
$\ets$ and to be {\em flexible} if $\epz$ is a retraction in $\ets$.   If $X = \mathbf{St} Z$ for a $\bT$-pseudoalgebra $Z$, then $X$ is flexible, as observed in \cite[Remark 4.5]{BKP}.  
Indeed, if $\et_Z\colon Z\rtarr \bJ\mathbf{St} Z$ is the unit, then $\mathbf{St} \eta_Z\colon \mathbf{St} Z \rtarr \mathbf{St} \bJ\mathbf{St} Z$ is an explicit strict map right inverse to $\epz_X$. In general, not all strict $\bT$-algebras are flexible or even semi-flexible, and not all flexible $\bT$-algebras are of the form 
$\mathbf{St} Z$.  Necessary and sufficient conditions for $X$ to be flexible or semi-flexible are given in 
\cite[Theorems 4.4 and 4.7]{BKP}.
\end{rem}

Finally, we define $\bf{St}$ on $2$-cells. Write  $k_X$ for the component of $k$ on $X$. For a $2$-cell 
$\si \colon (f,\ze) \Rightarrow (f',\ze')$ in $\tpa$, define the $2$-cell  
$\mathbf{St}\si$ to be the composite
\begin{equation}\label{Stthree}
\xymatrix{
& \mathbf{St} X \ar@/^1em/[rrd]^-{\mathbf{St}f}  & \\ 
\mathbf{St}X \ar[r]^{m_\tha}  \ar@{=}[ur] \ar@{=}[dr] \urtwocell<\omit>{<2> \hspace{1em} \nu^{-1}}  \drtwocell<\omit>{<-2>  \nu} 
& X \ar[u]_{k_X} \ar[d]^{k_X} \rtwocell^f_{f'}{<0> \, \si } & Y \ar[r]^{k_Y} & \mathbf{St} Y \\ 
& \mathbf{St} X \ar@/_1em/[rru]_-{\mathbf{St}f'}  & }
\end{equation}

We also comment on the interaction of $\mathbf{St}$ with products.
The product of $\bT$-pseudo\-algebras $(X,\tha)$ and $(Y,\tha')$ is a $\bT$-pseudoalgebra with action $\tha''$ given
by the composite
\begin{equation}\label{prodact}
 \xymatrix@1{   \bT(X\times X') \ar[r]^-{\pi} &  \bT X \times \bT X' \ar[r]^-{\tha\times \tha'}  & X\times X',\\} 
\end{equation}
where the components of $\pi$ are obtained by applying $\bT$ to the evident projections. 
Application of \autoref{EFSComp} to the composite (\ref{prodact}) gives the following addendum to \autoref{Lack}.  

\begin{cor}\label{Angelica}  If products of $1$-cells in $\sM$ are in $\sM$,
then there is a natural $1$-cell $\ga$ making the following diagram commute.
\[\xymatrix{  \bT(X\times X')  \ar[r]^-{\pi} \ar[d]_{e_{\tha''}} & \bT X \times \bT X'  \ar[r]^-{e_{\tha}\times e_{\tha'}} 
& \mathbf{St}X  \times \mathbf{St}X' \ar[d]^{m_{\tha}\times m_{\tha'}} \\
\mathbf{St}(X \times X')  \ar[rru]^{\ga} \ar[rr]_-{m_{\tha''}} &  & X\times Y \\} \]
\end{cor}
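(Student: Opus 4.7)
The plan is to deduce this from axiom (ii) of the EFS applied directly to the two factorizations of $\tha''$. By construction,
\[ \tha'' = (\tha\times \tha')\com \pi = (m_{\tha}\times m_{\tha'})\com (e_{\tha}\times e_{\tha'})\com \pi, \]
which is one factorization, and the canonical factorization is $\tha'' = m_{\tha''}\com e_{\tha''}$. Under the hypothesis that products of $\sM$-maps lie in $\sM$, the $1$-cell $m_{\tha}\times m_{\tha'}$ is in $\sM$, so we have a strictly commuting square
\[
\xymatrix{
\bT(X\times X') \ar[r]^-{(e_{\tha}\times e_{\tha'})\com \pi} \ar[d]_-{e_{\tha''}} & \mathbf{St}X\times \mathbf{St}X' \ar[d]^{m_{\tha}\times m_{\tha'}}\\
\mathbf{St}(X\times X') \ar[r]_-{m_{\tha''}} & X\times X'
}
\]
with left leg in $\cE$ and right leg in $\sM$. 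This is exactly the setting of \autoref{EFS}(ii) (with identity $2$-cell $\varphi$), which produces a unique $1$-cell $\ga\colon \mathbf{St}(X\times X')\rtarr \mathbf{St}X\times \mathbf{St}X'$ satisfying both $\ga\com e_{\tha''} = (e_{\tha}\times e_{\tha'})\com \pi$ and $(m_{\tha}\times m_{\tha'})\com \ga = m_{\tha''}$, with the associated invertible $2$-cell forced to be the identity. Equivalently, one may apply \autoref{EFSComp} to the composite $\tha'' = g\com f$ with $f = (e_\tha\times e_{\tha'})\com \pi$ and $g = m_\tha\times m_{\tha'}$: since $g\in \sM$ the factorization of $g$ collapses to $e_g = \id$, $m_g = g$, and the composition $1$-cell $c$ of \autoref{EFSComp} is precisely $\ga$.

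For naturality in $(X,X')$, one must show that for strict $\bT$-maps $h\colon X\rtarr Y$ and $h'\colon X'\rtarr Y'$ the square
\[
\xymatrix{
\mathbf{St}(X\times X') \ar[r]^-{\ga_{X,X'}} \ar[d]_-{\mathbf{St}(h\times h')} & \mathbf{St}X\times \mathbf{St}X' \ar[d]^{\mathbf{St}h\times \mathbf{St}h'}\\
\mathbf{St}(Y\times Y') \ar[r]_-{\ga_{Y,Y'}} & \mathbf{St}Y\times \mathbf{St}Y'
}
\]
commutes. The two composites around this square are $1$-cells $\mathbf{St}(X\times X')\rtarr \mathbf{St}Y\times \mathbf{St}Y'$, and both can be checked, using the defining equations for $\ga$ and for $\mathbf{St}(h\times h')$, $\mathbf{St}h$, $\mathbf{St}h'$ from \eqref{Sttwo}, to satisfy the two identities that uniquely determine a lift in the EFS square associated to $\tha''_{Y,Y'}\com \bT(h\times h')$. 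Uniqueness in \autoref{EFS}(ii) then forces the two composites to agree. The same argument, using instead the $2$-cell clause \autoref{EFS}(iii) and the definition of $\mathbf{St}$ on $\bT$-pseudomorphisms and $2$-cells, extends naturality to the pseudo level.

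The proof is essentially a bookkeeping invocation of the EFS axioms; the only substantive ingredient is the product-preservation hypothesis, which is used exactly once, to place $m_\tha \times m_{\tha'}$ into $\sM$. I do not anticipate any genuine obstacle: the work is confined to verifying that the two factorizations of $\tha''$ line up as the input to \autoref{EFS}(ii), after which $\ga$ and its naturality drop out of the uniqueness clauses (ii) and (iii).
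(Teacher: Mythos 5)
Your proof is correct and follows essentially the same route as the paper, which deduces the corollary by applying \autoref{EFSComp} (itself an instance of \autoref{EFS}(ii)) to the composite $(\tha\times\tha')\com\pi$, using the hypothesis exactly once to place $m_\tha\times m_{\tha'}$ in $\sM$. Your uniqueness-based verification of naturality is a correct filling-in of a detail the paper leaves implicit.
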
 

\begin{rem} We shall not elaborate the details,
but the $2$-category of $\bT$-pseudo\-algebras is symmetric monoidal under $\times$, with unit the trivial object $\ast$, and
\autoref{Angelica} implies that $\mathbf{St}$ is an op-lax symmetric monoidal functor to the $2$-category of strict $\bT$-algebras.
\end{rem}

\begin{rem}  If, further, the $1$-cells in $\sM$ are monomorphisms, then the map $\ga$ of \autoref{Angelica} is an isomorphism.
Indeed, the map $\io\colon X\times Y \rtarr \bT(X\times X')$ satisfies $\tha''\com \io = \id$, hence \autoref{EFSCompM} applies.
\end{rem}

\subsection{The proof of the strictification theorem}\label{PL3}
We first prove the lemmas stated in the previous section and then give a shortcut
to the rest of the proof of  \autoref{Lack}.

\begin{proof}[Proof of \autoref{strict}] Since $\mu\com \io = \id$,   the uniqueness in \autoref{EFS}(ii)  implies that the $2$-cell composition
\[ \xymatrix{
\bT X \ar[rr]^-{e_\tha} \ar[d]_{\io} & &  \mathbf{St} X \ar[d]_{\io} \ar[dr]^{m_{\tha}} \\
\bT\bT X \ar[dr]_-{\mu} \ar[rr]^{\bT e_{\tha}} & & \bT \mathbf{St} X \ar[dr]^{\bT m_{\tha}} \ar[dd]_{\mathbf{St} \tha}   & X \ar@{=}[dr] \ar[d]_{\io} \ddrtwocell<\omit>{  \upsilon} \\
&  \bT X \ar[dr]_-{e_{\tha}} & \drtwocell<\omit>{<-2>  \widetilde{\varphi}}  & \bT X \ar[dr]_{\tha}  &  X \ar@{=}[d] \\
& & \mathbf{St} X \ar[rr]_{m_{\tha}} & &  X \\
}\]
is equal to the identity $2$-cell of $\bT X \xrightarrow{e_\tha} \mathbf{St} X \xrightarrow{m_\tha} X$.
Thus $\mathbf{St} \tha\com \io$ is the identity $1$-cell and the composite 
\[ \xymatrix@1{m_{\tha} \ar@{=>}[r]^-{\upsilon\com m_{\tha}} & 
\tha\com \io\com m_{\tha} = \tha\com \bT m_{\tha} \com \io \ar@{=>}[r]^-{\widetilde{\varphi}\com \io} & m_{\tha}\com \mathbf{St}\tha\com \io = m_{\tha}\\} \]
is the identity $2$-cell.

Similarly, the equality of pasting diagrams in \autoref{Tpseudo} and the uniqueness  in \autoref{EFS}(ii) imply that the $2$-cell composition

\[ \xymatrix{
\bT^3 X \ar[rr]^-{\bT^2 e_\tha} \ar[d]_{\mu} & &  \bT^2 \mathbf{St} X \ar[d]_{\mu} \ar[dr]^{\bT^2 m_{\tha}} \\
\bT^2 X \ar[dr]_-{\mu} \ar[rr]^{\bT e_{\tha}} & & \bT \mathbf{St} X \ar[dr]^{\bT m_{\tha}} \ar[dd]_{\mathbf{St} \tha}   & \bT^2 X \ar[dr]^{\bT \tha} \ar[d]_{\mu} \ddrtwocell<\omit>{  \varphi} \\
&  \bT X \ar[dr]_-{e_{\tha}} & \drtwocell<\omit>{<-2>  \widetilde{\varphi}}  & \bT X \ar[dr]_{\tha}  &  \bT X \ar[d]^\tha \\
& & \mathbf{St} X \ar[rr]_{m_{\tha}} & &  X \\
}\]
is equal to the $2$-cell composition
\[ \xymatrix{
\bT^3 X \ar[rr]^-{\bT^2 e_\tha} \ar[d]_{\bT\mu} & &  \bT^2 \mathbf{St} X \ar[d]_{\bT \mathbf{St} \tha} \ar[dr]^{\bT^2 m_{\tha}} \ddrtwocell<\omit>{ \quad \bT\widetilde{\varphi}}  \\
\bT^2 X \ar[dr]_-{\mu} \ar[rr]^{\bT e_{\tha}} & & \bT \mathbf{St} X \ar[dr]_{\bT m_{\tha}} \ar[dd]_{\mathbf{St} \tha}   & \bT^2 X \ar[dr]^{\bT \tha}   \\
&  \bT X \ar[dr]_-{e_{\tha}} & \drtwocell<\omit>{<-2>  \widetilde{\varphi}}  & \bT X \ar[dr]_{\tha} \ar@{=}[r]  &  \bT X \ar[d]^\tha \\
& & \mathbf{St} X \ar[rr]_{m_{\tha}} & &  X \\
}\]
Thus $\mathbf{St}\tha\com \mu = \mathbf{St}\tha\com \bT \mathbf{St} \tha$, so that $\mathbf{St} X$ is a strict $\bT$-algebra, and the implied equalities involving $\widetilde{\varphi}$ 
ensure that $(m_{\tha},\widetilde{\varphi})$ is a $\bT$-pseudomorphism. If $(X,\tha)$ is a strict $\bT$-algebra, then $\varphi$ and $\upsilon$ are identities, hence 
$m_{\tha}\com \mathbf{St} \tha = \tha \com \bT m_{\tha}$ and $\widetilde{\varphi}$ is the identity, showing that
$m_{\tha}\colon (\mathbf{St} X,\mathbf{St}\tha) \rtarr (X,\tha)$ 
is a strict $\bT$-map. 
\end{proof}

\begin{proof}[Proof of \autoref{strict2}] The equality of pasting diagrams 
given in the definition of a $\bT$-pseudomorphism \autoref{Tpseudo2} together with already indicated properties of our construction of $\textbf{St}$  imply that the following compositions of $2$-cells are equal. 
\[ \xymatrix{
&\bT^2 X \ar[r]^-{\bT e_{\tha}} \ar[dr]_-{\mu}  \ar[dl]_-{\bT^2f} &   
 \bT\mathbf{St} X \ar[r]^-{\bT m_{\tha}} \ar[rd]_-{\mathbf{St}\tha}  \drrtwocell<\omit>{\widetilde{\varphi}}& \bT X \ar[dr]^{\tha} &\\
\bT^2 Y\ar[dr]_{\mu} & &  \bT X \ar[r]^-{e_{\tha}} \ar[dl]_-{\bT f} & \mathbf{St} X \dtwocell<\omit>{\xi} \ar[ld]_-{\mathbf{St} f} \ar[r]^-{m_{\tha}} 
 & X \ar[dl]^f\\
& \bT Y \ar[r]_-{e_{\tha}}  & \mathbf{St} Y \ar[r]_-{m_{\tha}}  & Y & } \]
\[ \xymatrix{
& \bT^2 X \ar[dl]_-{\bT^2 f} \ar[r]^-{\bT e_{\tha}} & \bT \mathbf{St} X \ar[dl]_-{\bT\mathbf{St} f} \ar[r]^-{\bT m_{\tha}}  \dtwocell<\omit>{ \, \, \bT\xi}
&    \bT X \ar[rd]^-{\tha} \ar[dl]^-{\bT f} \ddtwocell<\omit>{ \quad \ze^{-1} } & \\
\bT^2 Y \ar[r]^-{\bT e_{\tha}} \ar[dr]_-{\mu} 
&  \bT \mathbf{St} Y \ar[dr]_-{\mathbf{St} \tha} \ar[r]^-{\bT m_{\tha}} \drrtwocell<\omit>{\widetilde{\varphi}}
& \bT Y \ar[dr]_-{\tha}  
&   & X \ar[dl]^-{f}   \\
& \bT Y \ar[r]_-{e_{\tha}} & \mathbf{St} Y \ar[r]_-{m_{\tha}}  & Y & } \]
This implies that $\mathbf{St} f\com \mathbf{St} \tha = \mathbf{St}\tha \com \bT\mathbf{St} f$, so that $\mathbf{St} f$ is a strict $\bT$-map.
\end{proof}

From here, diagram chases can be used to complete the proof of \autoref{Lack}.  For example, these show that $\mathbf{St}$ respects composition and identities at the level of 1-cells, so that we have a functor of the underlying categories, and that the triangle identities for the $2$-adjunction hold. The following categorical observation can be used to cut down substantially on the number of verifications required. It is a variant of \cite[Proposition 4.3.4]{Emily} in the enriched setting.

\begin{lem}\label{cute}
Let $\bJ\colon \sC \rightarrow \sD$ be 2-functor between $2$-categories. Suppose there exists a function on objects 
$F\colon \bf{Ob}\sD \rtarr \bf{Ob}\sC$ and for each object $d\in \sD$ a $1$-cell $\eta _d \colon d \rtarr \bJ Fd$ in $\sD$ 
such that for each object $c \in \sC$, applying $\bJ$ followed by precomposition with $\eta_d$ induces an isomorphism of categories
\[\nu\colon \sC(Fd,c) \rtarr \sD(d,\bJ c).\]
Then $F$ extends to a $2$-functor $F\colon \sD \rtarr \sC$ such that $F$ is left 2-adjoint to $\bJ$, with the 
unit of the adjunction given by $\eta$.
\end{lem}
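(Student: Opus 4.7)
\medskip

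\noindent\textbf{Proof proposal.} The plan is a direct $2$-categorical enhancement of the classical ``object-function-plus-universal-arrows implies adjoint functor'' argument (as in \cite[IV.1]{CatWork}). The hypothesis gives us $F$ on objects and a candidate unit $\eta$; I would use the category isomorphisms $\nu\colon \sC(Fd,c)\to \sD(d,\bJ c)$ and their inverses to read off the action of $F$ on $1$-cells and $2$-cells, then derive every required identity from the uniqueness built into an isomorphism.

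First I would define $F$ on morphisms. For a $1$-cell $f\colon d\rtarr d'$ in $\sD$, the composite $\eta_{d'}\com f$ lies in $\sD(d,\bJ Fd')$, so I set $Ff \:= \nu^{-1}(\eta_{d'}\com f)\in \sC(Fd,Fd')$; equivalently, $Ff$ is the unique $1$-cell with $\bJ(Ff)\com \eta_d = \eta_{d'}\com f$. For a $2$-cell $\al\colon f\Rightarrow g$, the whiskering $\eta_{d'}\com \al$ is a $2$-cell in the category $\sD(d,\bJ Fd')$, and since $\nu$ is an \emph{iso of categories}, not just a bijection on objects, I can define $F\al \:= \nu^{-1}(\eta_{d'}\com \al)$. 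Functoriality of $F$ on each hom-category $\sD(d,d')\rtarr \sC(Fd,Fd')$ is then just the statement that $\nu^{-1}$ is a functor. Preservation of vertical composition of $1$-cells (i.e.\ $F(g\com f)=Fg\com Ff$) and of identities follows from uniqueness in the bijection applied to the computations
\[ \bJ(Fg\com Ff)\com \eta_d = \bJ Fg\com \eta_{d'}\com f = \eta_{d''}\com g\com f, \qquad \bJ(\id_{Fd})\com \eta_d = \eta_d = \eta_d \com \id_d. \]

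Next I would verify that $F$ is a $2$-functor, the only non-routine point being preservation of horizontal composition of $2$-cells; this reduces, via vertical decomposition into a whiskering and a second whiskering, to checking compatibility of $\nu^{-1}$ with the two types of whiskering. Whiskering on the $\sC$-side ($F$-source) is handled by the already established functoriality of $F$ on $1$-cells; whiskering on the $\sD$-side ($F$-target) is handled by the $2$-functoriality of $\bJ$ combined with the defining equation of $Ff$. Finally, to produce the $2$-adjunction I would take $\nu$ itself as the adjunction isomorphism. Naturality in $c$ is immediate because post-composition in $\sC$ by $h\colon c\rtarr c'$ is transported by $\nu$ to post-composition by $\bJ h$, which is just the definition of $\nu$. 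Naturality in $d$ is exactly the defining property of $Ff$, which says precomposition with $f$ corresponds under $\nu$ to precomposition with $Ff$ after applying $\bJ$. Enriched (i.e.\ $2$-categorical) naturality is automatic because $\nu$ is an iso of hom-categories. The unit of the resulting adjunction is $\eta$ by construction, since $\eta_d = \nu(\id_{Fd})$.

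The main potential obstacle is the bookkeeping for horizontal composition of $2$-cells when checking that $F$ is a $2$-functor; everything else is forced by uniqueness in the hom-category isomorphism. I would organize that verification by expressing horizontal composition as a pair of whiskerings and then invoking the two whiskering compatibilities separately, so that no direct manipulation of unfactored horizontal composites is needed.
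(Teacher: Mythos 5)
Your proposal is correct, and your construction of $F$ on $1$- and $2$-cells is exactly the paper's: the paper defines $F$ on hom-categories as the composite $\sD(d,d') \xrightarrow{\eta_{d'}\circ -} \sD(d,\bJ Fd') \xrightarrow{\nu^{-1}} \sC(Fd,Fd')$, which is your $Ff=\nu^{-1}(\eta_{d'}\circ f)$ and $F\al=\nu^{-1}(\eta_{d'}\circ\al)$, and your uniqueness arguments for $F(g\circ f)=Fg\circ Ff$, identities, and whiskerings are the details behind the paper's terse ``follows formally from the definition.'' The one genuine divergence is how the $2$-adjunction is certified. You verify $2$-naturality of the hom-category isomorphisms $\nu$ in both variables and take that as the definition of a $\mathbf{Cat}$-enriched adjunction; the paper instead extracts the counit explicitly, defining $\epz_c\colon F\bJ c\rtarr c$ as the unique $1$-cell with $\bJ\epz_c\circ\eta_{\bJ c}=\id_{\bJ c}$ (in your notation, $\epz_c=\nu^{-1}(\id_{\bJ c})$), observes one triangle identity is immediate, and deduces the other triangle identity and the $2$-naturality of $\epz$ from uniqueness. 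The two packagings are equivalent, but the paper's has a practical payoff in context: the explicit counit is used downstream (it is identified with $m_{\tha}$, and the discussion of when the counit is an internal equivalence, e.g.\ for flexible algebras, refers to it directly), so if you adopt your version you should still record $\epz_c=\nu^{-1}(\id_{\bJ c})$. One small caution: your remark that enriched naturality is ``automatic'' because $\nu$ is an isomorphism of categories is slightly glib --- being a levelwise isomorphism does not by itself give $2$-naturality; you must check the naturality squares on $2$-cells as well as $1$-cells, though these checks are the same whiskering computations you already performed (e.g.\ $\bJ(F\al)\circ\eta_d=\eta_{d'}\circ\al$ by definition of $F\al$), so nothing is actually missing.
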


\begin{proof} 
We define $F$ on $1$- and $2$-cells by the composite 
\[ \sD(d,d') \xrightarrow{\eta_{d'}\circ-} \sD(d,\bJ Fd') \xrightarrow{\nu^{-1}} \sC(Fd,Fd').\]
That $F$ is a $2$-functor such that $\eta$ is a $2$-natural transformation from the identity to $F$ follows formally from the 
definition.  For an object $c$ of $\sC$, the component at $c$ of the counit $\epz$ of the adjunction is the unique $1$-cell
$\epz_c\colon F\bJ c\rtarr c$ such that $\bJ \epz_c \com \eta_{\bJ c}$ is the identity of $\bJ c$. One triangle identity is obvious from the definition. The $2$-naturality of $\epz$ and the other triangle identity
follow from the uniqueness. 
\end{proof}

We apply this result to the inclusion $\bJ\colon \ets \rtarr \tpa$ and the construction of $\bf{St}$ on objects
given by \autoref{Stone} and \autoref{strict}. We must check that its hypothesis holds.  For a 
$\bT$-pseudoalgebra $X$, we take $\et_X = (k,\om)\colon X \rtarr \bJ\bf{St}X$.  For a $\bT$-pseudomorphism $(f,\ze)\colon (X,\tha,\varphi,\upsilon)\rtarr (Z,\tha)$, where $(Z,\tha)$ is a strict $\bT$-algebra, we define
$\widetilde f\colon \mathbf{St} X \rtarr Z$ to be the composite strict $\bT$-map
\[ \xymatrix@1{ \mathbf{St} X \ar[r]^-{\mathbf{St} f} & \mathbf{St} Z \ar[r]^-{m_{\tha}} & Z. \\} \]
It is straightforward to check that this map is the same as the one obtained by applying \autoref{EFS}(ii) to $\ze^{-1} \colon  f\com \tha \Longrightarrow \tha \com \bT f$: 
\[ \xymatrix{
\bT X \ar[d]_-{\bT f} \ar[r]^{e_{\tha}} & \mathbf{St} X \ar[r]^{m_{\tha}} \ar[d]_{\widetilde{f}} \drtwocell<\omit>{   } & X \ar[d]^{f}   \\
\bT Z \ar[r]_-{\tha} &  Z \ar@{=}[r] & Z \\
 } \]
Using this description, and using arguments similar to those in our proofs above, we can prove that $\widetilde{f}$ is the unique strict map such that $(\widetilde{f},\id)\com (k,\om) = (f,\ze)$. This gives the bijection of $1$-cells required for the isomorphism of categories 
\[ \ets(\mathbf{St} X,Z)\iso \tpa(X,Z)  \]
assumed in \autoref{cute}. The bijection at the level of $2$-cells follows from the fact that $(k,m_{\tha})$ is an internal adjoint equivalence. We can thus apply \autoref{cute} to finish the proof
of \autoref{Lack}.  \autoref{cute}  avoids the need to define $\mathbf{St}$ explicitly on 2-cells, to check that $\mathbf{St}$ is indeed a $2$-functor, and to check the $2$-naturality of $m_{\tha}$ and $(k,\om)$.  That is all given automatically.

Finally, we observed in \autoref{prod2} that the classes $\sB\sO$ and $\sF\sF$ are closed under products, so that $(\sB\sO,\sF\sF)$ is product-preserving.

\section{Appendix: strongly concrete categories}\label{AppRubin}

Recall the functor $\bV\colon \bf{Set} \rtarr \sV$ from \autoref{bUbV}. We prove here that it 
preserves finite limits under mild hypotheses that are satisfied in our examples. We must assume that $\sV$ has coproducts in addition to finite limits, and we assume further that the functors $V\times -$ and $-\times V$ preserve coproducts.  This is automatic if $\sV$ is cartesian closed, since these functors are then left adjoints.

\begin{lem}  The functor $\bV$ preserves finite products. 
\end{lem}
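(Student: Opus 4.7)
The plan is to verify the two conditions that together amount to preservation of finite products: preservation of the terminal object, and preservation of binary products. The terminal object case is immediate, since $\bV(\ast_{\bf{Set}}) = \coprod_{s \in \{*\}} \ast = \ast$, because a coproduct indexed by a singleton is canonically isomorphic to the single summand.

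For binary products, given sets $S$ and $T$, I would compute
\[ \bV(S) \times \bV(T) = \Bigl(\coprod_{s \in S} \ast\Bigr) \times \Bigl(\coprod_{t \in T} \ast\Bigr) \]
and move the product inside the coproducts in two steps. First, since $-\times \bV(T)$ preserves coproducts by hypothesis, this is isomorphic to $\coprod_{s \in S}\bigl(\ast \times \bV(T)\bigr)$. Then, using that $\ast \times -$ also preserves coproducts (together with the natural isomorphism $\ast \times V \iso V$ coming from $\ast$ being terminal), each summand $\ast \times \bV(T)$ is isomorphic to $\coprod_{t \in T}(\ast \times \ast) \iso \coprod_{t \in T}\ast$. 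Reindexing, the double coproduct becomes $\coprod_{(s,t) \in S \times T} \ast = \bV(S \times T)$.

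To be careful, I would check that the isomorphism $\bV(S) \times \bV(T) \iso \bV(S \times T)$ constructed this way is the canonical comparison map from the universal property, i.e., the map induced by $\bV$ applied to the two projections. This is straightforward to check by composing with the coproduct insertions $\ast \rtarr \bV(S)$ and $\ast \rtarr \bV(T)$ corresponding to elements $s$ and $t$, and tracing the definitions.

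There is no real obstacle here; the argument is just a calculation using the distributivity hypothesis. The only thing to be slightly mindful of is keeping the natural isomorphisms $\ast \times V \iso V \iso V \times \ast$ in order, and verifying that the resulting isomorphism coincides with the canonical comparison map rather than being merely an abstract isomorphism of objects.
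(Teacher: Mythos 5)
Your proof is correct and takes essentially the same approach as the paper: the terminal object case is handled by definition, and binary products are computed by distributing the product over the two coproducts using the hypothesis that $V\times -$ and $-\times V$ preserve coproducts. Your additional check that the resulting isomorphism agrees with the canonical comparison map is a point of care the paper leaves implicit, but it does not change the argument.
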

\begin{proof}  By definition, $\bV$ preserves $0$-fold products (terminal objects $\ast$),
and any functor preserves $1$-fold products, so it suffices to check that $\bV$ preserves
binary products. By our added hypothesis
\begin{eqnarray*} 
\bV S\times \bV T & = & \bigg(\coprod_{s\in S}\ast\bigg) \times \bV T  \iso  \coprod_{s\in S}(\ast\times \bV T)\\
&  = & \coprod_{s\in S} \bigg(\ast\times \coprod_{t\in T}\ast\bigg) \iso \coprod_{(s,t)} \ast\times\ast \iso  \bV(S\times T). \ \ \  \qedhere
\end{eqnarray*}
\end{proof}

Therefore $\bV$ preserves finite limits if it preserves equalizers. The following helpful definition
and proposition are due to Jonathan Rubin.\footnote{Private communication.}
Note that $\bV\emptyset$ is an empty coproduct and thus an initial object $\emptyset\in \sV$.

\begin{defn}\label{Conc} The category $\sV$ is {\em strongly concrete} if there is an 
underlying set functor $\bS\colon \sV\rtarr \mathbf{Set}$
with the following properties.
\begin{enumerate}[(i)]
\item There is a natural isomorphism $\Id\iso \bS\com \bV$.
\item The functor $\bS$ is faithful.
\item $\bS X = \emptyset$ if and only if $X = \emptyset$.
\end{enumerate}
Property (ii) says that $\sV$ is concrete in the usual sense.
\end{defn}

In many examples, we can take $\bS$ to be the right adjoint $\bU$ of $\bV$, and then (i) holds when the unit of the adjunction is an isomorphism (see \autoref{set}). However, this does not work in the equivariant context of most interest to us. 

\begin{exmp} Let $\sV = G\sU$.  For a set $S$, $\bV S$ is the the discrete space $S$ with trivial 
$G$-action.  The right adjoint $\bU$ of $\bV$ takes a $G$-space $X$ to the underlying set of $X^G$, and hence, $\bU$ does not satisfy (ii) and (iii) of \autoref{Conc}.
However, ignoring equivariance and taking $\bS X$ to instead be the 
underlying set of $X$, we see that $\bS$ satisfies all three conditions, so that $G\sU$ is strongly concrete.
\end{exmp}

\begin{prop} If $\sV$ is strongly concrete, then $\bV$ preserves equalizers and therefore all finite limits.
\end{prop}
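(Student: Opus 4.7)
The goal is to show $\bV$ preserves equalizers; once this is established, preservation of all finite limits follows from the preceding lemma (preservation of finite products) by the standard reduction of finite limits to products and equalizers. So let $f,g\colon S\rtarr T$ be parallel maps in $\mathbf{Set}$ with equalizer $i\colon E\hookrightarrow S$, let $j\colon E'\rtarr \bV S$ denote the equalizer of $\bV f,\bV g$ in $\sV$, and let $h\colon \bV E\rtarr E'$ be the canonical comparison. I would set $F := S\setminus E$, so that $S=E\coprod F$ in $\mathbf{Set}$; since $\bV$ preserves coproducts by definition, $\bV S=\bV E\coprod \bV F$ with $\bV i$ as the first coproduct inclusion. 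In particular $\bV i$ is a split monomorphism, so $h$ is automatically monic.

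The key input from strong concreteness is the following disjointness principle: if a pair of maps $u,v\colon X\rightrightarrows \bV T$ in $\sV$ has the property that the set maps $\bS u,\bS v\colon \bS X\rightrightarrows T$ agree nowhere (their set-theoretic equalizer is empty), then the equalizer of $u$ and $v$ in $\sV$ is the initial object $\emptyset$. Indeed, if $Y\to X$ equalizes $u$ and $v$, then applying the functor $\bS$ and using the natural isomorphism $\bS\bV T\iso T$ (property (i)) gives a map $\bS Y\rtarr \bS X$ whose composites with $\bS u,\bS v$ coincide, so $\bS Y$ factors through the empty set-theoretic equalizer; by property (iii), $Y=\emptyset$. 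Property (ii), the faithfulness of $\bS$, guarantees that this detection mechanism is nontrivial: distinct elements $t_1\neq t_2$ in $T$ really do produce distinct morphisms $\iota_{t_1},\iota_{t_2}\colon \ast\rtarr \bV T$ in $\sV$.

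Applying this principle with $X=\bV F$, $u=\bV f|_{\bV F}$, $v=\bV g|_{\bV F}$, we see that any $Y\rtarr \bV F$ equalizing the restrictions is $\emptyset$, since $f(s)\neq g(s)$ for every $s\in F$. Pulling back $j\colon E'\rtarr \bV S$ along the inclusion $\bV F\hookrightarrow \bV S$ produces such a $Y$, so this pullback is initial. I would then invoke extensivity of the coproduct $\bV S=\bV E\coprod\bV F$: the disjointness of the two summands is precisely another instance of the disjointness principle above (applied to the two summand inclusions), and the stability under pullback is the standing distributivity assumption that $V\times{-}$ preserves coproducts, applied in the context of graphs of maps into $\bV S$. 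Together, these force $j$ to factor through $\bV i$, producing the inverse to $h$ and establishing $\bV E\iso E'$. The main obstacle is organizing this final step cleanly from the stated hypotheses: the disjointness half comes directly from strong concreteness, but the stability half relies on extracting stability of coproducts under pullback from the distributivity assumption, which holds transparently in all the intended examples ($\sU$, $G\sU$, $\mathbf{Cat}$, $G\mathbf{Cat}$) but deserves explicit verification in the abstract setting.
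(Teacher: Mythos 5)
Your ``disjointness principle'' is sound, and it is in fact essentially how the paper handles the degenerate case: a cone equalizing two maps whose underlying set maps agree nowhere must be $\emptyset$, by properties (i) and (iii) of strong concreteness. The genuine gap is exactly the step you flagged at the end: from $E'\times_{\bV S}\bV F=\emptyset$ you conclude that $j$ factors through $\bV i$. That is an \emph{extensivity} statement about the coproduct $\bV S=\bV E\amalg\bV F$, and it does not follow from the standing hypothesis that $V\times{-}$ preserves coproducts. Distributivity gives stability of coproduct decompositions under pullback along product \emph{projections} only, not along arbitrary $1$-cells, and your suggested ``graphs'' maneuver is circular: the graph of $j$ is a map $E'\rtarr E'\times\bV S\iso (E'\times\bV E)\amalg(E'\times\bV F)$ by distributivity, but to decompose $E'$ accordingly you must pull that coproduct decomposition back along the graph --- which is precisely the extensivity you are trying to derive. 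Nor does strong concreteness obviously repair this: $\bS$ is faithful but not full, so the set-level factorization of $\bS j$ through $E$ does not lift to a morphism in $\sV$. (A smaller slip: your claim that $\bV i$ is a split monomorphism fails when $E=\emptyset$, since a coproduct injection $A\rtarr A\amalg B$ splits only if there is a map $B\rtarr A$; in that case one needs instead that coproduct injections are monic and $\emptyset$ is strict, which does hold in distributive categories but is not what you argued.)

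For contrast, the paper's proof avoids extensivity entirely and works directly with an arbitrary equalizing cone $e\colon X\rtarr\bV S$: when $E\neq\emptyset$, choose in $\mathbf{Set}$ a retraction $r\colon S\rtarr E$ of $i$ and verify, using faithfulness of $\bS$ together with the set-level description of the unique comparison map $d\colon \bS X\rtarr\bS\bV E$, that $\widetilde{e}=\bV r\com e$ is the required unique factorization; when $E=\emptyset$, properties (iii) and (i) force $X=\emptyset$, which is your disjointness principle in its simplest instance. To salvage your structural route you would have to add extensivity of coproducts in $\sV$ as an explicit hypothesis, which would prove a strictly weaker proposition than the one stated.
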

\begin{proof}  Let 
$$ \xymatrix{ E\ar[r]^-{i} & S \ar@<.5ex>[r]^-{f} \ar@<-.5ex>[r]_-{g} & T\\} $$
be an equalizer in $\mathbf{Set}$.  We claim that 
$$ \xymatrix{ \bV E\ar[r]^{\bV i} & \bV S \ar@<.5ex>[r]^-{\bV f} \ar@<-.5ex>[r]_-{\bV g} & \bV T\\} $$
is an equalizer in $\sV$. By \autoref{Conc}(i), the given equalizer is isomorphic to 
$$ \xymatrix{ \bS \bV E\ar[r]^-{\bS\bV i} 
& \bS \bV S \ar@<.5ex>[r]^-{\bS\bV f} \ar@<-.5ex>[r]_-{\bS\bV g} & \bS\bV T\\}, $$
which is thus also an equalizer in $\mathbf{Set}$. 

Let $e\colon X\rtarr \bV S$ be a map in $\sV$ such that $\bV f\com e = \bV g\com e$.   We must show that there 
is a unique map $\widetilde{e}\colon X\rtarr \bV E$ such that $\bV i\com \widetilde e = e$. 
Since 
\[ \bS\bV f \com \bS e = \bS \bV g\com \bS e, \]
there is a unique map of sets 
$d\colon \bS X \rtarr \bS\bV E$
such that $\bS\bV i\com d = \bS e$. We claim that $d = \bS \widetilde e$ for a map $\widetilde e\colon X\rtarr \bV E$. 
Since $\bS$ is faithful and 
\[ \bS(\bV i\com \widetilde e) = \bS\bV i\com d = \bS e,\]
the claim implies both that $\bV i\com \widetilde e = e$ and that $\widetilde e$ is unique, completing the proof. 

Suppose first that $E\neq \emptyset$. Then, since $i$ is an injection, we can choose a map $r\colon S\rtarr E$ such that 
$r\com i = \id$. By inspection of set level equalizers, $d = \bS\bV r\com \bS e$, hence
$d =\bS\widetilde e$ where $\widetilde e = \bV r\com e$.

Finally, suppose $E=\emptyset$. Then $\bV E= \emptyset$ and, by \autoref{Conc}(iii), 
$\bS\bV E=\emptyset$. Thus $d$ is a map to $\emptyset$ and $\bS X = \emptyset$. By
\autoref{Conc}(iii) again, $X= \emptyset$ and we can and must let $\widetilde e$ be the unique map $\emptyset\rtarr \emptyset$.  
\end{proof}

\bibliographystyle{plain}
\bibliography{references}

\end{document}